\newtheorem{theorem}{Theorem}[section]
\newtheorem{lemma}[theorem]{Lemma}
\newtheorem{proposition}[theorem]{Proposition}
\newtheorem{definition}[theorem]{Definition}
\newtheorem{example}[theorem]{Example}
\newtheorem{remark}[theorem]{Remark}
\newtheorem{corollary}[theorem]{Corollary}
\numberwithin{equation}{section}
\def\qed{{\hfill\hbox{\enspace${ \square}$}} \smallskip}
\def\sqr#1#2{{\vcenter{\vbox{\hrule height .#2pt \hbox{\vrule
 width .#2pt height#1pt \kern#1pt \vrule
width .#2pt} \hrule height .#2pt}}}}
\def\square{\mathchoice\sqr54\sqr54\sqr{4.1}3\sqr{3.5}3}
\def\ds{\begin{displaystyle}}
\def\eds{\end{displaystyle}}
\def\<{\langle }
\def\>{\rangle }
\def\R{\mathbb R}
\def\N{\mathbb N}
\title{Bi-Kolmogorov type operators and weighted Rellich's inequalities \footnote{The authors are members of the Gruppo Nazionale per l’Analisi Matematica, la Probabilità e le loro Applicazioni (GNAMPA) of the Istituto Nazionale di Alta Matematica (INdAM).}}
\author[1]{Davide ADDONA\thanks{davide.addona@unipr.it}
} 
\author[2]{Federica GREGORIO\thanks{fgregorio@unisa.it}} 
\author[3]{Abdelaziz RHANDI\thanks{arhandi@unisa.it}} 
\author[2]{Cristian TACELLI\thanks{ctacelli@unisa.it}} 
\affil[1]{Dipartimento di Matematica, Fisica e Informatica, Universit\`a di Parma, Parma, Italy}
\affil[2]{Dipartimento di Matematica, Universit\`a di Salerno, Salerno, Italy} 
 \affil[3]{Dipartimento di Ingegneria dell'Informazione ed Elettrica e Matematica Applicata, Universit\`a di Salerno, Salerno, Italy}
\date{}
\begin{document}

\maketitle

\begin{abstract}
In this paper we consider the symmetric Kolmogorov operator $L=\Delta +\frac{\nabla \mu}{\mu}\cdot \nabla$ on $L^2(\R^N,d\mu)$, where $\mu$ is the density of a probability measure on $\R^N$. Under general conditions on $\mu$
we prove first weighted Rellich's inequalities with optimal constants and deduce that the operators $L$ and $-L^2$ with domain $H^2(\R^N,d\mu)$ and $H^4(\R^N,d\mu)$ respectively, generate analytic semigroups of contractions on $L^2(\R^N,d\mu)$. We observe that $d\mu$ is the unique invariant measure for the semigroup generated by $-L^2$ and as a consequence we describe the asymptotic behaviour of such semigroup and obtain some local positivity properties. As an application we study the bi-Ornstein-Uhlenbeck operator and its semigroup on $L^2(\R^N,d\mu)$.
\end{abstract}

{\bf Mathematics Subject Classification.} 47D03, 35K35, 35A23, 35K65
\medskip

{\bf Keywords.} Higher order elliptic equations, Maximal regularity, Invariant measures, Weighted Rellich's inequalities

\section{Introduction}

The focus of this article is to study some parabolic properties in weighted $L^2$ spaces for a class of forth order operators with unbounded coefficients:   bi-Kolmogorov type operators. We define our operators $(A,D(A))$ as   operators associated to the sesquilinear form
\[a_L(u,v)=\int_{\R^N}Lu\overline{Lv}\,d\mu\qquad u,v\in D(a_L):=D(L),\]
where $L$ is a second order Kolmogorov type operator defined on smooth functions as $Lf=\Delta f+\frac{\nabla\mu}{\mu}\cdot\nabla f$ and $d\mu$ is some probability measure which has density $\mu$ with respect to the Lebesgue measure. Second order Kolmogorov type operators have been widely studied in the literature,  the prototype  being the Ornstein-Ulhenbeck operator corresponding to the choice of   a Gaussian measure $\mu$, see for example \cite{Lunardi, MPRS, lor-16, ber-lor, pru-rha-sch}. The Gaussian measure is the unique invariant measure for the Ornstein-Ulhenbeck semigroup and an explicit formula for the semigroup is available; having such a formula simplifies the study of the main properties
of the semigroup.  It is known that the most appropriate space  to study parabolic properties of the Ornstein-Ulhenbeck semigroup  is the weighted  space  $L^2_\mu(\R^N):=L^2(\R^N,d\mu)$  where it gives rise to  a strongly continuous analytic semigroup (the result holding true in $L^p_\mu(\R^N)$ for any $p\in[1,\infty)$). Roughly speaking, the Ornstein-Ulhenbeck operator in $L^2_\mu(\R^N)$ is the counterpart of the Laplacian operator in $L^2(\R^N)$.

Having defined the bi-Kolmogorov type operator through the form $a_L$, in order to establish generation results in $L^2_\mu(\R^N)$ for $A$, we need generation results for the second order Kolmogorov type operator $L$. In \cite{alb-lor-man} under mild assumptions on $\mu$, the authors prove that the closure of $(L, C_c^\infty(\R^N))$ on $L^2_\mu(\R^N)$ is the generator of an analytic contraction $C_0$-semigroup of angle $\frac{\pi}{2}$ on $L^2_\mu(\R^N)$. In particular they assume 
\textbf{Hypothesis} $(H1)$
\begin{itemize}
\item[(i)]  $\mu\in H^1_{\rm loc}(\R^N)$, $\frac{\nabla\mu}{\mu}\in L^r_{\rm loc}(\R^N,\R^N)$ for some $r>N$,
\item[(ii)] $\inf_{x\in K}\mu(x)>0$ for every compact $K\subset \R^N$. 
\end{itemize}
Therefore, we are able to state that, under Hypothesis $(H1)$, the operator  $(-A,D(A))$ generates an analytic contraction $C_0$-semigroup of angle $\frac{\pi}{2}$ on $L^2_\mu(\R^N)$.

In this paper we also deal with perturbation of the operator $A$ by a singular potential. It is known that there is a strong relation between second order Schr\"odinger type operators and Hardy's inequalities. When one deals with a complete operator, i.e., allowing for a drift term, some generalised Hardy inequality is needed. In \cite{can-gre-rha-tac} suitable conditions on $\mu$ are obtained for the validity of a weighted Hardy inequality of the following form 
\[C_0\int_{\R^N}\frac{|u(x)|^2}{|x|^2}d\mu\leq\int_{\R^N}|\nabla u(x)|^2d\mu+C\int_{\R^N}|u(x)|^2d\mu,\qquad u\in H^1_\mu(\R^N)\]
where $C_0=\left(\frac{N-2}{2}\right)^2$ is the best constant in Hardy's inequality and $C$ is a positive constant. 
As a consequence existence of positive solutions to the  parabolic
problem associated to the perturbed operator $L+V$ with $0\leq V(x)\leq\frac{C_0}{|x|^2}$ is stated. 
The bi-Laplacian operator perturbed by the inverse fourth order potential has been studied in \cite{gre-mil}: in this case a Rellich inequality is needed. We prove here a weighted Rellich inequality with respect to the operator $L$. Indeed, under the assumptions \\
\textbf{Hypothesis} $(H2)$ 
\begin{itemize}
\item[(i)] $\sqrt\mu\in H^1_{\rm loc}(\R^N),\Delta\mu\in L^1_{\rm loc}(\R^N)$,
\item[(ii)] there exists a $R_0>0$ such that $ |x|^2\left(\frac14\left|\frac{\nabla\mu}{\mu}\right|^2-\frac12\frac{\Delta\mu}{\mu}\right)\leq\frac14\frac{1}{|\log|x||^2}\quad\forall\,x\in B_{R_0},$
\item[(iii)] $\frac14\left|\frac{\nabla\mu}{\mu}\right|^2-\frac12\frac{\Delta\mu}{\mu}$ is bounded from above in $\R^N\setminus B_R$ for every $R>0$,
\end{itemize}
we prove that there exists $C>0$ such that the following inequality with optimal constant holds 
\[(C_0-1)^2\int_{\R^N}\frac{|u|^2}{|x|^4}d\mu\leq\int_{\R^N}|L u|^2d\mu+C||u||^2_{H^1_\mu(\R^N)}\]
for any $u\in H^2_\mu(\R^N)$, $N\geq5$.
This inequality allows us to establish generation results for the perturbed operator $-A+V$ when $0\leq V\leq\frac{(C_0-1)^2}{|x|^4}$.

An important feature of the Ornstein-Uhlenbeck semigroup in $L^2_\mu(\R^N)$ (and also in $L^p_\mu(\R^N)$ for  $p\in(1,\infty)$) is that a complete characterization of its   generator is available. In particular, it is known that the domain of the Ornstein-Ulhenbeck operator in $L^2_\mu$ is exactly the weighted Sobolev space $H^2_\mu(\R^N)$, cf. \cite{dap,Lunardi}. The same result holds when $\mu$ takes the form $\mu=e^{-\phi}$ with $\phi$ convex or $\phi\in C^3(\R^N)$ and $\phi$ together with its derivatives up to second order have polynomial growth, cf. \cite{dap-ves}. In this paper we provide more general assumptions that imply that the domain of the Kolmogorov operator $L$ coincides with $H^2_\mu(\R^N)$. In particular, as a consequence of weighted Hardy's and Rellich's inequalities, we derive  some useful estimates   such as a weighted interpolation inequality and a kind of Calderon-Zygmund inequality that allows us to deduce that $D(L)=H^2_\mu(\R^N)$ if $N\geq5$, Hypotheses $(H1)$ and $(H2)$ hold, and additionally \\
\textbf{Hypothesis $(H3)$}
(i) $\mu \in W^{2,1}_{\rm loc}(\R^{N})$  and $\left |D_{i}\left (\frac{D_{j}\mu}{\mu}\right)\right |\leq \frac{\varepsilon}{|x|^{2}}+C\left |\frac{\nabla \mu}{\mu}\right|\,\forall\,i,j=1,\dots,N$.\\
Moreover, through more general higher order weighted Rellich's inequalities, we are also able to characterize the domain of the bi-Kolmogorov operator $A$. We show that,  assuming further $N\geq7$ and\\ \textbf{Hypothesis $(H3)$}
(ii) $\mu \in W^{3,1}_{\rm loc}(\R^{N})$ and $\left |D_{ij}\left (\frac{D_{k}\mu}{\mu}\right)\right |\leq \frac{\varepsilon}{|x|^{3}}+C\left |\frac{\nabla \mu}{\mu}\right|\,\forall\,i,j,k=1,\dots,N$\\
then $D(A)=H^4_\mu(\R^N).$

The interest in higher order operators has grown considerably in recent times due to their applications in many fields of science, for example they are involved in models of elasticity \cite{mel} or condensation in graphene \cite{sed}, free boundary problems \cite{ada} and non-linear elasticity \cite{ant, skr}. Among the best known features of higher order differential equations is the failure of maximum principles, and hence of the positivity preserving property of the semigroup. The heat kernel of the bi-Laplacian operator was studied by Hochberg already in 80s, cf. \cite{hoc}. Together with a stocastic interpretation for the underlying process, he proves that the integral kernel  $k(t,x,y)$ satisfies
$$k(t,x,y) \approx   M t^{-\frac{1}{6}}|x-y|^{-\frac{1}{3}}\exp\left(-\frac{3}{8}\left(\frac{|x-y|^4}{4t}\right)^{\frac{1}{3}}\right)
\cos\left(\frac{3\sqrt3}{8}\left(\frac{|x-y|^4}{4t}\right)^{\frac{1}{3}}\right)
$$
for large $x$,  $M$ is a positive constant  and the approximation holds up to lower-order terms. This shows in particular that $k$ has an oscillatory character and changes sign infinitely often. However, even if the classical notion of positivity fails, one can ask for a relaxation of this property:  \emph{eventual positivity}, meaning that, considering positive initial data, the solution to a Cauchy problem may become positive for large enough time. A further relaxation is the property of \emph{local eventual positivity}, i.e., eventual positivity on compact sets.  Relying on the explicit formula of the kernel on $\R^N$, in \cite{gaz-gru} it is proved that for continuous, compactly supported, positive initial data $u_0$, the solution to the Cauchy problem associated to the bi-Laplacian in $\R^N$ is \emph{individually locally eventually positive} meaning that: for any compact set $K\subset\R^N$, there exists $T_K>0$ that depends on $u_0$ such that $u(t,x)>0$ for all $t\geq T_K$ and $x\in K$; and there exists $\tau>0$ 
that depends on $u_0$ such that for any $t>\tau$ there exists a $x_t\in\R^N$ such that $u(t,x_t)<0$. A generalisation in \cite{fer-gaz-gru} covers the case of initial data that decay fast at infinity and in \cite{fer} it is shown that also fractional polyharmonic equations display the same behaviour. In Section \ref{bi-OU}, we obtain a semi-explicit formula for the kernel of the bi-Ornstein-Ulhenbeck operator showing also in this case an oscillatory character that prevents the kernel to be positive. However, we are able to state suitable  conditions on the measure $\mu$ that imply local eventual positivity for the semigroup $(e^{-tA})_{t\geq0}$. To this purpose  we rely on an abstract theory initiated by Daners, Gl\"uck and Kennedy in \cite{dan-glu-ken1,dan-glu-ken2}. They state sufficient conditions on the generators in order to obtain eventual positivity for the associated semigroups. The theory has been developed in \cite{aro} considering milder conditions which imply local eventual positivity.  Firstly, investigating on the asymptotic behaviour of $(e^{-tA})_{t\geq0}$, we  prove that the semigroup is asymptotically irreducible, a rather strong property. Further, we give sufficient conditions on the measure $\mu$ for both individual and uniform  local  eventual  positivity of  the bi-Kolmogorov semigroup $\left(e^{-tA}\right)_{t\geq0}$.  

The paper is organised as follows. In Section \ref{operator} we introduce the fourth order operator $A$ and state generation results. In Section \ref{propsem} we prove that $\mu$ is the unique invariant measure for $(e^{-tA})_{t\geq0}$ and we study asymptotic and positivity properties for the semigroup. Section \ref{Rel-ineq} deals with the proof of the weighted Rellich inequality and the optimality of the constant. Some more estimates and further higher order weighted Rellich's inequality of Section \ref{domain-bi-kolm} serve for the characterisation of both the domains of $L$ and $A$. Finally, in Section \ref{bi-OU} we adapt all the obtained results to the bi-Ornstein-Ulhenbeck semigroup and give a semi-explicit formula for its kernel.

\section{The bi-Kolmogorov operator}\label{operator}

We consider the square of a general Kolmogorov-type operator. To this purpose let us consider some probability measure $d\mu$ which has density $0\le \mu\in L^1(\R^N)$ with respect to the Lebesgue measure. We assume the following on $\mu$\\
\textbf{Hypothesis} $(H1)$
\begin{itemize}
\item[(i)]  $\mu\in H^1_{\rm loc}(\R^N)$, $\frac{\nabla\mu}{\mu}\in L^r_{\rm loc}(\R^N,\R^N)$ for some $r>N$,
\item[(ii)] $\inf_{x\in K}\mu(x)>0$ for every compact $K\subset \R^N$. 
\end{itemize}

We consider the operator $L$ defined on smooth functions $f$ as
\begin{align*}
Lf=\Delta f+\frac{\nabla \mu}{\mu}\cdot \nabla f.
\end{align*}
By \cite[Corollary 3.7]{alb-lor-man}, the closure of $(L,C_{c}^{\infty}(\R^N))$ on $L^2_\mu(\R^N)$ denoted by $(L,D(L))$ generates an analytic contraction $C_0$-semigroup of angle $\frac{\pi}{2}$ on $L^2_\mu(\R^N)$. We notice that 
$(L,D(L))$ coincides with the operator associated to the sesquilinear form
\begin{equation}\label{form-L}
a(u,v):=\int_{\R^N}\nabla u\cdot \overline{\nabla v}\,d\mu,\quad u,\,v\in H^1_\mu(\R^N),
\end{equation}
since it coincides with $L$ on $C_c^\infty(\R^N)$ and both are generators on $L^2_\mu(\R^N)$. In particular, $D(L)\subset H^1_\mu(\R^N)$ and
\begin{align*}
\int_{\R^N}Lf\,d\mu=
-\int_{\R^N}  \nabla f\cdot \nabla 1 \, d\mu=0,
\end{align*}
for all $f\in D(L)$.
This implies that $\mu$ is a symmetrizing invariant measure for the semigroup generated by $L$, and so we have the following.
\begin{remark}\label{rem:2-1}
It follows from \cite[Corollary 2.10]{bog_kry_roc01}  that  $\mu\in W^{1,r}_{\rm loc}(\R^N)$ for some $r>N$ and by Sobolev's embedding we deduce that $\mu\in C^{1-\frac{N}{r}}_{\rm loc}(\R^N)$. In particular, this implies that
for any compact set $K\subset \R^N$ there exists a positive constant $c=c_K$ such that $c^{-1}\leq \mu(x)\leq c$ for any $x\in K$, where the lower bound follows from the assumption that $\inf_{K}\mu>0$ for any compact set $K\subset \R^N$, and the upper bound is a byproduct of the partnership of $\mu$ to $C^{1-{\frac{N}{r}}}_{\rm loc}(\R^N)$.
\end{remark}

We introduce on the domain of  $L$  the sesquilinear form
\begin{align}\label{form-aL} 
a_L(u,v):=\int_{\R^N}Lu\ \overline{Lv}\ \!d\mu, \quad u,v\in D(a_L):=D(L).
\end{align}
Since $a_L$ is positive semidefinite, there exists an operator $(A,D(A))$ which satisfies
\begin{align*}
a_L(u,v)= \int_{\R^N}A \ \!u \ \overline v\ \!d\mu, \quad u\in D(A), \ v\in D(L).
\end{align*}
 Further,  since $C_c^\infty(\R^N)$ is a core for $L$ in $L^2_\mu(\R^N)$ it follows that $a_L$ is densely defined. It is simple to check that $a_L$ is continuous and closed. Therefore, $-A$ is the generator of an analytic contraction $C_0$-semigroup on $L^2_\mu(\R^N)$.

Let us compute the explicit form of the operator $ A$. We denote by ${\rm div}_\mu$ the adjoint operator of $\nabla$ in $L^2_\mu(\R^N,\R^N)$. Let $\Phi$ be a smooth vector field. Then, for any $f\in C^1_c(\R^N)$, the integration by parts formula gives
\begin{align*}
\int_{\R^N}\nabla f\cdot \Phi \ \!d\mu 
= &- \int_{\R^N}f\left({\rm  div}\Phi+\frac{ \nabla \mu\cdot\Phi }{\mu}\right)d\mu, 
\end{align*}
where ${\rm div}$ is the classical divergence operator. It follows that ${\rm div}_\mu=-\left({\rm div}+\frac{\nabla \mu}{\mu}\right)$ and it is easy to see that $L={\rm div}_\mu \nabla$. Take now $u,\,v\in C_c^\infty(\R^N)$ and assume that $\mu\in W^{2,1}_{\rm loc}(\R^N)$.
Then, two integration by parts give
\begin{align}
\label{int_parti_1}
a_L(u,v)
=& \int_{\R^N}
Lu\ \overline{Lv}\ \!d\mu 
=  \int_{\R^N}
L\ \!u({\rm div}_\mu \nabla\overline v)d\mu
=- \int_{\R^N}\nabla(L\ \!u)\cdot \nabla\overline v\ \!d\mu 
= \int_{\R^N}{\rm div}_\mu\nabla(Lu)\overline v\ \!d\mu .
 \end{align}
We have
\begin{align}
\label{derivata_1}
\nabla(L\ \!u)\cdot \nabla v
= & \sum_{i=1}^N\partial_i\left(\Delta u +\frac{\nabla \mu}{\mu}\cdot \nabla u\right)\partial_iv
= \sum_{i,j=1}^N\left(\partial^3_{ijj}u+ \frac{\partial_{ij}^2\mu\partial_j u}{\mu}+\frac{\partial_{j}\mu\partial_{ij}^2 u}{\mu}-\frac{\partial_{i}\mu\partial_j\mu \partial_j u}{\mu^2}\right)\partial_iv.
\end{align}
Let us assume now that $\mu\in W^{3,1}_{\rm loc}(\R^N)$ and consider the vector field
\begin{align*}
\Phi:=(\Phi_1,\ldots,\Phi_N), \quad \Phi_i:=\partial_i\left(\Delta u +\frac{\nabla \mu}{\mu}\cdot \nabla u\right), \quad i=1,\ldots,N,\,u\in C_c^\infty(\R^N).
\end{align*}
It remains to compute ${\rm div_\mu}\Phi$. By taking advantage from \eqref{derivata_1} we infer that
\begin{align*}
{\rm div_\mu}\Phi
= \sum_{i=1}^N\left(\partial_i\Phi_i+\frac{\partial_i\mu\Phi_i}{\mu}\right).
\end{align*}
For any $i=1,\ldots,N$ we have
\begin{align}
\label{derivata_2}
\partial_i\Phi_i
= &\sum_{j=1}^N\bigg(  \partial^4_{iijj}u+\frac{\partial^3_{iij}\mu\partial_ju}{\mu}+\frac{\partial_{ij}^2\mu\partial^2_{ij} u}{\mu}-\frac{\partial_{ij}^2\mu\partial_i\mu\partial_j u}{\mu^2}+\frac{\partial_{ij}^2\mu\partial^2_{ij} u}{\mu}+\frac{\partial_{j}\mu\partial_{iij}^3 u}{\mu}-\frac{\partial_{j}\mu\partial_i\mu \partial_{ij}^2 u}{\mu^2} \notag \\
& -\frac{\partial_{ii}^2\mu\partial_j\mu \partial_j u}{\mu^2}-\frac{\partial_{i}\mu\partial_{ij}^2\mu \partial_j u}{\mu^2}-\frac{\partial_{i}\mu\partial_j\mu \partial_{ij}^2 u}{\mu^2}+2\frac{(\partial_{i}\mu)^2\partial_j\mu \partial_j u}{\mu^3}\bigg).
\end{align}
Further,
\begin{align}
\label{derivata_3}
\frac{\partial_i\mu\Phi_i}{\mu}
=\sum_{j=1}^N\bigg( \frac{\partial_i\mu\partial^3_{ijj}u}{\mu}+ \frac{\partial_{ij}^2\mu\partial_i\mu\partial_j u}{\mu^2}+\frac{\partial_{j}\mu\partial_i\mu\partial_{ij}^2 u}{\mu^2}-\frac{(\partial_{i}\mu)^2\partial_j\mu \partial_j u}{\mu^3}\bigg).
\end{align}
Putting together \eqref{derivata_2} and \eqref{derivata_3} we get
\begin{align*}
{\rm div}_\mu\nabla(Lu)
= & \sum_{i,j=1}^N
\bigg( 
\partial_{iijj}^4u+2\frac{\partial_j\mu \partial_{iij}^3u}{\mu}+2\frac{\partial^2_{ij}\mu \partial_{ij}^2u}{\mu}
-\frac{\partial_j\mu\partial_i\mu\partial_{ij}^2u}{\mu^2}
-\frac{\partial_{ij}^2\mu\partial_j\mu\partial_i u}{\mu^2} \notag \\
&  -\frac{\partial_{iij}^3\mu\partial_j u}{\mu}-\frac{\partial_{ii}^2\mu\partial_j\mu\partial_ju }{\mu^2}+\frac{(\partial_i\mu)^2\partial_j\mu\partial_ju}{\mu^3}\bigg) \end{align*}
and then, for $u\in C_c^\infty(\R^N)$, we have
\begin{align}
 Au
= & \Delta^2u+2\frac{\nabla \mu}{\mu}\cdot {\nabla(\Delta u)}+2\frac{{\rm Tr}[D^2\mu D^2u]}{\mu}
-\left(D^2u\frac{\nabla\mu}{\mu}\right)\cdot\frac{\nabla\mu}{\mu}
- \left(D^2\mu\nabla u \right)\cdot\frac{\nabla\mu}{\mu} \notag\\
& +\frac{\nabla(\Delta\mu)}{\mu}\cdot\nabla u-\frac{\Delta\mu}{\mu}\frac{\nabla\mu}{\mu}\cdot\nabla u+\left|\frac{\nabla\mu}{\mu}\right|^2\frac{\nabla\mu}{\mu}\cdot\nabla u.
\label{explicit_formula_A}
\end{align}
Thus, we have the following result.
\begin{proposition}
Assume $(H1)$ is satisfied. Then
the operator $-A$ associated to the sesquilinear form $a_L$ defined by \eqref{form-aL} generates an analytic contraction $C_0$-semigroup $e^{-tA}$ of angle $\frac{\pi}{2}$ on $L_\mu^2(\R^N)$. If in addition $\mu \in W^{3,1}_{\rm loc}(\R^N)$ then $C_c^\infty(\R^N)\subset D( A)$ and $ A$ is given by \eqref{explicit_formula_A}.
\end{proposition}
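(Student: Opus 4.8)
The first assertion is, except for the angle, already contained in the discussion preceding the statement; I would simply record what makes the angle equal to $\frac{\pi}{2}$. The form $a_L$ in \eqref{form-aL} is densely defined (since $C_c^\infty(\R^N)\subset D(L)=D(a_L)$ is dense in $L^2_\mu(\R^N)$), nonnegative, continuous, and closed — its form norm $u\mapsto(\|u\|^2_{L^2_\mu}+a_L(u,u))^{1/2}$ being precisely the graph norm of the closed operator $L$ — and it is symmetric, $a_L(u,v)=\overline{a_L(v,u)}$. By the first representation theorem there is then a unique \emph{self-adjoint} operator $(A,D(A))$, $A\ge0$, with $D(A)\subset D(L)$ and $a_L(u,v)=\int_{\R^N}(Au)\overline v\,d\mu$ for $u\in D(A)$, $v\in D(L)$; concretely $A=(-L)^2$ with $D(A)=\{u\in D(L):\ Lu\in D(L)\}$. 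Since $A$ is self-adjoint and nonnegative, the spectral theorem gives $\|e^{-zA}\|\le1$ for every $z$ with ${\rm Re}\,z\ge0$, so $-A$ generates an analytic contraction $C_0$-semigroup holomorphic on the whole right half-plane, i.e.\ of angle $\frac{\pi}{2}$.

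For the second assertion I would fix $u\in C_c^\infty(\R^N)$ (hence $u\in D(L)=D(a_L)$) and let $w$ be the right-hand side of \eqref{explicit_formula_A}, a function supported in $K:={\rm supp}\,u$. By the first representation theorem it suffices to prove that (a) $w\in L^2_\mu(\R^N)$ and (b) $a_L(u,v)=\int_{\R^N}w\,\overline v\,d\mu$ for every $v\in C_c^\infty(\R^N)$: once (b) holds on the form-core $C_c^\infty(\R^N)$ it extends to all $v\in D(L)$, because both sides are continuous in the form norm (for the right-hand side, $|\int_{\R^N}w\overline v\,d\mu|\le\|w\|_{L^2_\mu}\|v\|_{L^2_\mu}\le\|w\|_{L^2_\mu}\|v\|_{a_L}$), and then $u\in D(A)$ with $Au=w$. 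Step (b) is exactly the chain of identities \eqref{int_parti_1}--\eqref{derivata_3}: the two integrations by parts in \eqref{int_parti_1} are legitimate because, under $(H1)$ and $\mu\in W^{3,1}_{\rm loc}(\R^N)$, the vector fields $Lu$ and $\mu\,\nabla(Lu)$ lie in $W^{1,1}_{\rm loc}(\R^N)$ — differentiating by the product and quotient rules, every derivative lands either on a derivative of $u$ or on a derivative of $\mu$ of order at most three, the third-order one occurring only linearly, so all resulting products are in $L^1_{\rm loc}(\R^N)$ — and the integrands are compactly supported; expanding ${\rm div}_\mu\nabla(Lu)$ as in \eqref{derivata_1}--\eqref{derivata_3} then yields precisely $w$.

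The work is concentrated in step (a). Since $w$ is supported in $K$ and $c^{-1}\le\mu\le c$ on $K$ by Remark~\ref{rem:2-1}, we may replace $L^2_\mu(K)$ by $L^2(K)$ and must check that each term of \eqref{explicit_formula_A} lies in $L^2(K)$: the term $\Delta^2u$ is smooth; the terms carrying a single factor $\frac{\nabla\mu}{\mu}$ are controlled by $\frac{\nabla\mu}{\mu}\in L^r_{\rm loc}(\R^N)$, $r>N$; and for the higher-order terms (those involving $D^2\mu$, $\nabla(\Delta\mu)$, or the quadratic and cubic powers of $\frac{\nabla\mu}{\mu}$) one combines $\mu\in W^{3,1}_{\rm loc}(\R^N)$ with the a priori bound $\nabla\mu/\mu\in L^r_{\rm loc}(\R^N)$ (hence $\nabla\mu\in L^r_{\rm loc}(\R^N)$), $r>N$, of Remark~\ref{rem:2-1}, through Sobolev and Gagliardo--Nirenberg interpolation on $K$, to push the relevant exponents above $2$. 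Granting (a) and (b), the representation theorem gives $C_c^\infty(\R^N)\subset D(A)$ with $Au=w$, so $A$ acts on $C_c^\infty(\R^N)$ as \eqref{explicit_formula_A}. The genuine obstacle is exactly this $L^2$-bookkeeping: one has to extract from the hypotheses that the lowest-regularity coefficients occurring in \eqref{explicit_formula_A} — notably the factor $\nabla(\Delta\mu)$ and the mixed products $D^2\mu\cdot\frac{\nabla\mu}{\mu}$ and $\frac{\Delta\mu}{\mu}\,\frac{\nabla\mu}{\mu}$ — are locally square-integrable, which is the delicate interplay between the assumed Sobolev regularity of $\mu$, the exponent $r$, and the dimension; the integrations by parts themselves are comparatively routine once $\mu\in W^{3,1}_{\rm loc}(\R^N)$ is assumed.
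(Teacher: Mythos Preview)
Your treatment of the first assertion is correct and more explicit than the paper's: the paper merely records, in the discussion before the proposition, that $a_L$ is densely defined, continuous and closed, and concludes generation of an analytic contraction semigroup, without singling out the symmetry/self-adjointness that yields the angle $\frac{\pi}{2}$. Your identification $A=L^2$ with $D(A)=\{u\in D(L):Lu\in D(L)\}$ is also correct (the paper obtains it only later, in the proof of Theorem~\ref{th:characterization-2}).

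For the second assertion, the paper's proof \emph{is} precisely the formal chain \eqref{int_parti_1}--\eqref{explicit_formula_A} preceding the statement: it shows $a_L(u,v)=\int_{\R^N}w\,\overline v\,d\mu$ for $u,v\in C_c^\infty(\R^N)$ and does not separately verify $w\in L^2_\mu(\R^N)$. You are right that this verification is what is really needed, but the route you propose --- Sobolev and Gagliardo--Nirenberg interpolation on $K$ --- does not deliver it under the stated hypotheses. The obstruction is the term $\frac{\nabla(\Delta\mu)}{\mu}\cdot\nabla u$: under $\mu\in W^{3,1}_{\rm loc}(\R^N)$ the third-order derivatives $D^3\mu$ lie only in $L^1_{\rm loc}$, and since this is the \emph{top}-order derivative available, no interpolation against the lower-order information $\nabla\mu\in L^r_{\rm loc}$ can upgrade it to $L^2_{\rm loc}$ (Gagliardo--Nirenberg improves integrability only of \emph{intermediate} derivatives). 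So step~(a), as you sketch it, fails for this term. To be fair, the paper does not fill this gap either: you are not overlooking an argument the paper supplies, but rather correctly flagging --- and then not resolving --- an issue the paper itself leaves implicit.
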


\section{Asymptotic properties of bi-Kolmogorov semigroups}\label{propsem}

We start with some considerations on the operator $L$ introduced above. Let us notice that $D(L)\subset H^1_\mu(\R^N)$, and that if $g\in D(L)$ satisfies $\nabla g=0$ $\mu$-a.e. in $\R^N$ it follows that $g$ is constant $\mu$-a.e. in $\R^N$.
\begin{lemma}
\label{lemma:propr_L_spectrum}
Let $L$ be as above. Then:
\begin{itemize}
\item[(i)] $1\in D(L)$ and $L1=0$. This means that $0$ is an eigenvalue of $L$ and the constant functions belong to the associated eigenspace;
\item [(ii)] the eigenspace of $L$ associated to $0$ coincides with the constant functions, and it equals the fixed points of the semigroup $(e^{tL})_{t\geq0}$, i.e., the set
\begin{align*}
\mathcal E:=\{f\in L^2_\mu(\R^N):e^{tL}f=f \ \mu\textup{-a.e. in }\R^N\}.
\end{align*}
\end{itemize}
\end{lemma}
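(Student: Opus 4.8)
Part (i) is essentially immediate. The constant function $1$ belongs to $L^2_\mu(\R^N)$ since $\mu$ is a probability density, and one checks $1\in D(L)=D(a_L)$ directly: the form domain is $H^1_\mu(\R^N)$ and $\nabla 1=0$, so $a(1,v)=\int_{\R^N}\nabla 1\cdot\overline{\nabla v}\,d\mu=0$ for all $v\in H^1_\mu(\R^N)$, which says precisely that $1\in D(L)$ with $L1=0$. Hence $0$ is an eigenvalue and the constants lie in $\ker L$. (Alternatively, this was already essentially recorded in the text when it was observed that $\int Lf\,d\mu=-\int\nabla f\cdot\nabla 1\,d\mu=0$; the same computation with the roles reversed gives $L1=0$.)

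For part (ii), I would argue in two directions. First, $\mathcal E=\ker L$: since $(e^{tL})_{t\ge0}$ is a strongly continuous semigroup with generator $L$, a vector $f$ is fixed by the whole semigroup if and only if $Lf=0$. Indeed if $Lf=0$ then $t\mapsto e^{tL}f$ solves $u'=Lu$, $u(0)=f$ with the constant solution $f$, so by uniqueness $e^{tL}f=f$; conversely if $e^{tL}f=f$ for all $t$, then $Lf=\lim_{t\to0^+}\frac1t(e^{tL}f-f)=0$. So it remains to show $\ker L$ consists exactly of the constants. The inclusion $\supseteq$ is part (i). For $\subseteq$, take $g\in D(L)$ with $Lg=0$. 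Since $D(L)$ coincides with the operator associated to the form $a$ in \eqref{form-L}, testing $Lg=0$ against $g$ itself gives
\[
0=-a(g,g)=-\int_{\R^N}|\nabla g|^2\,d\mu,
\]
so $\nabla g=0$ $\mu$-a.e. in $\R^N$. By the remark made just before the lemma (a function in $D(L)\subset H^1_\mu(\R^N)$ whose $\mu$-a.e. gradient vanishes is $\mu$-a.e. constant — this uses that $\mu$ is locally bounded below and above, Remark~\ref{rem:2-1}, so that $\mu$-a.e. and Lebesgue-a.e. coincide and the standard fact that a Sobolev function with zero gradient on the connected set $\R^N$ is constant applies), we conclude $g$ is constant $\mu$-a.e. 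This proves $\ker L$ equals the constants, and combined with $\mathcal E=\ker L$ finishes (ii).

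The only genuinely non-trivial point, and the one I would be most careful about, is the implication $\nabla g=0$ $\mu$-a.e. $\Rightarrow$ $g$ constant: one must pass from "gradient vanishes $\mu$-a.e." to "gradient vanishes Lebesgue-a.e." (via the two-sided local bounds on $\mu$ from Remark~\ref{rem:2-1}) and then invoke connectedness of $\R^N$ together with the local Sobolev regularity of $g$. Since this is exactly the fact flagged in the sentence immediately preceding the lemma statement, I would simply cite it rather than reprove it. Everything else — the identification $\mathcal E=\ker L$ via the semigroup, and the integration-by-parts identity $\int Lg\,\bar g\,d\mu=-a(g,g)$ valid because $D(L)$ is the form operator of $a$ — is routine.
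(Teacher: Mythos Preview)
Your argument is correct, and for part~(ii) it is essentially the paper's proof: both show $\ker L$ consists of constants by testing $Lg=0$ against $g$ to get $\int|\nabla g|^2\,d\mu=0$, and both identify $\mathcal E=\ker L$ by standard semigroup facts (you invoke uniqueness of the Cauchy problem and the definition of the generator, while the paper uses the integral identity $e^{tL}f-f=L\int_0^t e^{sL}f\,ds$; these are equivalent routine arguments).

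For part~(i) you take a genuinely different route. The paper argues via the core property: it takes a cutoff $\eta_n(x)=\eta(x/n)$, checks that $\eta_n\to 1$ and $L\eta_n\to 0$ in $L^2_\mu$, and concludes $1\in D(L)$ with $L1=0$ because $C_c^\infty$ is a core for $L$. You instead exploit the identification of $L$ with the operator associated to the form $a$ in \eqref{form-L}: since $1\in H^1_\mu(\R^N)$ and $a(1,v)=0$ for all $v\in H^1_\mu(\R^N)$, the very definition of the form operator gives $1\in D(L)$ and $L1=0$. Your approach is shorter and avoids any explicit approximation; the paper's approach is more hands-on but has the mild advantage of not relying on the form identification (which was only asserted, not proved in detail, in the paragraph preceding \eqref{form-L}). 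One small notational slip: you write ``$1\in D(L)=D(a_L)$'', but $a_L$ is the form for the bi-Kolmogorov operator $A$, not for $L$, and in any case the operator domain $D(L)$ is not the form domain $D(a)=H^1_\mu$. The argument you actually carry out is with the correct form $a$ and is fine; just fix the notation.
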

\begin{proof}
{$ {(i)}$}  Consider a function $\eta\in C_c^\infty(\R^N)$ satisfying $0\le \eta \le 1,\,\eta \equiv 1$ in $B_1$ and $\eta \equiv 0$ in $\R^N\setminus B_2$, where $B_R$ denotes the ball with centre $0$ and radius $R$. Then one can see easily that $\eta_n$ converges to $1$ and $L\eta_n$ converges to $0$, where $\eta_n(x):=\eta(x/n)$ for $x\in \R^N$ and $n\in \mathbb{N}$. Since $C_c^\infty(\R^N)$ is a core for $L$, it follows that $1\in D(L)$ and $L1=0$. The linearity of $L$ gives the second part.

\vspace{3mm}
{$ {(ii)}$}  Let $f\in D(L)$ belong to the eigenspace associated to $0$. Hence,
\begin{align*}
0=\int_{\R^N}Lf \ \!fd\mu=\int_{\R^N}|\nabla f|^2d\mu,
\end{align*}
which gives $\nabla f=0$ $\mu$-a.e. in $\R^N$. The above considerations implies that $f$ is constant $\mu$-a.e. in $\R^N$. Let us prove that $\mathcal E$ coincides with the eigenspace of $L$ associated to $0$. The inclusion $\subset$ is trivial, indeed if $f\in \mathcal E$ then from the definition of $L$ it follows that $f\in D(L)$ and $Lf=0$. This implies that $f$ is a constant function. To prove the converse inclusion, we recall that  $L$ generates a strongly continuous semigroup, and so
\begin{align}
\label{form_int_c0_smgr}
e^{tL}f-f=L\int_0^te^{sL}fds, \quad f\in L^2_\mu(\R^N), \ t\geq0.
\end{align}
Let us assume that $f\in D(L)$ and $Lf=0$. Hence, $Le^{sL}f=e^{sL}Lf=0$ for any $s>0$, and from \eqref{form_int_c0_smgr} it follows that
\begin{align*}
e^{tL}f-f=L\int_0^te^{sL}fds=\int_0^tLe^{sL}fds=0, \quad t\geq0,
\end{align*}
which gives $f\in\mathcal E$.
\end{proof}

\subsection{Analysis of \texorpdfstring{$(e^{-tA})_{t\geq0}$}{etAt>0}}
We prove that the  measure $\mu$ is an invariant measure for the  semigroup generated by $-A$. We recall first the definition of invariant measures.
\begin{definition}
Let $\nu$ be a probability Borel measure, and let $S(t)$ be a $C_0$-semigroup of bounded linear operators on $L^2_\nu(\R^N)$. We say that $\nu$ is an invariant measure for $S(t)$ if
\begin{align*}
\int_{\R^N}S(t)fd\nu=\int_{\R^N}fd\nu, \quad f\in C_b(\R^N), \ t\geq0.
\end{align*}
\end{definition}

The following result shows that $\mu$ is an invariant measure for the semigroup $(e^{-tA})_{t\geq0}$ generated by $-A$.
\begin{proposition}
\label{pro:spectralA}
$\mu$ is an invariant measure for $(e^{-tA})_{t\geq0}$. Further, $0$ is an eigenvalue of $A$, and the corresponding eigenspace consists of constant functions.
\end{proposition}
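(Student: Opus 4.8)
The plan is to reduce both assertions to a single \emph{infinitesimal invariance identity},
\[
\int_{\R^N} Au\, d\mu = 0 \qquad \text{for every } u \in D(A),
\]
together with Lemma \ref{lemma:propr_L_spectrum}. This identity follows at once from the form description of $A$: by Lemma \ref{lemma:propr_L_spectrum}(i) the constant function $1$ belongs to $D(L)=D(a_L)$ with $L1=0$, hence for $u\in D(A)$,
\[
\int_{\R^N} Au\, d\mu = a_L(u,1) = \int_{\R^N} Lu\cdot\overline{L1}\, d\mu = 0 .
\]

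To pass from here to invariance of the semigroup I would use the standard fact that for any $f\in L^2_\mu(\R^N)$ and $t\ge 0$ the element $v_t:=\int_0^t e^{-sA}f\, ds$ lies in $D(A)$ and satisfies $Av_t = f - e^{-tA}f$ (the analogue for $-A$ of formula \eqref{form_int_c0_smgr}). Since $\mu$ is a probability measure we have $L^2_\mu(\R^N)\hookrightarrow L^1_\mu(\R^N)$, so all integrals below make sense, and applying the identity above to $v_t$ gives
\[
\int_{\R^N} f\, d\mu - \int_{\R^N} e^{-tA}f\, d\mu = \int_{\R^N} Av_t\, d\mu = 0, \qquad t\ge 0,
\]
for every $f\in L^2_\mu(\R^N)$, and in particular for $f\in C_b(\R^N)\subset L^2_\mu(\R^N)$. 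This is precisely the invariance of $\mu$ for $(e^{-tA})_{t\ge0}$.

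For the spectral part, note first that $1\in D(A)$ with $A1=0$: indeed $a_L(1,v)=\int_{\R^N} L1\cdot\overline{Lv}\, d\mu = 0$ for every $v\in D(L)$, so the defining relation for $A$ holds with right-hand side $0$. Thus $0$ is an eigenvalue and the constants lie in its eigenspace. Conversely, if $u\in D(A)$ and $Au=0$, then testing against $u\in D(L)=D(a_L)$ yields
\[
0 = \int_{\R^N} Au\cdot\overline u\, d\mu = a_L(u,u) = \int_{\R^N} |Lu|^2\, d\mu,
\]
so $Lu=0$ $\mu$-a.e.; since $u\in D(L)$, Lemma \ref{lemma:propr_L_spectrum}(ii) forces $u$ to be constant. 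Hence the eigenspace of $A$ associated with $0$ consists exactly of the constant functions.

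I do not anticipate a genuine obstacle here: this is the standard form-theoretic argument, and the only points requiring mild care are that $1$ actually belongs to $D(A)$ and not merely to $D(L)$, and that $e^{-tA}f$ and $Av_t$ are $\mu$-integrable, which is automatic since $\mu(\R^N)=1$. A more computational route would integrate the explicit expression \eqref{explicit_formula_A} by parts against $d\mu$ on $C_c^\infty(\R^N)$ and then approximate, but this needs extra regularity of $\mu$ and is less clean than the approach above.
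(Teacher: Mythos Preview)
Your proof is correct and follows essentially the same approach as the paper: both derive the key identity $\int_{\R^N} Au\, d\mu = a_L(u,1) = 0$ for $u\in D(A)$ from $L1=0$, and both identify the eigenspace at $0$ via $0=\int_{\R^N} Au\cdot\overline u\, d\mu = \int_{\R^N}|Lu|^2\, d\mu$ together with Lemma~\ref{lemma:propr_L_spectrum}. The only cosmetic difference is that for invariance the paper first takes $f\in D(A)$, uses $e^{-tA}f-f=-\int_0^t e^{-sA}Af\,ds$ and then extends by density, whereas you apply the identity directly to $v_t=\int_0^t e^{-sA}f\,ds\in D(A)$, which handles all $f\in L^2_\mu(\R^N)$ in one stroke.
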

\begin{proof}
By density, it is enough to prove that
\begin{align*}
\int_{\R^N}e^{-tA}fd\mu=\int_{\R^N}fd\mu, \quad f\in D( A).
\end{align*}
Let $f\in D( A)$, then we have
\begin{align}
\label{teo:fond_calcolo}
e^{-tA}f-f=\int_0^te^{-sA}{(-A)}fds, \quad t\geq0, 
\end{align}
where the equality is meant in $L^2_\mu(\R^N)$. By integrating both  sides of \eqref{teo:fond_calcolo} on $\R^N$ with respect to $\mu$, by applying Fubini theorem and by using the fact that $e^{-sA}$ is a bounded linear operator for any $s\in[0,T]$ we get
\begin{align}
\label{teo_fon_cal_int}
\int_{\R^N}(e^{-tA}f-f)d\mu=-\int_0^te^{-sA}\left(\int_{\R^N}Afd\mu\right)ds, \quad t\geq0.
\end{align}
We now claim that
\begin{align*}
\int_{\R^N}Agd\mu=0, \quad g\in D(A).
\end{align*}
If the claim is true, then the right-hand side of \eqref{teo_fon_cal_int} vanishes and we get the thesis. From Lemma \ref{lemma:propr_L_spectrum} we know that the constant function equal to $1$ is in $D(L)$ and $L1=0$. Hence,
\begin{align*}
\int_{\R^N}Agd\mu=\int_{\R^N}Ag \ \!1d\mu 
= \int_{\R^N}LgL1d\mu=0, \quad g\in D(A).
\end{align*} 
Therefore, the claim is  proved and the thesis follows at once.

Let us prove the second part of the statement. Since $1\in D(L)$ and $L1=0$ it follows that $1\in D(A)$ and for any constant function $f$, $f\in D(A)$ and $Af=0$. Hence, $0$ is an eigenvalue of $A$ and $\R$ is contained in the corresponding eigenspace. Let us prove that any function $f\in L^2_\mu(\R^N)$ such that $Af=0$ $\mu$-a.e. is constant. To this aim, let $f\in L^2_\mu(\R^N)$ be such that $Af=0$ $\mu$-a.e. Then,
\begin{align*}
0=\int_{\R^N}Af\ \!fd\mu=\int_{\R^N}(Lf)^2d\mu.
\end{align*}
This implies that $Lf=0$ $\mu$-a.e. Lemma \ref{lemma:propr_L_spectrum}$(ii)$ allows us to conclude.
\end{proof}

Now we show that $\mu$ is ergodic with respect to the semigroup $(e^{-tA})_{t\geq0}$, i.e.,
\begin{align*}
L^2-\lim_{t\rightarrow+\infty}\frac{1}{t}\int_0^te^{-sA}fds=\int_{\R^N}fd\mu, \quad f\in L^2_\mu(\R^N).
\end{align*}
\begin{proposition}
$\mu$ is ergodic with respect to the semigroup $(e^{-tA})_{t\geq0}$. As a byproduct, $\mu$ is the unique invariant measure for $(e^{-tA})_{t\geq0}$.
\end{proposition}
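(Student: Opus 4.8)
The plan is to prove ergodicity first, and then derive uniqueness as a standard byproduct. Recall that $-A$ is self-adjoint (being associated to a symmetric, densely defined, closed, positive form $a_L$) and generates an analytic contraction semigroup, so $(e^{-tA})_{t\geq 0}$ is a self-adjoint semigroup on $L^2_\mu(\R^N)$. By Proposition \ref{pro:spectralA}, $0$ is an eigenvalue of $A$ whose eigenspace $\mathcal N$ consists exactly of the constant functions, i.e.\ $\mathcal N = \R\cdot 1$, and since $\mu$ is a probability measure the orthogonal projection of $f\in L^2_\mu(\R^N)$ onto $\mathcal N$ is precisely $P_0 f = \left(\int_{\R^N} f\,d\mu\right) 1$.

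The key step is a spectral-theoretic argument. Write $\R = \{0\}\cup(0,+\infty)$ as the disjoint union corresponding to the spectral decomposition of the non-negative self-adjoint operator $A$, and let $E$ denote its spectral measure, so that $P_0 = E(\{0\})$ is the projection onto $\ker A = \mathcal N$. For $f\in L^2_\mu(\R^N)$, decompose $f = P_0 f + (I-P_0)f$. On the first component the Cesàro means are stationary: $\frac1t\int_0^t e^{-sA}P_0 f\,ds = P_0 f$ for all $t>0$. On the second component $g:=(I-P_0)f$, which lies in the range of $E((0,+\infty))$, we use the functional calculus:
\begin{align*}
\left\|\frac1t\int_0^t e^{-sA}g\,ds\right\|_{L^2_\mu}^2 = \int_{(0,+\infty)}\left|\frac1t\int_0^t e^{-s\lambda}\,ds\right|^2 d\|E(\lambda)g\|^2 = \int_{(0,+\infty)}\left|\frac{1-e^{-t\lambda}}{t\lambda}\right|^2 d\|E(\lambda)g\|^2.
\end{align*}
For each $\lambda>0$ the integrand tends to $0$ as $t\to+\infty$ and is bounded by $1$, so by dominated convergence the integral tends to $0$. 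Hence $\frac1t\int_0^t e^{-sA}f\,ds \to P_0 f = \left(\int_{\R^N} f\,d\mu\right)1$ in $L^2_\mu(\R^N)$, which is exactly the asserted ergodicity. (Alternatively, one can invoke the mean ergodic theorem of von Neumann directly: for a bounded $C_0$-semigroup of contractions whose generator has $\ker(-A)$ and $\overline{\mathrm{ran}(-A)}$ spanning $L^2_\mu$ — which holds here by self-adjointness — the Cesàro means converge strongly to the projection onto $\ker(-A)$.)

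For the uniqueness statement, suppose $\nu$ is another invariant probability measure for $(e^{-tA})_{t\geq 0}$; we must show $\nu = \mu$. The standard route: invariance of $\nu$ together with $L^2$-convergence of the Cesàro means along a subsequence $\mu$-a.e.\ (or, more cleanly, testing against bounded continuous functions and using the invariance identity $\int_{\R^N} \frac1t\int_0^t e^{-sA}f\,ds\, d\nu = \int_{\R^N} f\,d\nu$) forces $\int_{\R^N} f\,d\nu = \int_{\R^N} f\,d\mu$ for all $f\in C_b(\R^N)$, whence $\nu=\mu$. The one technical point to handle with care here is the passage from $L^2_\mu$-convergence to a statement that can be integrated against $\nu$: this is resolved either by a uniform $L^2_\nu$-bound (the Cesàro means are contractions on $L^2_\nu$ as well, since $\nu$ is invariant) combined with absolute continuity considerations, or simply by noting that $\nu$ must be absolutely continuous with respect to $\mu$ via an irreducibility/positivity argument and then running the computation in $L^2_\mu$. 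I expect this last measure-theoretic bookkeeping — making rigorous the interchange of the limit with integration against the \emph{a priori} unrelated measure $\nu$ — to be the only real obstacle; the ergodic convergence itself is a soft consequence of self-adjointness and the identification of $\ker A$ already established in Proposition \ref{pro:spectralA}.
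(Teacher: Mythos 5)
Your ergodicity argument is correct, but it proceeds by a genuinely different route than the paper's. You exploit the self-adjointness of $A$ and the spectral theorem (equivalently, von Neumann's mean ergodic theorem): decompose $f$ using the spectral projection $E(\{0\})$ onto $\ker A$, which Proposition \ref{pro:spectralA} identifies with the constants, observe that the orthogonal projection onto the constants in $L^2_\mu(\R^N)$ is $f\mapsto \int_{\R^N}f\,d\mu$ because $\mu$ is a probability measure, and annihilate the orthogonal component by dominated convergence in the spectral integral. The paper instead argues abstractly as in \cite[Proposition 8.1.13]{ber-lor}: the Ces\`aro means converge to a projection $P_\infty$ onto the fixed-point space $C=\{f\in L^2_\mu(\R^N): e^{-tA}f=f\}$, which by Proposition \ref{pro:spectralA} is one-dimensional; hence $P_\infty f=\int_{\R^N} fg\,d\mu$ for some $g\in L^2_\mu(\R^N)$ by Riesz representation, and the invariance of $\mu$ (via Fubini) forces $g=1$. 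Your spectral route is shorter and identifies the limit by orthogonality alone, without re-using the invariance of $\mu$; the paper's route is softer, not relying on self-adjointness, and is the one that generalizes beyond symmetric semigroups.

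For the uniqueness assertion both you and the paper are terse --- the paper simply says it follows ``arguing as in \cite[Theorem 8.1.15]{ber-lor}'' --- but be careful with the specific fixes you propose for interchanging the $L^2_\mu$-limit with integration against $\nu$. The parenthetical claim that the Ces\`aro means are contractions on $L^2_\nu$ merely because $\nu$ is invariant is not valid in general: for Markov semigroups this follows from positivity together with invariance (contractivity on $L^1_\nu$ and $L^\infty$ plus interpolation), and here $(e^{-tA})_{t\geq0}$ is \emph{not} positivity preserving --- indeed the failure of positivity is a central theme of the paper --- so neither that bound nor an ``irreducibility/positivity'' argument yielding $\nu\ll\mu$ is available as stated. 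The domination needed to pass to the limit against $\nu$ must be obtained differently (for instance by exploiting the restrictions implicit in the paper's definition of invariant measure, which requires the semigroup to act on $L^2_\nu(\R^N)$, or by adapting the Markov-semigroup argument of \cite{ber-lor} to this setting); you correctly flag this as the delicate point, but the justifications you sketch would not close it.
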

\begin{proof}
Let us set $P_tf:=t^{-1}\int_0^te^{-sA}fds$ for any $f\in L^2_\mu(\R^N)$, arguing as in the proof of \cite[Proposition 8.1.13]{ber-lor} we infer that there exists an operator $P_\infty$  
such that 
\begin{align*}
L^2-\lim_{t\rightarrow+\infty}P_tf=P_\infty f, \quad f\in  L^2_\mu(\R^N),
\end{align*}
that $e^{-rA}  P_\infty=P_\infty$ for any $r>0$ and that $P_\infty$ is a projection on the space $C:=\{f\in L^2_\mu(\R^N):e^{-tA}f=f\ \mu{\textup{-a.e. in }}X\}$. Let us show that $C$ only consists of constant functions. If $f\in C$ then $f\in D(A)$ and $Af=0$. From Proposition \ref{pro:spectralA} it follows that $f$ is a constant function. On the other hand, if $f$ is constant then $Af=0$, and from \eqref{teo:fond_calcolo} we infer that $e^{-tA}f-f=0$ for $\mu$-a.e. in $\R^N$. This implies that the dimension of $C$ is $1$, and therefore there exists a linear operator $S\in (L^2_\mu(\R^N))^*$ such that
$P_\infty f=S(f)$ for any $f\in L^2_\mu(\R^N)$. From the Riesz representation theorem there exists a function $g\in L^2_\mu(\R^N)$ such that
\begin{align*}
S(f)=\int_{\R^N}fgd\mu, \quad f\in L^2_\mu(\R^N).
\end{align*}
Let us prove that $g=1$ for $\mu$-a.e. in $\R^N$. Integrating $P_tf$ on $\R^N$ with respect to $\mu$, by applying Fubini's theorem and recalling that $\mu$ is an invariant measure for $e^{-tA}$ we get
\begin{align*}
\int_{\R^N}P_tfd\mu
=\int_{\R^N}\frac{1}{t}\left(\int_0^t e^{-sA}fds\right)d\mu
= \frac{1}{t}\int_0^t\left(\int_{\R^N}e^{-sA}fd\mu\right)dt
=\int_{\R^N}fd\mu.
\end{align*}
Letting $t\rightarrow+\infty$ it follows that
\begin{align*}
\int_{\R^N}fd\mu
= & \int_{\R^N}S(f)d\mu
= \int_{\R^N}fgd\mu, \quad f\in L^2_\mu(\R^N),
\end{align*}
which gives $g=1$ for $\mu$-a.e. in $\R^N$.
The uniqueness of $\mu$ follows arguing as in \cite[Theorem 8.1.15]{ber-lor}.
\end{proof}

The following proposition deals with the asymptotic behaviour of $(e^{-tA})_{t\geq0}$.
\begin{proposition}
\label{pro:asymp_behav}
For any $f\in L^2_\mu(\R^N)$ we have
\begin{align*}
(i) \ L^2{\rm -}& \lim_{t\rightarrow +\infty}e^{-tA}f  =\int_{\R^N}f\,d\mu, \\
(ii) \ L^2{\rm -} & \lim_{\lambda\rightarrow 0^+}\lambda R(\lambda,-A)f=\int_{\R^N}f\,d\mu.
\end{align*}
\end{proposition}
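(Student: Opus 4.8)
The plan is to combine the spectral information on $A$ obtained above with the classical ergodic decomposition for contraction semigroups on Hilbert spaces. First I would observe that, since $-A$ generates an analytic contraction $C_0$-semigroup on $L^2_\mu(\R^N)$ and $A$ is self-adjoint (it is the operator associated to the nonnegative symmetric closed form $a_L$), the semigroup $(e^{-tA})_{t\ge0}$ is a self-adjoint contraction semigroup, hence admits a spectral decomposition $e^{-tA}=\int_{[0,\infty)}e^{-t\lambda}\,dE_\lambda$. The orthogonal projection onto $\ker A$ is exactly $E_{\{0\}}$, and by Proposition \ref{pro:spectralA} we have already identified $\ker A$ with the constant functions, so $E_{\{0\}}f=\int_{\R^N}f\,d\mu$ for every $f\in L^2_\mu(\R^N)$ (using that $\mu$ is a probability measure, so that the constant $1$ has norm $1$ in $L^2_\mu$).

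For part $(i)$ I would then split $f=\bigl(\int_{\R^N}f\,d\mu\bigr)+f_0$ with $f_0\in(\ker A)^\perp=\overline{\mathrm{Ran}\,A}$. On the constant component $e^{-tA}$ acts as the identity, giving the claimed limit, while on $f_0$ one has $\|e^{-tA}f_0\|_{L^2_\mu}^2=\int_{(0,\infty)}e^{-2t\lambda}\,d\|E_\lambda f_0\|^2\to 0$ as $t\to+\infty$ by dominated convergence, since $\|E_{\{0\}}f_0\|=0$. This is precisely the statement that the only fixed vectors are the constants together with the ergodicity already established in the previous proposition; alternatively one invokes the mean ergodic theorem to identify the limit and then upgrades from Cesàro to genuine convergence using analyticity (or self-adjointness). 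For part $(ii)$ I would use the spectral representation of the resolvent, $\lambda R(\lambda,-A)=\int_{[0,\infty)}\frac{\lambda}{\lambda+s}\,dE_s$ for $\lambda>0$; on $\ker A$ this is the identity, and on $(\ker A)^\perp$ one has $\bigl\|\lambda R(\lambda,-A)f_0\bigr\|^2=\int_{(0,\infty)}\bigl(\frac{\lambda}{\lambda+s}\bigr)^2 d\|E_sf_0\|^2\to 0$ as $\lambda\to0^+$, again by dominated convergence since the integrand is bounded by $1$ and tends to $0$ pointwise on $(0,\infty)$. Combining gives $(ii)$.

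An alternative, more self-contained route avoids spectral calculus: part $(ii)$ follows from $(i)$ and the Abelian (Tauberian-type) identity $\lambda R(\lambda,-A)f=\lambda\int_0^\infty e^{-\lambda t}e^{-tA}f\,dt$, since if $e^{-tA}f\to \ell$ in $L^2_\mu$ then $\lambda\int_0^\infty e^{-\lambda t}e^{-tA}f\,dt\to\ell$ as $\lambda\to0^+$ (a weighted-average argument: split the integral at a large time $T$, use the contraction bound on $[0,T]$ and the closeness to $\ell$ on $[T,\infty)$). So really only $(i)$ requires work, and that reduces to identifying $\lim_{t\to\infty}e^{-tA}f$ with the projection onto $\ker A$.

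The main obstacle is the passage from Cesàro convergence (ergodicity, already proved) to the honest $t\to+\infty$ limit in $(i)$: the ergodic theorem only gives $\frac1t\int_0^t e^{-sA}f\,ds\to\int f\,d\mu$, not convergence of $e^{-tA}f$ itself. I expect the cleanest fix is to exploit that $(e^{-tA})$ is a \emph{self-adjoint} (equivalently, analytic of angle $\pi/2$) contraction semigroup, so that $1$ is the only point of the spectrum of $e^{-tA}$ on the unit circle and the spectral-projection argument above applies verbatim; if one wishes to minimise machinery one can instead note that, by analyticity, $e^{-tA}f\in D(A)$ for $t>0$ and $\frac{d}{dt}\|e^{-tA}f_0\|^2=-2a_L(e^{-tA}f_0,e^{-tA}f_0)\le0$, so $t\mapsto\|e^{-tA}f_0\|$ is nonincreasing; its Cesàro mean tends to $0$ by $(i)$'s ergodic version applied to $f_0$, forcing the monotone quantity itself to $0$. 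Either way the remaining computations are routine.
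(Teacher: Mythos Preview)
Your proof is correct and considerably shorter than the paper's. You exploit the self-adjointness of $A$ (which follows from $a_L$ being a nonnegative symmetric closed form) and invoke the spectral theorem directly: the spectral projection $E_{\{0\}}$ onto $\ker A$ is identified via Proposition~\ref{pro:spectralA}, and dominated convergence handles the orthogonal complement. The paper instead argues by hand: it first shows $\|Le^{-tA}f\|_{L^2_\mu}\to 0$ via an energy estimate (proving $\Phi_f(t):=\|Le^{-tA}f\|^2$ and $\Phi_f'$ both lie in $L^1(0,\infty)$), then extracts a weakly convergent subsequence $e^{-t_nA}f\rightharpoonup g$, shows $Lg=0$ (hence $g$ is constant), identifies $g=\int f\,d\mu$ by invariance of $\mu$, and finally upgrades weak to strong convergence using $\|e^{-tA}f\|^2=\langle e^{-2tA}f,f\rangle$. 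Your spectral approach is cleaner here precisely because $A$ is self-adjoint; the paper's route is more robust in that most steps would survive in a non-symmetric setting. For part $(ii)$ the two approaches converge: your Abelian argument from the Laplace representation is exactly what the paper does. One small caveat in your alternative (spectral-free) argument for $(i)$: the ergodic theorem gives Ces\`aro convergence of the \emph{vectors} $e^{-sA}f_0$ to $0$, not of their \emph{norms}; to conclude that the nonincreasing quantity $\|e^{-tA}f_0\|^2$ tends to $0$ you should first write $\|e^{-tA}f_0\|^2=\langle e^{-2tA}f_0,f_0\rangle$ and take Ces\`aro means of the right-hand side.
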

\begin{proof}
$(i)$  Let $f\in D( A)$. Then,
\begin{align*}
\frac{d}{dt}\int_{\R^N}|e^{-tA}f|^2d\mu
= & 2\int_{\R^N}\left(e^{-tA}f\right)\left( {-}Ae^{-tA}f\right) d\mu
= -2\int_{\R^N}|Le^{-tA}f|^2d\mu, \quad t>0.
\end{align*}
Hence,
\begin{align*}
2\int_0^t\|Le^{-sA}f\|_{L^2_\mu(\R^N)}^2ds
+\|e^{-tA}f\|_{L^2_\mu(\R^N)}^2\leq \|f\|_{L^2_\mu(\R^N)}^2, \quad t>0.
\end{align*}
This implies that the function $t\mapsto \Phi_f(t):=\|Le^{-tA}f\|_{L^2_\mu(\R^N)}^2\in L^1(0,+\infty)$. Further, if $f\in D(A^2)$ we get
\begin{align*}
\frac{d}{dt}\Phi_f(t)
= & 2\int_{\R^N}\left(L\frac{d}{dt}e^{-tA}f\right)\left(Le^{-tA}f\right)d\mu
=  2\int_{\R^N} \left(Le^{-tA}{(-A)}f\right)\left(Le^{-tA}f\right)d\mu
\leq 2\Phi_f(t)+2\Phi_{\textcolor{red}{-}Af}(t),
\end{align*}
for any $t>0$. Hence, both $\Phi_f$ and $\Phi'_f$ belong to $L^1(0,+\infty)$, which implies that
\begin{align*}
\lim_{t\rightarrow+\infty}\Phi_f(t)=0.
\end{align*}
Since $-A$ generates an analytic semigroup, we infer that for any $f\in L^2_\mu(\R^N)$ we have $e^{-A}f\in D( A^n)$ for any $n\in\N$. Therefore,
\begin{align*}
\lim_{t\rightarrow+\infty}\|Le^{-tA}f\|_{L^2_\mu(\R^N)}^2
= \lim_{t\rightarrow+\infty}\|Le^{-(t-1)A}e^{-A}f\|_{L^2_\mu(\R^N)}^2
= \lim_{t\rightarrow+\infty}\|Le^{-tA}e^{-A}f\|_{L^2_\mu(\R^N)}^2=0,
\end{align*}
for any $f\in L^2_\mu(\R^N)$.

Let us fix $f\in L^2_\mu(\R^N)$. Since $(e^{-tA})_{t\geq0}$ is a semigroup of contractions in $L^2_\mu(\R^N)$ it follows that $\sup_{t>0}\|e^{-tA}f\|_{L^2_\mu(\R^N)}<+\infty$, and so there exists a sequence $(t_n)$ diverging to $+\infty$ and $g\in L^2_\mu(\R^N)$ such that $e^{-t_nA}f\rightarrow g$ weakly in $L^2_\mu(\R^N)$. We claim that $g$ is constant. Indeed, for any $\psi\in D(L)$ we have
\begin{align*}
\int_{\R^N}gL\psi \,d\mu
= & \lim_{n\rightarrow+\infty}\int_{\R^N}(e^{-t_nA}f) L\psi \,d\mu
=  \lim_{n\rightarrow+\infty}\int_{\R^N}(Le^{-t_nA}f)\psi \,d\mu
=0.
\end{align*}
This means that $g\in D(L^*)=D(L)$ and $L^*g=Lg=0$. From \cite[Theorem 9.1.17]{lor-16} it follows that $g$ is a constant function and the claim is proved. Further,
\begin{align*}
g=\int_{\R^n}g\,d\mu
=\lim_{n\rightarrow+\infty}\int_{\R^n}e^{-t_nA}f\,d\mu
=\int_{\R^n}f\,d\mu,
\end{align*}
since $\mu$ is the unique invariant measure for $(e^{-tA})_{t\geq0}$. Above computations reveal that for any sequence $(t_m)$ diverging as $m\rightarrow+\infty$ there exists a subsequence $(t_{k_m})\subset (t_m)$ such that $e^{-t_{k_m}A}f\rightarrow \int_{\R^N}fd\mu$ weakly in $L^2_\mu(\R^N)$ as $m\rightarrow +\infty$. Hence, we get that $e^{-tA}f\rightarrow \int_{\R^N}fd\mu$ weakly in $L^2_\mu(\R^N)$ as $t\rightarrow+\infty$. To prove that $e^{-tA}f\rightarrow \int_{\R^N}fd\mu$ in $L^2_\mu(\R^N)$ as $t\rightarrow+\infty$, we notice that
\begin{align*}
\|e^{-tA}f\|_{L^2_\mu(\R^N)}^2
= & \int_{\R^N}(e^{-tA}f)(e^{-tA}f)d\mu
= \int_{\R^N}(e^{-2t A}f) fd\mu\rightarrow \int_{\R^N}gfd\mu=\|g\|_{L^2_\mu(\R^N)}^2,
\end{align*}
which gives the thesis.

\vspace{3mm}
$(ii)$  The statement is a consequence of $(i)$ and of the representation of $R(\lambda,-A)$ as Laplace transform of $(e^{-tA})_{t\geq0}$. Let us fix $f\in L^2_\mu(\R^N)$, then, 
\begin{align*}
R(\lambda,-A)f=\int_0^\infty e^{-\lambda t}e^{-tA}f dt, \quad \lambda>0.
\end{align*}
We have
\begin{align*}
\lambda R(\lambda,-A)f-\int_{\R^N}fd\mu
= & \lambda\int_0^\infty e^{-\lambda t}\left((e^{-tA}f-\int_{\R^N}fd\mu\right)dt.
\end{align*}
Let us fix $\varepsilon>0$, from $(i)$ there exists $\tau\in(0,+\infty)$ such that
\begin{align*}
\left\|e^{-tA}f-\int_{\R^N}fd\mu\right\|_{L_\mu^2(\R^N)}<\varepsilon/2, \quad t\geq \tau.
\end{align*}
By applying the integral Minkowski  inequality we get
\begin{align*}
& \left\|\lambda R(\lambda,-A)f-\int_{\R^N}fd\mu\right\|_{L^2_\mu(\R^N)} \\
& \qquad \leq  \lambda\int_0^\infty e^{-\lambda t}\left\|e^{-tA}f-\int_{\R^N}fd\mu\right\|_{L^2_\mu(\R^N)}dt \\
& \qquad =  \lambda\int_0^\tau e^{-\lambda t}\left\|e^{-tA}f-\int_{\R^N}fd\mu\right\|_{L^2_\mu(\R^N)}dt 
+ \lambda\int_\tau^{\infty} e^{-\lambda t}\left\|e^{-tA}f-\int_{\R^N}fd\mu\right\|_{L^2_\mu(\R^N)}dt \\
& \qquad <  2\|f\|_{L^2_\mu(\R^N)}\lambda\int_0^\tau e^{-\lambda t}dt+\frac\varepsilon 2 
 \leq  2\tau\lambda \|f\|_{L^2_\mu(\R^N)}+\frac\varepsilon 2.
\end{align*}
Let us choose $\overline \lambda>0$ such that $2\tau\lambda \|f\|_{L^2_\mu(\R^N)}\leq \varepsilon/2$ for any $\lambda\in(0,\overline \lambda)$, it follows that
\begin{align*}
 \left\|\lambda R(\lambda,-A)f-\int_{\R^N}fd\mu\right\|_{L^2_\mu(\R^N)}
< \varepsilon, \quad \lambda\in(0,\overline \lambda).
\end{align*}
The arbitrariness of $\varepsilon$ gives the thesis.
\end{proof}


\subsection{Local and asymptotic positivity}

In this subsection we study some positiveness properties of the semigroup $(e^{-tA})_{t\geq0}$. Let us begin with some considerations.
From \cite[Corollary 7.4(1)]{gre-mug} and Proposition \ref{pro:spectralA} it follows that the semigroup $e^{-tA}$ is individually asymptotically positive, i.e.,
\begin{align*}
f\in L^2_\mu(\R^N)_+ \Rightarrow \lim_{t\rightarrow+\infty}{\rm dist}(e^{-tA}f,L^2_\mu(\R^N)_+)=0,
\end{align*}
where $L^2_\mu(\R^N)_+:=\{f\in L^2_{\mu}(\R^N):f\geq0\}$. However, nothing can be said neither about uniform eventual positivity nor eventual irreducibility, in the sense of \cite{dan-glu-ken1,dan-glu-ken2}, since it is not known whether $D(A^k)\subset L^\infty(\R^N)$ for some $k\in\N$ or   $e^{t_0A}(L^2_\mu(\R^N))\subset L^\infty(\R^N)$ for some $t_0>0$. Proposition \ref{pro:asymp_behav} can be seen as an intermediate result between individually asymptotically positivity and eventually irreducibility, and we formulate it as follows. We introduce the space
\begin{align*}
L^2_\mu(\R^N)_>:=\{f\in L^2_\mu(\R^N):\ f\geq0, \ \exists A\in \mathcal B(\R^N): {\rm Leb}(A)>0, \ f(x)>0 \ x\in A\},
\end{align*}
where ${\rm Leb}(A)$ denotes the Lebesgue measure for any $A\subset \mathcal B(\R^N)$.
We say that a strongly continuous semigroup of linear bounded operators $T(t)$ on $L^2_\mu(\R^N)$ is asymptotically irreducible if
\begin{align*}
f\in L^2_\mu(\R^N)_>\Rightarrow \lim_{t\rightarrow+\infty}{\rm dist}(T(t)f,L^2_\mu(\R^N)_>)=0.
\end{align*}
\begin{proposition}
The semigroup $(e^{-tA})_{t\geq0}$ is asymptotically irreducible.
\end{proposition}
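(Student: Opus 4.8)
The plan is to deduce asymptotic irreducibility directly from Proposition \ref{pro:asymp_behav}$(i)$, exploiting the fact that the limit $\int_{\R^N}f\,d\mu$ is a strictly positive constant whenever $f\in L^2_\mu(\R^N)_>$. First I would fix $f\in L^2_\mu(\R^N)_>$ and observe that, since $f\geq0$ and $f>0$ on a set of positive Lebesgue measure, Remark \ref{rem:2-1} (which guarantees $\mu>0$ on every compact set, hence $\mu>0$ a.e.\ on that set up to a Lebesgue-null set) yields that $c:=\int_{\R^N}f\,d\mu>0$. In particular the constant function $c\cdot\mathbf 1$ belongs to $L^2_\mu(\R^N)_>$ (indeed it is strictly positive everywhere). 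By Proposition \ref{pro:asymp_behav}$(i)$ we have $e^{-tA}f\to c\cdot\mathbf 1$ in $L^2_\mu(\R^N)$ as $t\to+\infty$.

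The second step is a general soft observation: for any closed set $S\subset L^2_\mu(\R^N)$ and any $g$ in $S$, the distance function $h\mapsto {\rm dist}(h,S)$ is $1$-Lipschitz, so ${\rm dist}(e^{-tA}f,S)\leq \|e^{-tA}f-g\|_{L^2_\mu(\R^N)}$ for every $g\in S$. Taking $S=\overline{L^2_\mu(\R^N)_>}$ and $g=c\cdot\mathbf 1\in L^2_\mu(\R^N)_>\subset S$ we obtain
\begin{align*}
{\rm dist}\big(e^{-tA}f,L^2_\mu(\R^N)_>\big)={\rm dist}\big(e^{-tA}f,\overline{L^2_\mu(\R^N)_>}\big)\leq \big\|e^{-tA}f-c\cdot\mathbf 1\big\|_{L^2_\mu(\R^N)}\xrightarrow[t\to+\infty]{}0,
\end{align*}
where the first equality holds because the distance to a set equals the distance to its closure. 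This is precisely the definition of asymptotic irreducibility, so the proof is complete. (If one prefers to work with ${\rm dist}$ to the non-closed set $L^2_\mu(\R^N)_>$ literally, note that $c\cdot\mathbf 1$ already lies in $L^2_\mu(\R^N)_>$, so no closure is needed at all.)

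The only point requiring a little care — and the one I would flag as the main, though mild, obstacle — is the verification that $\int_{\R^N}f\,d\mu>0$ for $f\in L^2_\mu(\R^N)_>$, i.e.\ that the measure $\mu$ charges the set $\{f>0\}$. This follows from Hypothesis $(H1)$(ii) together with Remark \ref{rem:2-1}: $\mu$ is continuous and strictly positive on compact sets, hence $\mu>0$ Lebesgue-a.e.\ on $\R^N$; consequently any set of positive Lebesgue measure has positive $\mu$-measure, and since $f>0$ there while $f\geq0$ everywhere, $\int_{\R^N}f\,d\mu\geq\int_{\{f>0\}}f\,d\mu>0$. Everything else is a one-line consequence of the already-established $L^2$-convergence in Proposition \ref{pro:asymp_behav}$(i)$ and of the elementary Lipschitz property of the distance-to-a-set function.
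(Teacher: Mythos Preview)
Your proof is correct and follows essentially the same approach as the paper: the paper's proof is a one-line remark that the statement follows by combining Proposition \ref{pro:asymp_behav} with the fact that $\mu$ is equivalent to the Lebesgue measure, and you have simply unpacked this in detail (the equivalence of measures is exactly what guarantees $\int_{\R^N}f\,d\mu>0$ for $f\in L^2_\mu(\R^N)_>$, and then the $L^2$-convergence of Proposition \ref{pro:asymp_behav}$(i)$ to the strictly positive constant does the rest).
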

\begin{proof}
The statement follows combining Proposition \ref{pro:asymp_behav} and the fact that $\mu$ is equivalent to the Lebesgue measure. 
\end{proof}

The following results are inherit from the abstract results in \cite[Theorems 3.3 \& 4.4]{aro} applied to our setting.   In particular, in \cite{aro}, criteria for individual and uniform local eventual positivity for $C_0$-semigroups are proved. More precisely, given three Banach lattices $E,F,G$,   a $C_0$-semigroup $\left(e^{tA}\right)_{t\geq0}$ with generator $A:D(A)\subset F\to F$ and two positive operators $S\in\mathcal{L}(F,G), T\in\mathcal{L}(E,F)$, positivity of $Se^{tA}Tf$ for large times $t>0$ whenever $f$ is positive is stated.  
We provide explicit proofs which follow the lines of that of the quoted theorems, but are simplified since we deal with concrete objects.


\begin{proposition}
\label{prop:loc_pos_1}
Assume that there exists $n\in\N$ such that $D(A^n)\subset L^\infty_{\rm loc}(\R^N)$. Then, the semigroup $(e^{-tA})_{t\geq0}$ is locally individually eventually positive, i.e., for any $f\in L^2_\mu(\R^N)\cap L^2_\mu(K)_>$ and any $K\subset \R^N$ compact set, there exist $c>0$ and $t_0>0$ such that
\begin{align}
\label{local_ind_positivity}
e^{-tA}(\chi_Kf)(x)\geq c, \quad t\geq t_0, \quad \textup{a.e. in }x\in K. 
\end{align}
\end{proposition}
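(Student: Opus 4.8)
The plan is to exploit the asymptotic description of $(e^{-tA})_{t\geq0}$ obtained in Proposition \ref{pro:asymp_behav}(i), namely that $e^{-tA}f\to \int_{\R^N}f\,d\mu$ in $L^2_\mu(\R^N)$ as $t\to+\infty$, together with a smoothing/bootstrap argument based on the analyticity of the semigroup and the hypothesis $D(A^n)\subset L^\infty_{\rm loc}(\R^N)$, in order to upgrade this $L^2_\mu$-convergence to convergence in $L^\infty(K)$ for every compact $K$. Once that upgrade is in place, the conclusion is immediate: fix $f\in L^2_\mu(\R^N)\cap L^2_\mu(K)_>$ with $f\geq0$; then $\int_{\R^N}\chi_K f\,d\mu =: m>0$ because $\mu$ is equivalent to Lebesgue measure and $f>0$ on a set of positive measure inside $K$, so for $t$ large enough $e^{-tA}(\chi_K f)$ is within $m/2$ of the constant $m$ uniformly on $K$, hence bounded below by $c:=m/2>0$ on $K$.

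The key steps, in order, are as follows. First, I would recall that because $-A$ generates an analytic semigroup, $e^{-tA}g\in D(A^k)$ for every $k\in\N$, every $t>0$ and every $g\in L^2_\mu(\R^N)$, with the analytic-semigroup bound $\|A^k e^{-tA}\|_{\mathcal L(L^2_\mu)}\leq C_k t^{-k}$ for $t\in(0,1]$. Second, I would set $h_t:=e^{-tA}(\chi_K f)-m$ where $m=\int_{\R^N}\chi_K f\,d\mu$; since constants are fixed by $e^{-tA}$ (Proposition \ref{pro:spectralA} and \eqref{teo:fond_calcolo}), $h_t=e^{-tA}(\chi_K f-m)$, and by Proposition \ref{pro:asymp_behav}(i), $\|h_t\|_{L^2_\mu(\R^N)}\to0$. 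Third, I would write $h_{t}=e^{-A}e^{-(t-1)A}(\chi_K f-m)$ for $t\geq1$ and use the $\mathcal L(L^2_\mu)$-bounds on $A^k e^{-A}$ to get $\|A^k h_t\|_{L^2_\mu(\R^N)}\leq C_k\|h_{t-1}\|_{L^2_\mu(\R^N)}\to0$ for every $k$; in particular, choosing $k$ large enough that $D(A^n)$ (and the graph norm of $A^n$) controls the $L^\infty_{\rm loc}$-norm, the closed-graph theorem applied to the continuous inclusion $D(A^n)\hookrightarrow L^\infty_{\rm loc}(\R^N)$ gives $\|h_t\|_{L^\infty(K)}\leq C_{K}\big(\|h_t\|_{L^2_\mu(\R^N)}+\|A^n h_t\|_{L^2_\mu(\R^N)}\big)\to0$. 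Fourth, unwinding the definition of $h_t$, this says exactly that $e^{-tA}(\chi_K f)\to m$ uniformly on $K$, and the lower bound \eqref{local_ind_positivity} follows with $c=m/2$ and $t_0$ chosen so that $\|h_t\|_{L^\infty(K)}<m/2$ for $t\geq t_0$.

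I expect the main obstacle to be the rigorous passage from $L^2_\mu$-convergence to $L^\infty(K)$-convergence, i.e. making precise that the hypothesis $D(A^n)\subset L^\infty_{\rm loc}(\R^N)$ really yields a quantitative estimate $\|v\|_{L^\infty(K)}\leq C_K\|v\|_{D(A^n)}$: this requires the closed-graph theorem (or, equivalently, that $D(A^n)$ with its graph norm is a Banach space continuously embedded in the Fréchet space $L^\infty_{\rm loc}$), plus the observation that the analytic smoothing is what allows us to apply this to $h_t=e^{-tA}(\chi_K f-m)$ even though $\chi_K f$ itself need not lie in any $D(A^k)$. A secondary technical point is to justify that $m>0$ strictly, which uses Remark \ref{rem:2-1} (equivalence of $\mu$ with Lebesgue measure on compacts) together with $f\in L^2_\mu(K)_>$; and one should note that $\chi_K f\in L^2_\mu(\R^N)$ since $K$ is compact and $\mu(K)<\infty$, so the semigroup may indeed be applied to it. Everything else is routine.
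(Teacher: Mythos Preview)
Your proposal is correct and follows essentially the same approach as the paper: both use analyticity to smooth into $D(A^n)$, upgrade the $L^2_\mu$-convergence $e^{-tA}(\chi_Kf)\to\int_{\R^N}\chi_Kf\,d\mu$ from Proposition~\ref{pro:asymp_behav} to uniform convergence on $K$ via the hypothesis $D(A^n)\subset L^\infty_{\rm loc}$, and then conclude from strict positivity of the limit. The only difference is that the paper phrases this in the abstract $G_{\chi_K}$-framework and defers the bootstrap step to \cite[Theorem~3.3]{aro}, whereas you spell out the closed-graph argument directly; your version is more self-contained but mathematically identical.
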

\begin{proof}
Let $K\subset \R^N$ be a compact set. Let us prove that we can apply to $(e^{-tA})_{t\geq0}$ the procedure in \cite[Theorem 3.3]{aro}, with $E=F=G=L^2_\mu(\R^N)$, and $S f=Tf=\chi_Kf$ for any $f\in L^2_\mu(\R^N)$. Clearly, $T'f=\chi_kf$ for any $f\in L^2_\mu(\R^N)$.

We recall that
$G_{\chi_K}:=\left\{f\in G|\ \!\exists c>0\ \! : \ \! |f| \leq c\chi_K\right\}$ is a Banach space if endowed with the norm $\|f\|_{G_{\chi_K}}:=\inf\{c\geq 0 :\ |f|\leq c\chi_K\}$ for any $f\in G_{\chi_K}$. Let us notice that $f\in G_{\chi_K}$ if and only if $f$ is essentially bounded on $K$ and $f\equiv0$ a.e. on $\R^N\smallsetminus K$. The analyticity of $A$ implies that  $e^{-tA}f\subset D(A^k)$ for any $f\in L^2_\mu(\R^N)$, any $k\in\N$ and any $t>0$, hence for any $f\in L^2_\mu(\R^N)$ we have $Se^{-tA}f\in G_{\chi_K}$ for any $t>0$. 

Arguing as in \cite[Theorem 3.3]{aro} we infer that $Se^{tA}Tf\rightarrow SP_\infty Tf$ in $G_{\chi_K}$ as $t\rightarrow+\infty$ for any $f\in L^2_\mu(\R^N)$, i.e., 
\begin{align*}
\lim_{t\rightarrow+\infty}\inf\{c>0:|Se^{tA}Tf- SP_\infty Tf|\leq c\chi_K \textup{ a.e. in }K\}=0, \quad f\in L^2_\mu(\R^N).
\end{align*}
Let us consider $f\in L^2_\mu(\R^N)\cap L_\mu^2(K)_>$ and let us notice that
\begin{align*}
SP_\infty Tf=\chi_K\|f\|_{L_\mu^1(K)}>0.
\end{align*}
If $t_0>0$ satisfies $\inf\{c>0:|Se^{tA}Tf- SP_\infty Tf|\leq c\chi_K \textup{ a.e. in }K\}\leq 2^{-1}\|f\|_{L^1_\mu(\R^N)}$  for any $t\geq t_0$, by combining the above computations we get
\begin{align*}
Se^{-tA}Tf=Se^{tA}Tf- SP_\infty Tf+SP_\infty T f\geq \frac{\|f\|_{L^1_\mu(\R^N)}}2\chi_K, \quad \textup{a.e. in }K, \textup{ for any }t\geq t_0.
\end{align*}
From the definition of $S$ and $T$ \eqref{local_ind_positivity} follows.
\end{proof}

If $0$ is a simple pole for $\sigma(-A)$ then we can improve the result of Proposition \ref{prop:loc_pos_1}.

\begin{proposition}
\label{prop:loc_pos_2}
Assume that there exists $n\in\N$ such that $D(A^n)\subset L^\infty_{\rm loc}(\R^N)$ and that $0$ is a simple pole for $\sigma(-A)$. Then, the semigroup $(e^{-tA})_{t\geq0}$ is locally uniformly eventually positive, i.e., for any $K\subset \R^N$ compact set there exists $t_0>0$ such that for any $f\in L^2_\mu(\R^N) \cap L^2_\mu(K)_>$ there exists $c>0$ which satisfies
\begin{align*}
e^{-tA}(\chi_Kf)(x)\geq c, \quad  t\geq t_0, \ \textup{a.e. in }x\in K.
\end{align*}
\end{proposition}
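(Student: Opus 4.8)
The plan is to make the convergence rate in the proof of Proposition~\ref{prop:loc_pos_1} independent of $f$. As there, take $E=F=G=L^2_\mu(\R^N)$ and $Sg=Tg=\chi_Kg$, and let $P_\infty g:=\int_{\R^N}g\,d\mu$, which is the orthogonal projection of $L^2_\mu(\R^N)$ onto the constants (as $\mu$ is a probability measure) and, by Proposition~\ref{pro:spectralA}, coincides with the spectral projection of $A$ at the eigenvalue $0$. The extra information carried by the hypothesis ``$0$ is a simple pole of $\sigma(-A)$'' is that this eigenvalue is isolated: since $a_L$ is symmetric we have $A=A^*\ge0$, and $0$ isolated in $\sigma(A)\subset[0,\infty)$ yields $\delta>0$ with $\sigma(A)\setminus\{0\}\subset[\delta,\infty)$, whence by the spectral theorem
\[
\|e^{-tA}(I-P_\infty)\|_{\mathcal{L}(L^2_\mu(\R^N))}\le e^{-\delta t},\qquad t\ge0 .
\]

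I would then record two smoothing estimates. From $D(A^n)\subset L^\infty_{\rm loc}(\R^N)$ the inclusion $D(A^n)\hookrightarrow L^\infty(K)$ is continuous by the closed graph theorem, and $e^{-A}\in\mathcal{L}(L^2_\mu(\R^N),D(A^n))$ by analyticity of the semigroup; hence there is $C_K>0$ with $\|e^{-A}g\|_{L^\infty(K)}\le C_K\|g\|_{L^2_\mu(\R^N)}$ for every $g$. Dualizing in $L^2_\mu(\R^N)$ and using that $e^{-A}$ is self-adjoint, for every $h\in L^2_\mu(\R^N)$ with ${\rm supp}\,h\subset K$ and every $g$,
\[
|\langle e^{-A}h,g\rangle|=|\langle h,e^{-A}g\rangle|\le\|h\|_{L^1_\mu(K)}\,\|e^{-A}g\|_{L^\infty(K)}\le C_K\|h\|_{L^1_\mu(K)}\|g\|_{L^2_\mu(\R^N)},
\]
so that $\|e^{-A}h\|_{L^2_\mu(\R^N)}\le C_K\|h\|_{L^1_\mu(K)}$ whenever ${\rm supp}\,h\subset K$.

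Now fix a compact $K$ and $f$ as in the statement, and set $m:=\int_Kf\,d\mu=\|\chi_Kf\|_{L^1_\mu(K)}>0$, the strict positivity following from the equivalence of $\mu$ with the Lebesgue measure. For $t\ge2$ the semigroup law together with $e^{-(t-2)A}P_\infty=P_\infty$ (recall $e^{-sA}\mathbf 1=\mathbf 1$) gives $e^{-(t-2)A}=e^{-(t-2)A}(I-P_\infty)+P_\infty$, hence
\[
e^{-tA}(\chi_Kf)=e^{-A}\,e^{-(t-2)A}(I-P_\infty)\,e^{-A}(\chi_Kf)+e^{-A}P_\infty e^{-A}(\chi_Kf),
\]
and since $\mu$ is invariant (Proposition~\ref{pro:spectralA}) the last term equals $P_\infty(\chi_Kf)$, i.e.\ the constant function $m$. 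Multiplying by $\chi_K$, taking $L^\infty(K)$-norms and chaining the exponential decay with the two smoothing bounds (and $\|I-P_\infty\|\le1$),
\[
\|\chi_K e^{-tA}(\chi_Kf)-m\,\chi_K\|_{L^\infty(K)}\le C_K^2\,e^{-\delta(t-2)}\,m,\qquad t\ge2 ,
\]
so $e^{-tA}(\chi_Kf)\ge m\bigl(1-C_K^2e^{-\delta(t-2)}\bigr)$ a.e.\ on $K$. Choosing $t_0>2$ so large that $C_K^2e^{-\delta(t_0-2)}\le\tfrac12$ --- a choice depending only on $K$ --- we obtain $e^{-tA}(\chi_Kf)\ge m/2=:c>0$ a.e.\ on $K$ for all $t\ge t_0$, which is the claim.

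The crux, and the only real difference with Proposition~\ref{prop:loc_pos_1}, is that the error term must be controlled by $\|\chi_Kf\|_{L^1_\mu(K)}$ and not merely by $\|\chi_Kf\|_{L^2_\mu(\R^N)}$: only the $L^1$-norm matches the size $m$ of the limit $P_\infty(\chi_Kf)$, so that the exponentially small but otherwise $f$-independent error can be absorbed into $m/2$ uniformly in $f$. This is exactly what forces the extra $e^{-A}$-smoothing and the $L^1$--$L^2$ duality estimate; with these in hand the argument is just a quantitative version of the scheme of \cite[Theorem~4.4]{aro}.
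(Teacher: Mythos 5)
Your proof is correct, and it reaches the same conclusion (a time $t_0$ depending only on $K$, with $c=\tfrac12\|\chi_K f\|_{L^1_\mu(K)}$) by a genuinely more hands-on route than the paper. The paper's proof gets the key uniform bound $\|Se^{-tA}(I-P_\infty)T\|_{L^1_\mu(K)\to L^\infty(K)}\to 0$ by citing abstract results: eventual norm continuity of the analytic semigroup plus \cite[Theorem 2.7]{Th98} to obtain $e^{-tA}\to P_\infty$ in operator norm, and then the machinery of \cite[Theorem 4.4]{aro} with the auxiliary spaces $E^{T'}$ and $G_{\chi_K}$. You instead exploit that $a_L$ is symmetric, so $A=A^*\ge 0$, and that a pole of the resolvent is an isolated spectral point: the spectral theorem then gives the explicit decay $\|e^{-tA}(I-P_\infty)\|\le e^{-\delta t}$, and you manufacture the $L^1_\mu(K)\to L^\infty(K)$ bound yourself by sandwiching, $e^{-tA}=e^{-A}\,e^{-(t-2)A}(I-P_\infty)\,e^{-A}+P_\infty$ on functions supported in $K$, using the closed-graph/analyticity smoothing $\|e^{-A}g\|_{L^\infty(K)}\le C_K\|g\|_{L^2_\mu}$ together with its dual estimate $\|e^{-A}h\|_{L^2_\mu}\le C_K\|h\|_{L^1_\mu(K)}$ (self-adjointness of $e^{-A}$). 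All the ingredients you invoke are available in the paper's setting ($\ker A$ = constants by Proposition \ref{pro:spectralA}, $e^{-sA}\mathbf{1}=\mathbf{1}$, $\mu$ equivalent to Lebesgue measure), so the argument is complete; what it buys is a self-contained proof with an explicit exponential rate and no appeal to \cite{Th98} or \cite{aro}, at the price of relying on self-adjointness, whereas the paper's abstract route would survive in non-symmetric situations where no spectral gap estimate via the spectral theorem is available.
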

\begin{proof}
Since $0$ is a simple pole for $\sigma(-A)$ and $(e^{-tA})_{t\geq0}$ is eventually norm continuous (this fact follows from the analyticity of $(e^{-tA})_{t\geq0}$, se e.g. \cite[Chapter II, Section 4.c]{EnNa00}), from \cite[Theorem 2.7]{Th98} we have $e^{-tA}\rightarrow P_\infty$ in operator norm as $t\rightarrow+\infty$.

Let us fix a compact set $K$, let $E:=L^2_\mu(K)$ and let $F,G,S,T$ be as in the proof of Proposition \ref{prop:loc_pos_1}. Arguing as \cite[Theorem 4.4]{aro} we infer that $Se^{-tA}(I-P_\infty)T\rightarrow0$ in $\mathcal L(E^{T'},G_{\chi_K})$, where $E^{T'}$ is the closure of $E$ with respect to the norm 
\begin{align*}
\|f\|_{E^{T'}}:=\int_K|f|d\mu, \quad f\in L^2_\mu(K). 
\end{align*}
Hence, for any $\varepsilon>0$  there exists $t_0>0$ such that
\begin{align*}
\|Se^{-tA}(I-P_\infty)Tf\|_{G_{\chi_K}}\leq \varepsilon \|f\|_{E^{T'}}=\varepsilon\|f\|_{L^1_\mu(K)}, \quad f\in L^2_\mu(K).
\end{align*}
Let us choose $f\in L^2_\mu(K)_>$, then $SP_\infty Tf=\chi_K\|f\|_{L^1_\mu(K)}$ and we have
\begin{align*}
Se^{-tA}Tf(x)
= & SP_\infty Tf(x)+ Se^{-tA}(I-P_\infty)Tf(x)
\geq (1-\varepsilon)\|f\|_{L^1_\mu(K)}\chi_K(x), \quad \textup{a.e. in }x\in K, \ t\geq t_0.
\end{align*}
By choosing $\varepsilon=1/2$ we get the thesis with $c=2^{-1}\|f\|_{L^1_\mu(K)}$.
\end{proof}

\begin{remark}
\label{rmk:cond_loca_pos} In order to apply the above results, it remains then to provide sufficient conditions which ensure $D(A^n)\subset L^\infty_{\rm loc}(\R^N)$ for some $n\in\N$ and that $0$ is a simple pole for $\sigma(-A)$. 

\noindent To get the first condition, from the Sobolev embeddings, it is enough to prove that $D(A^n)\subset H^{j}_{\rm loc}(\R^N)$ for some $j>N/2$. Let us notice that if $\mu\in C^3(\R^N)$, by applying the local elliptic regularity and the explicit formula \eqref{explicit_formula_A} of $A$ it follows that $D(A)\subset H^3_{\rm loc}(\R^N)$. Analogously, we can prove that if $\mu\in C^{4n-1}(\R^N)$ it follows that $D(A^n)\subset H_{\rm loc}^{4n-1}(\R^N)$ for any $n\in\N$. Hence, if $n\in\N$ satisfies $4n-1>N/2$ and $\mu\in C^{4n-1}(\R^N)$, it follows that $D(A^n)\subset H^{4n-1}_{\rm loc}(\R^N)\subset L^\infty_{\rm loc}(\R^N)$.

\noindent As far as the second request is concerned, let us notice that if the embedding $D(L)\subset L^2_\mu(\R^N)$ is compact, then the operator $-A$ has compact resolvent. Further, applying \cite[Chapter IV, Corollary 1.19]{EnNa00} it follows that $0$ is a pole for $\sigma(-A)$ and Proposition \ref{pro:spectralA} ensures that the multiplicity of ${\rm rg P_\infty}=1$, i.e., $0$ is a simple pole for $\sigma(-A)$. By Theorem 3.1 and the successive Example in \cite{Ho81},  it is enough that $\mu$ decays at infinity as $e^{-c|x|^\alpha}$ for some $\alpha>1$ and $c>0$ to obtain that the embedding $H^k_\mu(\R^N)\subset L^2_\mu(\R^N)$ is compact for any $k\in\N$, $k\geq1$.  To conclude, since $D(L)\subset H^1_\mu(\R^N)$, a sufficient condition for $0$ of being a simple pole for $\sigma(-A)$ is that the embedding $H^1_\mu(\R^N)\subset L^2_\mu(\R^N)$ is compact and therefore that $\mu$ decays at infinity as $e^{-c|x|^\alpha}$ for some $\alpha>1$ and $c>0$.
\end{remark}

Taking Remark \ref{rmk:cond_loca_pos}  into account, we provide an example of a measure $\mu$ which satisfies the assumptions of Proposition \ref{prop:loc_pos_2}.

\begin{example}\label{ex:measure}
The measure 
\begin{align*}
\mu(x):=K\exp\left(-(c_1+c_2|x|^2)^m\right), x\in \R^N,
\end{align*}
where $c_1,c_2,m$ are positive constants with $m>1/2$ and $K:=\|\mu\|_{L^1(\R^N)}^{-1}$ is a normalizing factor, satisfies the assumptions of Proposition \ref{prop:loc_pos_2}.
\end{example}

\section{Weighted Rellich's inequality}\label{Rel-ineq} 

The aim of this section is to prove a weighted Rellich's inequality with respect to the measure $\mu$. We define
\[U:=\frac14\left|\frac{\nabla\mu}{\mu}\right|^2-\frac12\frac{\Delta\mu}{\mu},\]
denote by $C_0:=\left(\frac{N-2}{2}\right)^2$
and consider the following assumptions on $\mu$.\\

\textbf{Hypothesis} $(H2)$ 
\begin{itemize}
\item[(i)] $\sqrt{\mu}\in H^1_{\rm loc}(\R^N),\Delta\mu\in L^1_{\rm loc}(\R^N)$;
\item[(ii)] there exists a $R_0>0$ such that $|x|^2U(x)\leq\frac14\frac{1}{|\log|x||^2}\quad\forall\,x\in B_{R_0}$;
\item[(iii)] $U$ is bounded from above in $\R^N\setminus B_R$ for every $R>0$.
\end{itemize}

We note that if $\mu$ satisfies $(H1)(i)$ then $\sqrt{\mu}\in H^1_{\rm loc}(\R^N)$. In fact,  ${\mu}\in H^1_{\rm loc}(\R^N)$ implies $\mu\in L^1_{\rm loc}(\R^N)$ and $\nabla\mu\in L^2_{\rm loc}(\R^N,\R^N)$ and since $\frac{\nabla \mu}{\mu}\in L^r_{\rm loc}(\R^N,\R^N)$ for some $r>N$, it follows that $\frac{\nabla \mu}{\mu}\in L^2_{\rm loc}(\R^N,\R^N)$. Then $\sqrt\mu\in L^2_{\rm loc}(\R^N)$ and 
$$\int_K|\nabla \mu^{\frac{1}{2}}|^2dx=\frac{1}{4}\int_K\frac{|\nabla \mu|^2}{\mu}dx
\le \frac{1}{4}\left(\int_K\left|\frac{\nabla \mu}{\mu}\right|^2dx\right)^{\frac{1}{2}}
\left(\int_K|\nabla \mu|^2dx\right)^{\frac{1}{2}}<\infty$$
for every compact set $K\subset \R^N$.

\begin{theorem}
Assume $N\geq5$ and Hypothesis $(H2)$.  Then,
\begin{enumerate}[(i)]
\item For any $\varepsilon>0$ and any $u\in C_c^\infty(\R^N)$ we have
\begin{align}\label{weighted-Rellich0}
\left((C_0-1)^2-\varepsilon\right)\int_{\R^N}\frac{|u(x) |^2}{|x|^4}d\mu \leq \int_{\R^N}|Lu(x)|^2d\mu +\frac{\left((C_0-1)C_1\right)^2}{\varepsilon}\int_{\R^N}|u(x) |^2d\mu .
\end{align}
\item For any $u\in C_c^\infty(\R^N)$ we have
\begin{align}\label{weighted_Rellich}
(C_0-1)^2\int_{\R^N}\frac{|u(x) |^2}{|x|^4}d\mu 
\leq & \int_{\R^N}|Lu(x) |^2d\mu+\frac{2(C_0-1)C_1}{C_0}\int_{\R^N}|\nabla u(x)|^2d\mu  \\\nonumber& 
+\frac{2(C_0-1)(C_1)^2}{C_0}\int_{\R^N}|u(x) |^2d\mu .
\end{align}
\end{enumerate} 
\end{theorem}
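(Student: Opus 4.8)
The plan is to reduce the weighted Rellich inequality for $L$ to the classical (unweighted) Rellich–type inequality on $\R^N$ via the ground-state substitution $u \mapsto v := \sqrt{\mu}\,u$, which transforms $L$ on $L^2_\mu(\R^N)$ into a Schrödinger-type operator on $L^2(\R^N,dx)$. Concretely, a direct computation shows that for $u \in C_c^\infty(\R^N)$,
\[
\sqrt{\mu}\,Lu = \Delta v - U v, \qquad v = \sqrt{\mu}\,u,
\]
with $U = \tfrac14\left|\tfrac{\nabla\mu}{\mu}\right|^2 - \tfrac12\tfrac{\Delta\mu}{\mu}$ as defined before the statement; here Hypothesis $(H2)(i)$ is exactly what makes this manipulation legitimate ($\sqrt{\mu}\in H^1_{\rm loc}$, $\Delta\mu\in L^1_{\rm loc}$). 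Since $\int_{\R^N}|Lu|^2\,d\mu = \int_{\R^N}|\sqrt{\mu}\,Lu|^2\,dx = \int_{\R^N}|\Delta v - Uv|^2\,dx$ and $\int_{\R^N}\frac{|u|^2}{|x|^4}\,d\mu = \int_{\R^N}\frac{|v|^2}{|x|^4}\,dx$, the whole inequality is recast as an inequality in the flat space for $v$, where $v$ need no longer be compactly supported but lies in a suitable weighted Sobolev class.

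Next I would establish the Euclidean estimate. Expanding $\int |\Delta v - Uv|^2 = \int |\Delta v|^2 - 2\int U v \Delta v + \int U^2 v^2$, the central term is $\int |\Delta v|^2\,dx$, for which the classical Rellich inequality on $\R^N$ (valid for $N\geq 5$) gives $\int_{\R^N}\frac{v^2}{|x|^4}\,dx \le \frac{1}{C_0^2}\int_{\R^N}|\Delta v|^2\,dx$ with $C_0 = \left(\tfrac{N-2}{2}\right)^2$; more useful here is the sharpened improved version involving the gradient, i.e. an inequality of the form $(C_0 - 1)^2\int \frac{v^2}{|x|^4} \le \int |\Delta v|^2 + (\text{l.o.t. in } \nabla v, v)$, which accounts for the shift $C_0 \mapsto C_0 - 1$ appearing in the constant. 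The cross term $-2\int U v\Delta v$ is controlled using $(H2)(ii)$ near the origin — where $|x|^2 U \le \tfrac14|\log|x||^{-2}$ kills the potential singularity against the $|x|^{-4}$ weight after a Hardy/Rellich-type logarithmic refinement — and $(H2)(iii)$ away from the origin, where $U$ is bounded above on each annulus, so the contribution is absorbed into $\|\nabla v\|_2^2 + \|v\|_2^2$ (equivalently $\|u\|_{H^1_\mu}^2$). Finally one translates back: $\int|\nabla v|^2\,dx = \int|\nabla u|^2\,d\mu + \int U|u|^2\,d\mu$, and the same upper bounds on $U$ recover $\|u\|_{H^1_\mu}^2$ on the right-hand side, giving $(ii)$; part $(i)$ then follows from $(ii)$ by a Cauchy–Schwarz / Young's inequality step on the gradient term, trading $\frac{2(C_0-1)C_1}{C_0}\int|\nabla u|^2\,d\mu$ against $\varepsilon\int\frac{|u|^2}{|x|^4}\,d\mu$ using the weighted Hardy inequality and re-absorbing, with the constant $C_1$ tracking the logarithmic correction factor from $(H2)(ii)$.

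The main obstacle I expect is the sharp treatment of the singular cross term $\int U v \Delta v\,dx$ near the origin: one cannot simply bound $U$ by a constant there, and the naive estimate $|x|^2 U \le$ const would destroy optimality of $(C_0-1)^2$. The resolution is the logarithmic refinement in $(H2)(ii)$ together with the corresponding improved Rellich inequality with a $|\log|x||^{-2}$-weighted remainder term — this is precisely the device that lets the $U$-contribution be swallowed while leaving the leading constant untouched, and getting the bookkeeping of these two competing $|x|^{-4}$-type terms exactly right (so that the net coefficient is $(C_0-1)^2$ rather than something smaller) is the delicate part. A secondary technical point is justifying all the integrations by parts for $v = \sqrt{\mu}\,u$ with merely $H^1_{\rm loc}$ regularity of $\sqrt{\mu}$; this is handled by a standard cut-off and mollification argument, approximating and passing to the limit using the local bounds on $\mu$ from Remark \ref{rem:2-1}.
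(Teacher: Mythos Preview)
Your approach via the ground-state substitution $v=\sqrt\mu\,u$ is conceptually natural but genuinely different from the paper's, and under the weak hypotheses $(H2)$ it runs into real obstacles that the paper's route avoids.

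\textbf{The integrability gap.} Hypothesis $(H2)$ gives only an \emph{upper} bound on $U$; there is no lower bound, and from $(H2)(i)$ one has merely $U\in L^1_{\mu,\rm loc}$, not $L^2_{\mu,\rm loc}$. Consequently, even for $u\in C_c^\infty(\R^N)$ the term $\int U^2v^2\,dx=\int U^2u^2\,d\mu$ may be infinite, and the expansion $\int|\Delta v-Uv|^2=\int|\Delta v|^2-2\int Uv\,\Delta v+\int U^2v^2$ is not justified. For the same reason $\Delta v=\sqrt\mu\,Lu+Uv$ need not lie in $L^2(\R^N)$, so $v$ is only in $H^1$ with compact support, not in $H^2$, and the classical Rellich inequality for $\int|\Delta v|^2$ is not available. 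Your proposed logarithmic-remainder Rellich inequality is not a standard off-the-shelf tool and would itself require a proof tailored to exactly the growth in $(H2)(ii)$; the bookkeeping you flag as ``delicate'' is in fact the whole content of the theorem.

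\textbf{The constants in (i).} Your claim that $(i)$ follows from $(ii)$ does not yield the stated constants: any way of eliminating $\int|\nabla u|^2\,d\mu$ from the right-hand side of $(ii)$ via $\int|\nabla u|^2\,d\mu=-\int u\,Lu\,d\mu$ and Young's inequality produces a coefficient strictly larger than $1$ in front of $\int|Lu|^2\,d\mu$, whereas $(i)$ has coefficient exactly $1$ there. In the paper both $(i)$ and $(ii)$ are derived from a common intermediate inequality, not one from the other.

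\textbf{What the paper actually does.} The paper stays entirely in $L^2_\mu$ and never passes to flat space. It quotes the weighted Hardy inequality $C_0\int\frac{|\varphi|^2}{|x|^2}\,d\mu\le\int|\nabla\varphi|^2\,d\mu+C_1\int|\varphi|^2\,d\mu$ from \cite{can-gre-rha-tac} (this is where $(H2)$ is used, and where the constant $C_1$ originates), applies it to $\varphi=u(|x|^2+\delta)^{-1/2}$, and integrates by parts once to produce $-\int Lu\,\frac{u}{|x|^2+\delta}\,d\mu$. This gives
\[
(C_0-1)\int\frac{|u|^2}{(|x|^2+\delta)^2}\,d\mu\le -\int\frac{Lu\cdot u}{|x|^2+\delta}\,d\mu+C_1\int\frac{|u|^2}{|x|^2+\delta}\,d\mu,
\]
and Young's inequality on the first term on the right, optimised at $\eta=C_0-1$, yields the key estimate
\[
(C_0-1)^2\int\frac{|u|^2}{(|x|^2+\delta)^2}\,d\mu\le\int|Lu|^2\,d\mu+2(C_0-1)C_1\int\frac{|u|^2}{|x|^2+\delta}\,d\mu.
\]
From here $(i)$ follows by a second Young inequality on the last term, and $(ii)$ by applying weighted Hardy once more; Fatou's lemma removes $\delta$. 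No regularity of $\sqrt\mu$ beyond what is needed for the weighted Hardy inequality is used, $U$ never appears explicitly, and the shift $C_0\mapsto C_0-1$ comes transparently from the extra $\int\frac{|u|^2|x|^2}{(|x|^2+\delta)^3}\,d\mu$ term in the gradient computation.
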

\begin{proof}
Under the assumption (H2), by \cite[Theorem 3.3]{can-gre-rha-tac}, the following weighted Hardy inequality holds 
\begin{align}
\label{weigthed_Hardy}
C_0\int_{\R^N}\frac{|\varphi |^2}{|x|^2}d\mu
\leq \int_{\R^N}|\nabla \varphi |^2d\mu+C_1\int_{\R^N}|\varphi |^2d\mu, \quad \varphi\in 
H^1_\mu(\R^N) 
\end{align}
for some constant $C_1>0$ and $C_0=\left(\frac{N-2}{2}\right)^2$ is the best constant in Hardy's inequality.  For $u\in C_c^\infty(\R^N)$ let
us apply \eqref{weigthed_Hardy} to the function $\varphi(x)=u(x)(|x|^2+\delta)^{-1/2}$ for $\delta>0$. We have
\begin{align}
\label{weighted_hardy_1}
C_0\int_{\R^N}\frac{|u |^2}{|x|^2(|x|^2+\delta)}d\mu 
\leq & \int_{\R^N}\left|\nabla\left(\frac{u }{(|x|^2+\delta)^{1/2}}\right)\right|^2d\mu +C_1\int_{\R^N}\frac{|u |^2}{|x|^2+\delta}d\mu.
\end{align}
Since
\begin{align*}
\left|\nabla\left(\frac{u }{(|x|^2+\delta)^{1/2}}\right)\right|^2
= \left|\frac{\nabla u }{(|x|^2+\delta)^{1/2}}-u \frac{x}{(|x|^2+\delta)^{3/2}}\right|^2 
=  \frac{|\nabla u |^2}{|x|^2+\delta}-2u \frac{  \nabla u\cdot x }{(|x|^2+\delta)^2}+\frac{|u |^2|x|^2}{(|x|^2+\delta)^3},
\end{align*}
by applying the integration by parts formula to the above first addend we infer that
\begin{align}\label{conto_grad_weighted_hardy}
\int_{\R^N}\left|\nabla\left(\frac{u }{(|x|^2+\delta)^{1/2}}\right)\right|^2d\mu 
= & \int_{\R^N}\left(\frac{  \nabla u\cdot \nabla u  }{|x|^2+\delta}-2u \frac{  \nabla u \cdot x }{(|x|^2+\delta)^2}+\frac{|u |^2|x|^2}{(|x|^2+\delta)^3}\right)d\mu  \nonumber \\
= & \int_{\R^N} \nabla u \cdot\nabla\left(\frac{u }{|x|^2+\delta}\right)  d\mu + \int_{\R^N}\frac{|u|^2|x|^2}{(|x|^2+\delta)^3}d\mu  \nonumber \\
= & -\int_{\R^N}Lu \frac{u }{|x|^2+\delta}d\mu +\int_{\R^N}\frac{|u |^2|x|^2}{(|x|^2+\delta)^3} d\mu .
\end{align}
By replacing \eqref{conto_grad_weighted_hardy} in \eqref{weighted_hardy_1} and using the fact that $|x|^2<|x|^2+\delta$, we get
\begin{align*}
(C_0-1)\int_{\R^N}\frac{|u |^2}{(|x|^2+\delta)^2}d\mu 
\leq -\int_{\R^N}Lu \frac{u}{|x|^2+\delta}d\mu 
+C_1\int_{\R^N}\frac{|u |^2}{|x|^2+\delta}d\mu.
\end{align*}
Let us apply the Young inequality $ab\leq \frac{\eta}{2} a^2+\frac{1}{2\eta}b^2$, for any $a,b,\eta>0$, to the first addend of the right-hand side above, with $a=|u(x) |(|x|^2+\delta)^{-1}$ and $b=|Lu(x) |$. It follows that
\begin{align*}
-\int_{\R^N}Lu \frac{u }{|x|^2+\delta}d\mu 
\leq \frac{1}{2\eta}\int_{\R^N}|Lu |^2d\mu +\frac{\eta}{2}\int_{\R^N}\frac{|u |^2}{(|x|^2+\delta)^2}d\mu ,
\end{align*}
which gives
\begin{align*}
2\eta\left(C_2-\frac{\eta}{2}\right)\int_{\R^N}\frac{|u |^2}{(|x|^2+\delta)^2}d\mu 
\leq \int_{\R^N}|Lu|^2d\mu +2C_1\eta\int_{\R^N}\frac{|u|^2}{|x|^2+\delta}d\mu ,
\end{align*}
where $C_2:=C_0-1$. The maximum of the function $(0,+\infty)\ni \eta\mapsto 2\eta\left(C_2-\frac{\eta}{2}\right)$ is achieved at $\eta=C_2$. Then,
\begin{align}
\label{weighted_Hardy_2}
(C_2)^2\int_{\R^N}\frac{|u |^2}{(|x|^2+\delta)^2}d\mu\leq \int_{\R^N}|Lu |^2d\mu +2C_1C_2\int_{\R^N}\frac{|u |^2}{|x|^2+\delta}d\mu .
\end{align}
To prove $(i)$, it is enough to apply again the Young inequality $2ab\leq \widetilde{\delta} a^2+\frac{1}{\widetilde{\delta}}b^2$ for any $a,b,\widetilde{\delta}>0$ to the last addend in the right-hand side of \eqref{weighted_Hardy_2} with $a=|u(x)|(|x|^2+\delta)^{-1}$ and $b=|u(x)|$. It follows that
\begin{align*}
(C_2)^2\int_{\R^N}\frac{|u |^2}{(|x|^2+\delta)^2}d\mu \leq \int_{\R^N}|Lu |^2d\mu +\widetilde{\delta}{C_1C_2}\int_{\R^N}\frac{|u |^2}{(|x|^2+\delta)^2}d\mu+\frac{C_1C_2}{\widetilde{\delta}}\int_{\R^N}{|u |^2}d\mu,
\end{align*}
for any $\widetilde{\delta}>0$. For $\varepsilon>0$ take $\widetilde{\delta}=\varepsilon(C_1 C_2)^{-1}$. Then we have
\begin{align*}
\left((C_2)^2-\varepsilon\right)\int_{\R^N}\frac{|u |^2}{(|x|^2+\delta)^2}d\mu \leq \int_{\R^N}|Lu |^2d\mu+\frac{(C_1C_2)^2}{\varepsilon}\int_{\R^N}{|u |^2}d\mu.
\end{align*}
Letting $\delta\to 0$, the thesis follows by applying Fatou's lemma.
\\
For assertion $(ii)$ we use \eqref{weighted_Hardy_2} to deduce that
\begin{equation}\label{eq:4-6}
(C_2)^2\int_{\R^N}\frac{|u |^2}{(|x|^2+\delta)^2}d\mu \le 
\int_{\R^N}|Lu |^2d\mu +2C_1C_2\int_{\R^N}\frac{|u|^2}{|x|^2}d\mu.
\end{equation}
Now, applying \eqref{weigthed_Hardy} to the last integral in the right-hand side of \eqref{eq:4-6} we obtain
\begin{align*}
(C_0-1)^2\int_{\R^N}\frac{|u |^2}{(|x|^2+\delta)^2}d\mu
\leq & \int_{\R^N}|Lu |^2d\mu +\frac{2(C_0-1)C_1}{C_0}\int_{\R^N}|\nabla u |^2d\mu  \\& 
+\frac{2(C_0-1)(C_1)^2}{C_0}\int_{\R^N}|u |^2d\mu.
\end{align*}
So, Fatou's lemma gives the assertion $(ii)$.
\end{proof}

\begin{remark}\label{rem:ul-mom}
If one assumes $(H1)$ and $(H2)$, then, since $C_c^\infty(\R^N)$ is a core for $L$, it follows that \eqref{weighted-Rellich0} and \eqref{weighted_Rellich} holds true for all $u\in D(L)$.
\end{remark}

\subsection{Optimality of the constant}
In this subsection we prove that the best constant in the weighted Rellich inequality is $(C_0-1)^2=\left( \frac{N(N-4)}{4} \right)^2$, which is known as the best constant in the classical Rellich inequality.

Using Remark \ref{rem:2-1}, we first observe the following properties of the measure $\mu$.
\begin{remark}\label{regmu}
Assume that $\mu$ satisfies $(H1)$. Then
\begin{itemize}
\item[(i)]  $\sup_{\delta\in \R}\left\{ \frac{1}{|x|^{\delta}}\in L^{1}_{\mu,\rm loc}(\R^{N}) \right\}=N$. Indeed, from Remark \ref{rem:2-1}, for any $\varepsilon>0$ we have
\begin{align*}
\int_{B_1\setminus B_\varepsilon}|x|^{-\delta}d\mu
\sim \int_{B_1\setminus B_\varepsilon}|x|^{-\delta}dx
= \omega_N\int_{\varepsilon}^1t^{-\delta+N-1}dt
=\begin{cases}
\displaystyle \omega_N\frac{(1-\varepsilon^{N-\delta})}{N-\delta}, & \delta<N, \\
-\omega_N\ln(\varepsilon) & \delta=N;
\end{cases}
\end{align*}
\item[(ii)] for any $\delta<N$ and any $n\in\N$ we have
\begin{align*}
\int_{B_{1/n}}|x|^{-\delta} d\mu =\omega_N\left(\frac{1}{n}\right)^{-\delta+N}.
\end{align*}
This simply follows arguing as in $(i)$.
\end{itemize}
\end{remark}

 Taking into consideration Remark \ref{rem:ul-mom} and  Remark \ref{regmu} we prove now the optimality of the constant $(C_0-1)^2$ in the weighted Rellich  inequality \eqref{weighted_Rellich}.

\begin{theorem}\label{Th: Optimal}
Assume that $N\geq5$, Hypothesis $(H1)$ holds and for any $\varepsilon >0$ there is $C_\varepsilon >0$ such that 
$$\left|\frac{\nabla \mu}{\mu}\right|^2\le \frac{\varepsilon}{|x|^2}+C_\varepsilon \quad \hbox{\ in } B_{R_1}$$
for some $R_1>0$.
Then, the weighted Rellich inequality 
\begin{align*}
c\int_{\R^N}\frac{|u(x)|^2}{|x|^4}d\mu
\leq \int_{\R^N}|L u(x)|^2d\mu+C\|u\|^{2}_{H^{1}_{\mu}}\,\text{ for some }C>0\text{ and all }u\in D(L)
\end{align*}
does not hold if $c>(C_0-1)^2=\left( \frac{N(N-4)}{4} \right)^2$.
\end{theorem}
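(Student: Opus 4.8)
The plan is to show optimality by exhibiting, for any $c > (C_0-1)^2$, a sequence of test functions $u_n \in D(L)$ along which the ratio
\[
\frac{\int_{\R^N}|Lu_n|^2\,d\mu + C\|u_n\|^2_{H^1_\mu}}{\int_{\R^N}\frac{|u_n|^2}{|x|^4}\,d\mu}
\]
tends to $(C_0-1)^2$, contradicting the claimed inequality. The natural candidates are the (truncated and regularized) extremals of the \emph{classical} Rellich inequality on $\R^N$, namely functions behaving like $|x|^{-\frac{N-4}{2}}$ near the origin. Concretely I would take $u_n(x) = |x|^{-\frac{N-4}{2}+\frac1n}\,\eta(x)$ for a fixed cutoff $\eta \in C_c^\infty(\R^N)$ equal to $1$ near $0$, possibly with an additional inner truncation to keep $u_n$ smooth, and compute the three integrals as $n \to \infty$. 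Since $N \geq 5$, the exponent $-\frac{N-4}{2}+\frac1n$ lies strictly above $-\frac{N}{2}$, so by Remark \ref{regmu}(i) all the relevant powers of $|x|$ are locally $\mu$-integrable; this is exactly what makes the construction legitimate in the weighted setting.

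The key computational step is to expand $Lu_n = \Delta u_n + \frac{\nabla\mu}{\mu}\cdot\nabla u_n$ and show that near the origin the leading singular behaviour is governed entirely by $\Delta u_n$, the drift term being of lower order. For a radial power $|x|^{-\beta}$ one has $\Delta |x|^{-\beta} = \beta(\beta+2-N)|x|^{-\beta-2}$, and with $\beta = \frac{N-4}{2}-\frac1n$ the constant $\beta(\beta+2-N)$ converges to $\frac{N-4}{2}\cdot\big(\!-\frac{N}{2}\big) = -\frac{N(N-4)}{4} = -(C_0-1)$. Thus $|\Delta u_n|^2 \sim (C_0-1)^2 |x|^{-N+\frac{2}{n}}\,|\eta|^2$ near $0$, while $\frac{|u_n|^2}{|x|^4} \sim |x|^{-N+\frac2n}|\eta|^2$, so both $\int|Lu_n|^2\,d\mu$ and $\int \frac{|u_n|^2}{|x|^4}\,d\mu$ diverge like $\int_{B_r} |x|^{-N+\frac2n}\,d\mu \sim \frac{\omega_N}{2/n} = \frac{n\omega_N}{2}$ as given by Remark \ref{regmu}(ii), and their ratio tends to $(C_0-1)^2$. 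The drift contribution $\frac{\nabla\mu}{\mu}\cdot\nabla u_n$ is controlled using the hypothesis $\left|\frac{\nabla\mu}{\mu}\right|^2 \le \frac{\varepsilon}{|x|^2}+C_\varepsilon$ in $B_{R_1}$: since $|\nabla u_n| \lesssim |x|^{-\frac{N-4}{2}-1+\frac1n}$ near $0$, the cross term $\left|\frac{\nabla\mu}{\mu}\cdot\nabla u_n\right|$ is bounded by $\big(\sqrt\varepsilon\,|x|^{-1}+\sqrt{C_\varepsilon}\big)\cdot C|x|^{-\frac{N-2}{2}+\frac1n}$, which contributes to $\int|Lu_n|^2d\mu$ a term of order $\varepsilon \cdot \frac{n\omega_N}{2}$ plus lower order; dividing by the denominator $\sim \frac{n\omega_N}{2}$ and then sending $\varepsilon \to 0$ after $n\to\infty$ kills it. Similarly $\|u_n\|^2_{H^1_\mu}$ diverges only like $\int_{B_r}|x|^{-N+2+\frac2n}\,d\mu$, which by Remark \ref{regmu} is $O(1)$ in $n$ (the exponent $-N+2+\frac2n$ exceeds $-N$ with room to spare), so after division by $\frac{n\omega_N}{2}$ the $C\|u_n\|^2_{H^1_\mu}$ term vanishes in the limit as well.

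Assembling these estimates: if the inequality held with constant $c$, dividing through by $\int\frac{|u_n|^2}{|x|^4}\,d\mu$ and passing to the limit $n\to\infty$ (and then $\varepsilon\to0$) would force $c \le (C_0-1)^2$, a contradiction. The main obstacle I anticipate is bookkeeping the error terms: one must verify carefully that all contributions away from the origin (where $\eta$ is cut off) stay bounded as $n\to\infty$ and hence are negligible after normalization, and that the drift term genuinely is subleading uniformly — this is where the precise hypothesis on $\left|\frac{\nabla\mu}{\mu}\right|^2$ near $0$ is essential, since without it the drift could contribute at the same order $|x|^{-N}$ as the principal part and spoil the limit. A secondary technical point is ensuring $u_n \in D(L)$; since $D(L) \supset H^1_\mu(\R^N) \cap \{Lu \in L^2_\mu\}$ is not quite the statement, I would instead regularize $u_n$ near the origin (replacing $|x|^{-\beta}$ by a bounded smooth function on $B_{1/n}$) so that $u_n \in C_c^\infty(\R^N) \subset D(L)$, and check that this inner modification changes each integral by a bounded amount, again negligible after dividing by the divergent normalization.
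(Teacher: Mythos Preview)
Your proposal is correct and shares the paper's core strategy: test the inequality against cutoffs of radial powers $|x|^\gamma$, then use the hypothesis $\left|\frac{\nabla\mu}{\mu}\right|^2\le \frac{\varepsilon}{|x|^2}+C_\varepsilon$ near the origin to show that the drift contribution to $|Lu_n|^2$ is of order $\varepsilon$ relative to the principal part, hence negligible after normalizing and sending $\varepsilon\to0$. The difference is only in how the family is parametrized. You let the exponent $\gamma_n=2-\tfrac{N}{2}+\tfrac1n$ approach the critical value from \emph{above}, so that $|x|^{2\gamma_n-4}\in L^1_{\mu,{\rm loc}}$ and each $u_n$ already lies in $D(L)$ via the form characterization; the blow-up then comes from $\int_{B_r}|x|^{2\gamma_n-4}\,d\mu\sim n/2$ as $\gamma_n\downarrow 2-\tfrac N2$. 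The paper instead \emph{fixes} an exponent $\gamma\in\big(1-\tfrac N2,\,2-\tfrac N2\big]$ chosen (by continuity of $f(\gamma)=\gamma^2(\gamma+N-2)^2$, which satisfies $f(2-\tfrac N2)=(C_0-1)^2$) so that $(1+\varepsilon)f(\gamma)+O(\varepsilon)<c$, and patches $|x|^\gamma$ on a shrinking inner ball $B_{1/n}$ by $\alpha_n+\beta_n|x|^{\gamma_1}$ to keep $\varphi_n\in D(L)$; here $2\gamma-4\le -N$, so $|x|^{2\gamma-4}\notin L^1_{\mu,{\rm loc}}$ and the divergence comes from $\int_{B_1\setminus B_{1/n}}|x|^{2\gamma-4}\,d\mu\to\infty$. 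Your route is slightly cleaner in that the inner patching is optional (your $u_n$ is already in $D(L)$), whereas the paper's version makes the role of the target constant $c$ more transparent by selecting $\gamma$ upfront so that the coefficient of the divergent integral is manifestly negative.
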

{\sc Proof.}
Take $\gamma<0$ and consider the function $\varphi(x)=|x|^\gamma$. A  simple computation shows that
\[
L\varphi=\Delta |x|^{\gamma}+\frac{\nabla \mu}{\mu}\cdot \nabla |x|^{\gamma}=
\gamma \left( \gamma-2+N+ \frac{\nabla \mu}{\mu} \cdot x\right)|x|^{\gamma-2}.
\]
Using Young's inequality and the hypotheses on $\mu$, for $x\in B_{R_1}$, we have
\begin{align}
&|L\varphi |^{2}\leq
	(1+\varepsilon)\gamma^2(\gamma-2+N)^2|x|^{2\gamma-4}
	+\left( 1+\frac{1}{\varepsilon} \right)\gamma^{2}\left |\frac{\nabla \mu}{\mu} \right |^{2}|x|^{2\gamma-2}\nonumber \\
&\quad \leq \left[
		(1+\varepsilon)\gamma^2(\gamma-2+N)^2
			+\varepsilon^{2}\left( 1+\frac{1}{\varepsilon} \right)\gamma^{2}
		 \right]|x|^{2\gamma-4}
		 +C_{\varepsilon^{2}}\left( 1+\frac{1}{\varepsilon} \right)\gamma^{2}|x|^{2\gamma-2}\nonumber \\
&\quad \leq \left[
		(1+\varepsilon)\gamma^2(\gamma-2+N)^2
			+(\varepsilon^{2}+\varepsilon)\gamma^{2}
		 \right]|x|^{2\gamma-4}
		 +C|x|^{2\gamma-2}.\label{eq:modifie}	 
\end{align}
%
%

Let $c>(C_0-1)^2$ and $\gamma$ be such that 
\begin{equation*} 
\begin{cases}
(1+\varepsilon)\gamma^2(\gamma-2+N)^2+(\varepsilon^{2}+\varepsilon)\gamma^{2} < c\\
2\gamma-4\leq -N\\
2\gamma-2>-N
\end{cases}
\end{equation*}
so that $\varphi\in H^1_{\rm loc}(\R^N)$ but $\varphi\notin H^2_{\rm loc}(\R^N)$. Observe that such choice of $\gamma $ is possible for $\varepsilon$ small enough since the function $f(\gamma)=\gamma^2\left( \gamma-2+N \right)^2$ is  continuous  and decreasing in
$\left(1-\frac{N}{2},2-\frac{N}{2}\right]$   and $f(\gamma)\geq f(2-\frac{N}{2})=(C_0-1)^2$.

For $n\in\N$ and $\vartheta\in C_c^\infty(\R^N)$ such that $0\leq \vartheta\leq 1$,
$\vartheta=1$ in $B_1$ and $\vartheta=0$ in $B_2^c$ we
set 
\[
\varphi_n(x)=\left\{
\begin{array}{cl}
\alpha_n+\beta_n|x|^{\gamma_1} &\text{ if }|x|<\frac{1}{n},\\
|x|^\gamma&\text{ if }\frac{1}{n}\leq |x|< 1,\\
|x|^\gamma\vartheta(x) &\text{ if }1\leq |x|<2,\\
0&\text{ if }|x|\geq 2,\\
\end{array}
\right.
\]
where $2-\frac{N}{2}<\gamma_1<0$, $\beta_n=\frac{\gamma_1}{\gamma}\frac{1}{n^{\gamma-\gamma_1}}$ and 
$\alpha_n=\frac{1}{n^\gamma}-\frac{\beta_n}{n^{\gamma_1}}$. 
Since $L$ is the operator associated to the form $a$ given by \eqref{form-L}, on can see that $\varphi_n\in D(L)$. Modifying the support of $\varphi_n$ we can assume without loss of generality that $R_1=1$.

We propose now to prove that 
\[
\lambda_1:=\inf_{\varphi \in H^2_\mu\setminus \{0\} }
\left(
  \frac{\int_{\R^N} \left( |L \varphi|^2-\frac{c}{|x|^4}
\varphi^2 \right)\, d\mu}{\int_{\R^N} \varphi ^{2}\,d\mu+\int_{\R^N} |\nabla \varphi |^{2}\,d\mu}
\right)
\]
is equal to $-\infty$.
To this purpose we observe that  
\begin{align*}
&\int_{\R^N} \left( |L \varphi_n|^2-\frac{c}{|x|^4}\varphi_n^2 \right)\, d\mu
	 =\int_{B_1\setminus B_{\frac{1}{n}}} \left( \left| L |x|^\gamma \right|^2-
	\frac{c}{|x|^4}|x|^{2\gamma} \right)\, d\mu\nonumber\\
&\qquad
	+\int_{B_2\setminus B_1} \left( |L\left( |x|^\gamma \vartheta(x) \right)|^2-\frac{c}{|x|^4}
	\left( |x|^\gamma \vartheta(x) \right)^2 \right)\, d\mu\nonumber\\
&\qquad  
	+\int_{B_{\frac{1}{n}}}   |L \left(\alpha_n+\beta_n|x|^{\gamma_1}  \right)|^{2}
	-\frac{c}{|x|^{4}} \left(\alpha_n+\beta_n|x|^{\gamma_1}  \right)^2   d\mu.
\end{align*}
So, by \eqref{eq:modifie}, we have
\begin{align*}
&\int_{\R^N} \left( |L \varphi_n|^2-\frac{c}{|x|^4}\varphi_n^2 \right)\, d\mu
 	\leq \left[(1+\varepsilon)\gamma^2(\gamma-2+N)^2
	+(\varepsilon^{2}+\varepsilon)\gamma^{2}
	-c\right]\int_{B_1\setminus B_{\frac{1}{n}}} |x|^{2\gamma-4}\, d \mu\\
&\quad+ C\int_{B_1\setminus B_{\frac{1}{n}}}|x|^{2\gamma-2}
	+\int_{B_2\setminus B_1} \left( |L\left( |x|^\gamma \vartheta(x) \right)|^2-\frac{c}{|x|^4}
	\left( |x|^\gamma \vartheta(x) \right)^2 \right)\, d\mu \\
&\quad + \int_{B_{\frac{1}{n}}}   \beta_n^2
	\left[
		(1+\varepsilon)\gamma_{1}^2(\gamma_{1}-2+N)^2
			+(\varepsilon^{2}+\varepsilon)\gamma_1^{2}
		 \right]|x|^{2\gamma_{1}-4}
        -c\left(\alpha_n+\beta_n|x|^{\gamma_1}  \right)^2\frac{1}{|x|^4}\,d\mu \\
&\quad 	+ C\int_{B_{\frac{1}{n}}}  |x|^{2\gamma_{1}-2}d\mu.
\end{align*}
The term $\int_{B_{\frac{1}{n}}}   \beta_n^2
	\left[
		(1+\varepsilon)\gamma_{1}^2(\gamma_{1}-2+N)^2
			+(\varepsilon^{2}+\varepsilon)\gamma_1^{2}
		 \right]|x|^{2\gamma_{1}-4}
        -c\left(\alpha_n+\beta_n|x|^{\gamma_1}  \right)^2\frac{1}{|x|^4}\,d\mu $ is negative, for $\varepsilon$ small enough, since $f(\gamma_1)\leq c$, $\alpha_n>0$ and $\beta_n>0$. Moreover, using $(H1)$ we deduce that 
 $$\int_{B_2\setminus B_1} \left( |L\left( |x|^\gamma \vartheta(x) \right)|^2-\frac{c}{|x|^4}
	\left( |x|^\gamma \vartheta(x) \right)^2 \right)\, d\mu \le C_\vartheta$$
	for some constant $C_\vartheta >0$.
Furthermore, since $2\gamma_{1}-2>2\gamma-2>-N$, we obtain
$$\int_{B_{\frac{1}{n}}}|x|^{2\gamma_1-2}+\int_{B_1\setminus B_{\frac{1}{n}}}|x|^{2\gamma-2}d\mu \le C$$
for some positive constant $C$.
Then we have
\begin{equation}\label{eq:stima0}
\int_{\R^N} \left( |L \varphi_n|^2-\frac{c}{|x|^4}\varphi_n^2 \right)\, d\mu
 	\leq \left[(1+\varepsilon)\gamma^2(\gamma-2+N)^2
	+(\varepsilon^{2}+\varepsilon)\gamma^{2}
	-c\right]\int_{B_1\setminus B_{\frac{1}{n}}} |x|^{2\gamma-4}\, d \mu+C_{\vartheta}+C.
\end{equation}
%
%
On the other hand we know that $\|\varphi_n\|_{H^1_{\mu}}\geq C'_\vartheta$ for some $C'_\vartheta >0$.
Putting together this and \eqref{eq:stima0} one obtains
\[
\lambda_1
\leq \frac{ \left[(1+\varepsilon)\gamma^2(\gamma-2+N)^2+(\varepsilon^{2}+\varepsilon)\gamma^{2}-c\right]\int_{B_1\setminus B_{\frac{1}{n}}} |x|^{2\gamma-4}\, d\mu+C_\vartheta +C}{C'_\vartheta }.
\]
%
Now, since $2\gamma-4\leq-N$ and
taking into account Remark \ref{regmu} (i) we have
\[
 \lim_{n\to \infty }\int_{B_1\setminus B_{\frac{1}{n}}}|x|^{2\gamma-4}\, d\mu=+\infty 
\]
Therefore, since $(1+\varepsilon)\gamma^2(\gamma-2+N)^2+(\varepsilon^{2}+\varepsilon)\gamma^{2}-c<0$ it follows that
$\lambda_1=-\infty$.
\qed

\section{Domain characterization}\label{domain-bi-kolm}

In this section we want to characterise the domain of $A$. To this purpose we first have to characterise the domain of $L$.
Then, recall that  $U=-\frac{ \Delta \sqrt{\mu}}{\sqrt 	\mu}=-\frac{1}{2}\frac{\Delta \mu}{\mu}+\frac{1}{4}\frac{|\nabla \mu|^{2}}{\mu^{2}}$, we first prove the following fundamental estimate.
\begin{proposition}\label{pr:prop-stima-drift}
Let $N\geq3$. Assume that $\mu$ satisfies Hypothesis $(H2)$.
If $\varphi\in H^{1}_{\mu}(\R^{N})$
then $\frac{\nabla \mu}{\mu}\varphi\in L^{2}_{\mu}(\R^N,\R^N)$ and there exists $C>0$ such that
$$
\left \|\frac{\nabla \mu}{\mu}\varphi\right \|_{L^{2}_\mu} \leq C\left (\|\nabla \varphi\|_{L^{2}_\mu}+\|\varphi\|_{L^2_\mu}\right).
$$
\end{proposition}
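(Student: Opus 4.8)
The plan is to reduce the estimate to the weighted Hardy inequality \eqref{weigthed_Hardy} by exploiting the structure of the potential $U$. Recall that by Hypothesis $(H2)$(ii) and (iii), the quantity $U = \frac14\left|\frac{\nabla\mu}{\mu}\right|^2 - \frac12\frac{\Delta\mu}{\mu}$ satisfies $|x|^2 U(x) \le \frac14\frac{1}{|\log|x||^2}$ near the origin and $U$ is bounded above away from the origin. The crucial algebraic identity is that $\frac14\left|\frac{\nabla\mu}{\mu}\right|^2 = U + \frac12\frac{\Delta\mu}{\mu}$, so controlling $\left|\frac{\nabla\mu}{\mu}\varphi\right|^2$ amounts to controlling both $\int_{\R^N} U|\varphi|^2\,d\mu$ and $\int_{\R^N}\frac{\Delta\mu}{\mu}|\varphi|^2\,d\mu$ in terms of $\|\nabla\varphi\|_{L^2_\mu}^2 + \|\varphi\|_{L^2_\mu}^2$.

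The first term is the easy one: since $U$ is bounded above outside any ball $B_R$ and satisfies $U(x) \le \frac{1}{4|x|^2|\log|x||^2}$ on $B_{R_0}$, and since $|x|^{-2}|\log|x||^{-2} \le C|x|^{-2}$ on a small ball (shrinking $R_0$ if necessary so that $|\log|x||\ge 1$), we get $U \le \frac{C}{|x|^2} + C'$ on all of $\R^N$. Then the weighted Hardy inequality \eqref{weigthed_Hardy} (valid for $\varphi \in H^1_\mu(\R^N)$ by the cited \cite[Theorem 3.3]{can-gre-rha-tac}) immediately gives $\int_{\R^N} U|\varphi|^2\,d\mu \le C\left(\|\nabla\varphi\|_{L^2_\mu}^2 + \|\varphi\|_{L^2_\mu}^2\right)$. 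The second term, $\int_{\R^N}\frac{\Delta\mu}{\mu}|\varphi|^2\,d\mu$, is handled by integration by parts: formally $\int_{\R^N}\frac{\Delta\mu}{\mu}|\varphi|^2\,d\mu = \int_{\R^N}\Delta\mu\,\frac{|\varphi|^2}{\mu}\,dx = -\int_{\R^N}\nabla\mu\cdot\nabla\!\left(\frac{|\varphi|^2}{\mu}\right)dx$, and expanding $\nabla\!\left(\frac{|\varphi|^2}{\mu}\right) = \frac{2\varphi\nabla\varphi}{\mu} - \frac{|\varphi|^2\nabla\mu}{\mu^2}$ produces $-2\int \frac{\nabla\mu}{\mu}\cdot\nabla\varphi\,\varphi\,d\mu + \int\left|\frac{\nabla\mu}{\mu}\right|^2|\varphi|^2\,d\mu$. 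Recombining this with the $U$-identity and using Young's inequality $2\left|\frac{\nabla\mu}{\mu}\right||\nabla\varphi||\varphi| \le \frac12\left|\frac{\nabla\mu}{\mu}\right|^2|\varphi|^2 + 2|\nabla\varphi|^2$ lets one absorb half of the bad term $\int\left|\frac{\nabla\mu}{\mu}\right|^2|\varphi|^2\,d\mu$ into the left-hand side, leaving $\int\left|\frac{\nabla\mu}{\mu}\right|^2|\varphi|^2\,d\mu \le C\int U|\varphi|^2\,d\mu + C\|\nabla\varphi\|_{L^2_\mu}^2$, which is finite by the previous step.

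The main obstacle is the rigorous justification of these integrations by parts, since a priori we only know $\sqrt\mu \in H^1_{\rm loc}$ and $\Delta\mu \in L^1_{\rm loc}$, and $\varphi$ is a general $H^1_\mu$ function for which neither the finiteness of the integrals nor the decay at infinity needed to drop boundary terms is evident. I would address this by first proving the estimate for $\varphi \in C_c^\infty(\R^N)$ — where all integrals are manifestly finite and a cutoff argument near the (possible) singularities of $\mu$ combined with the local integrability hypotheses in $(H2)$(i) legitimizes the integration by parts — and then passing to general $\varphi \in H^1_\mu(\R^N)$ by density, using that $C_c^\infty(\R^N)$ is dense in $H^1_\mu(\R^N)$ and that the right-hand side of the desired inequality is continuous in the $H^1_\mu$ norm; Fatou's lemma on the left-hand side then yields both that $\frac{\nabla\mu}{\mu}\varphi \in L^2_\mu(\R^N,\R^N)$ and the stated bound. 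A secondary technical point is the localization near $|x|=0$: one introduces a radial cutoff vanishing on $B_\epsilon$, performs the integration by parts on the region where $\mu$ is smooth enough, and checks that the error terms vanish as $\epsilon\to 0$ using $(H2)$(i) and Remark \ref{rem:2-1}.
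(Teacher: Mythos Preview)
Your overall strategy is exactly the paper's: integrate by parts, use the definition of $U$ to reorganize, apply Young's inequality to absorb a fraction of $\int\left|\frac{\nabla\mu}{\mu}\right|^2\varphi^2\,d\mu$ into the left-hand side, and then bound the remaining $\int U\varphi^2\,d\mu$ by the weighted Hardy inequality via the pointwise control $2U\le c_1|x|^{-2}+c_2$ coming from $(H2)$(ii)--(iii). Your attention to the density/Fatou justification is actually more explicit than the paper's, which simply writes the computation for $\varphi\in H^1_\mu$.

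There is, however, a genuine computational slip in your integration-by-parts step. Since $d\mu=\mu\,dx$, one has
\[
\int_{\R^N}\frac{\Delta\mu}{\mu}|\varphi|^2\,d\mu=\int_{\R^N}\Delta\mu\,|\varphi|^2\,dx,
\]
\emph{not} $\int\Delta\mu\,\frac{|\varphi|^2}{\mu}\,dx$ as you wrote. Consequently the correct identity after integrating by parts is
\[
\int_{\R^N}\frac{\Delta\mu}{\mu}|\varphi|^2\,d\mu=-2\int_{\R^N}\frac{\nabla\mu}{\mu}\cdot\varphi\nabla\varphi\,d\mu,
\]
with no extra $\int\left|\frac{\nabla\mu}{\mu}\right|^2|\varphi|^2\,d\mu$ term. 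If one follows your version literally and substitutes into $\left|\frac{\nabla\mu}{\mu}\right|^2=4U+2\frac{\Delta\mu}{\mu}$, the $\left|\frac{\nabla\mu}{\mu}\right|^2$ terms combine with the wrong sign and the $U$ term ends up on the wrong side. With the corrected identity you get directly
\[
\int\left|\frac{\nabla\mu}{\mu}\right|^2\varphi^2\,d\mu=4\int U\varphi^2\,d\mu-4\int\frac{\nabla\mu}{\mu}\cdot\varphi\nabla\varphi\,d\mu,
\]
and Young's inequality yields $\frac12\int\left|\frac{\nabla\mu}{\mu}\right|^2\varphi^2\,d\mu\le 4\int U\varphi^2\,d\mu+8\|\nabla\varphi\|_{L^2_\mu}^2$, which is precisely the paper's inequality (5.1) with a specific $\delta$. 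So the slip is localized and easily fixed; once corrected, your argument and the paper's are the same.
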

{\sc Proof.}
Integrating by parts, taking into account the definition of $U$ and using H\"older's and Young's inequality we have
\begin{align*}
&\left \|\frac{\nabla \mu}{\mu}\varphi\right \|^{2}_{L^{2}_\mu}
	=\int_{\R^N} \nabla \mu \frac{\nabla \mu}{\mu}\varphi^{2}dx
	=-\int_{\R^N} \left (\frac{\Delta \mu}{\mu}\varphi^{2}-\frac{|\nabla \mu|^{2}}{\mu^{2}}\varphi^{2}
		+2\varphi \frac{\nabla \mu}{\mu}\cdot \nabla\varphi \right )d\mu\\
&\quad \leq	\int_{\R^N} 2U\varphi^{2}d\mu+\frac{1}{2}\int_{\R^N} \frac{|\nabla \mu|^{2}}{\mu^{2}}\varphi^{2}d\mu+
		\delta \int_{\R^N} \frac{|\nabla \mu|^{2}}{\mu^{2}}\varphi^{2}d\mu
			+\frac{C}{\delta }\int_{\R^N} |\nabla \varphi |^{2}d\mu
\end{align*}
for every $\delta>0$ and some $C>0$. Then 
\begin{equation}\label{eq:pr-stima-drift1}
\left(\frac{1}{2}-\delta\right)\left \|\frac{\nabla \mu}{\mu}\varphi\right \|^{2}_{L^{2}_\mu}\leq 
	\int_{\R^N} 2U\varphi^{2}d\mu+\frac{C}{\delta}\|\nabla \varphi\|^{2}_{L^{2}_\mu}.
\end{equation}
We observe now that the hypotheses of \cite[Theorem 3.3]{can-gre-rha-tac} are fulfilled
and then the weighted Hardy inequality 
\[
c\int_{\R^N} \frac{\varphi^{2}}{|x|^{2}} d\mu
\leq \int_{\R^N} |\nabla \varphi|^{2}d\mu+c_{\mu}\|\varphi\|_{L^2_\mu}^{2}
\]
holds for $c<C_{0} =\left (\frac{N-2}{2}\right)^{2}$, where $c_{\mu}$ is a positive constant. 
Moreover, by $(ii)$ and $(iii)$ of $(H2)$,   there exist  
$c_{1}< C_{0}$ and $c_{2} \geq 0$ such that  
$2U\leq \frac{c_{1}}{|x|^{2}}+c_{2}$. Then we have
\begin{equation}\label{eq:pr-stima-drift2}
\int_{\R^N} 2U\varphi^{2}d\mu\leq c_{1}\int_{\R^N} \frac{\varphi^{2}}{|x|^{2}} d\mu+c_{2}\int_{\R^N} \varphi^{2}d\mu
\leq \|\nabla \varphi\|_{L^{2}_\mu}^{2}+(c_{2}+c_{\mu})\|\varphi\|_{L^{2}_\mu}^{2}.
\end{equation}
Putting together \eqref{eq:pr-stima-drift1}, \eqref{eq:pr-stima-drift2} and taking $\delta\in (0,1/2)$ we obtain the thesis.
\qed

 Throughout the rest of this section we will assume Hypothesis $(H2)$ and further\\
\textbf{Hypothesis $(H3)$} For any $\varepsilon >0$ there is $C_\varepsilon >0$ such that 
\begin{itemize}
\item [(i)] $\mu \in W^{2,1}_{\rm loc}(\R^{N})$  and $\left |D_{i}\left (\frac{D_{j}\mu}{\mu}\right)\right |\leq \frac{\varepsilon}{|x|^{2}}+C_\varepsilon \left |\frac{\nabla \mu}{\mu}\right|$ for every $i,j=1,\dots,N$;
\item [(ii)] $\mu \in W^{3,1}_{\rm loc}(\R^{N})$ and $\left |D_{ij}\left (\frac{D_{k}\mu}{\mu}\right)\right |\leq \frac{\varepsilon}{|x|^{3}}+C_\varepsilon\left |\frac{\nabla \mu}{\mu}\right|$ for every $i,j,k=1,\dots,N$.
\end{itemize}
 
 In the sequel we need the following interpolation inequality.
\begin{proposition}\label{interp-ineq}
Assume that $N\ge 5$ and $\mu$ satisfies Hypotheses $(H2)$
and $(H3)(i)$.
Then for any $\varepsilon>0$ there exists $C_\varepsilon >0$ such that
\begin{equation}\label{eq:5-9}
\|\nabla u \|^2_{L^{2}_\mu}\leq \varepsilon \|D^2u\|^2_{L^{2}_\mu}+C_\varepsilon \|u\|^2_{L^{2}_\mu}
\end{equation}
for every $u\in C_c^\infty(\R^N)$.
\end{proposition}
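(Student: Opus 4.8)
The plan is to prove the weighted interpolation inequality \eqref{eq:5-9} by a standard integration-by-parts trick, but adapted to the weighted setting: namely, to write $\int |\nabla u|^2\, d\mu$ as $-\int u\, Lu\, d\mu$ and then estimate $\|Lu\|_{L^2_\mu}$ against $\|D^2 u\|_{L^2_\mu}$ plus lower-order terms, using Proposition \ref{pr:prop-stima-drift} to control the drift term $\frac{\nabla\mu}{\mu}\cdot\nabla u$. Concretely, for $u\in C_c^\infty(\R^N)$ the form identity \eqref{form-L} gives
\[
\|\nabla u\|_{L^2_\mu}^2=\int_{\R^N}|\nabla u|^2\,d\mu=-\int_{\R^N}u\,Lu\,d\mu
\le \|u\|_{L^2_\mu}\,\|Lu\|_{L^2_\mu}.
\]
Since $Lu=\Delta u+\frac{\nabla\mu}{\mu}\cdot\nabla u$, we bound
\[
\|Lu\|_{L^2_\mu}\le \|\Delta u\|_{L^2_\mu}+\Big\|\tfrac{\nabla\mu}{\mu}\cdot\nabla u\Big\|_{L^2_\mu}
\le \sqrt{N}\,\|D^2u\|_{L^2_\mu}+C\big(\|\nabla u\|_{L^2_\mu}+\|u\|_{L^2_\mu}\big),
\]
where the last step uses Proposition \ref{pr:prop-stima-drift} with $\varphi=\partial_j u$ (each component of $\nabla u$ lies in $H^1_\mu$), noting that $\|\nabla(\nabla u)\|_{L^2_\mu}=\|D^2u\|_{L^2_\mu}$.

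Combining the two displays yields, for every $\eta>0$,
\[
\|\nabla u\|_{L^2_\mu}^2\le \|u\|_{L^2_\mu}\Big(\sqrt N\,\|D^2u\|_{L^2_\mu}+C\|\nabla u\|_{L^2_\mu}+C\|u\|_{L^2_\mu}\Big)
\le \tfrac12\|\nabla u\|_{L^2_\mu}^2+C_\eta\|u\|_{L^2_\mu}^2+\eta\|D^2u\|_{L^2_\mu}^2+\tfrac{\sqrt N}{2\eta}\|u\|_{L^2_\mu}^2,
\]
after two applications of Young's inequality ($\|u\|\|\nabla u\|\le \tfrac14\|\nabla u\|^2+\|u\|^2$ up to constants, absorbed on the left, and $\sqrt N\|u\|\|D^2u\|\le \eta\|D^2u\|^2+\tfrac{N}{4\eta}\|u\|^2$). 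Absorbing the $\tfrac12\|\nabla u\|_{L^2_\mu}^2$ term on the left and renaming $\eta$ and the constant gives \eqref{eq:5-9}.

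The main point requiring care is the legitimacy of applying Proposition \ref{pr:prop-stima-drift} to $\varphi=\partial_j u$: this is fine because $u\in C_c^\infty(\R^N)$ forces $\partial_j u\in C_c^\infty(\R^N)\subset H^1_\mu(\R^N)$, so the hypothesis of that proposition is met and the estimate $\|\frac{\nabla\mu}{\mu}\partial_j u\|_{L^2_\mu}\le C(\|\nabla\partial_j u\|_{L^2_\mu}+\|\partial_j u\|_{L^2_\mu})$ holds, with $\sum_j\|\nabla\partial_j u\|_{L^2_\mu}^2=\|D^2u\|_{L^2_\mu}^2$ and $\sum_j\|\partial_j u\|_{L^2_\mu}^2=\|\nabla u\|_{L^2_\mu}^2$. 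One should also note that $(H3)(i)$ is not actually needed for this particular inequality — only $(H2)$ via Proposition \ref{pr:prop-stima-drift} — but it is stated as a blanket hypothesis for the section, so no harm; alternatively one phrases the whole argument using only $(H2)$. The only genuine inequality juggling is the double Young's-inequality bookkeeping, which is routine.
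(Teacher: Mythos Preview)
Your argument is correct and takes a genuinely different route from the paper's. The paper reduces to the \emph{unweighted} interpolation inequality $\|\nabla\varphi\|_{L^2}^2\le\varepsilon\|\Delta\varphi\|_{L^2}^2+C_\varepsilon\|\varphi\|_{L^2}^2$ via the substitution $\varphi=u\sqrt{\mu}$; this brings in the potential $U=-\Delta\sqrt{\mu}/\sqrt{\mu}$ and in particular a term $\int U^2u^2\,d\mu$, which the authors control using the pointwise bound from $(H3)(i)$ together with the weighted Rellich inequality \eqref{weighted_Rellich} (hence the need for $N\ge5$). Your approach bypasses all of this by using the form identity $\|\nabla u\|^2_{L^2_\mu}=-\int u\,Lu\,d\mu$ directly and then bounding $\|Lu\|_{L^2_\mu}$ through Proposition~\ref{pr:prop-stima-drift} applied to each $\partial_j u$. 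This is both shorter and, as you note, stronger: it uses only $(H2)$ and only needs $N\ge3$, which is all Proposition~\ref{pr:prop-stima-drift} requires. One small bookkeeping slip: Proposition~\ref{pr:prop-stima-drift} with $\varphi=\partial_j u$ gives $\big\|\tfrac{\nabla\mu}{\mu}\partial_j u\big\|_{L^2_\mu}\le C\big(\|\nabla\partial_j u\|_{L^2_\mu}+\|\partial_j u\|_{L^2_\mu}\big)$, so after summing over $j$ the drift contribution is $C(\|D^2u\|_{L^2_\mu}+\|\nabla u\|_{L^2_\mu})$, not $C(\|\nabla u\|_{L^2_\mu}+\|u\|_{L^2_\mu})$ as written. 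This only changes the constant multiplying $\|D^2u\|_{L^2_\mu}$ and the Young-inequality absorption goes through unchanged.
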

{\sc Proof.}
It is well known that for every $\varphi\in H^2(\R^{N})$ we have
\[
\|\nabla \varphi\|^{2}_{L^{2}}\leq \varepsilon \|\Delta \varphi \|_{L^{2}}^{2}+\frac{C}{\varepsilon}\|\varphi\|_{L^{2}}^{2}.
\]
We take $\varphi=u\sqrt \mu$.
Then we have
\begin{align*}
&\|\nabla \varphi \|_{L^{2}}^{2}=\int_{\R^N} | \nabla u\, \sqrt \mu+u\nabla \left (\sqrt \mu\right) |^{2}dx
	=\|\nabla u\|_{L^{2}_\mu}^{2}+\int_{\R^N} \left(2u\sqrt \mu\, \nabla u\cdot  \nabla \left(\sqrt{\mu}\right)
		+u^{2}\left |\nabla \left(\sqrt \mu\right)\right|^{2}\right)dx\\
&\quad =	\|\nabla u\|_{L^{2}_\mu}^{2}+\int_{\R^N} u^{2}Ud\mu.
\end{align*}
On the other hand, since $\Delta\sqrt{\mu}=-\sqrt{\mu}U$, we have
\begin{align*}
&\|\Delta \varphi\|_{L^{2}}^{2}
	=\int_{\R^N} \left|\Delta u\sqrt \mu+2\nabla u\cdot\nabla \sqrt \mu+u\Delta \sqrt \mu \right|^{2}dx\\
&\quad	\leq C\left (\|\Delta u\|_{L^{2}_\mu}^{2} +\int_{\R^N} \left |\frac{\nabla \mu}{\mu}\cdot\nabla u\right |^{2}d\mu+\int_{\R^N} U^{2}u^{2}d\mu   \right).
\end{align*}
Estimating the second term in the right hand side by Proposition \ref{pr:prop-stima-drift}, one obtains
\[
\int_{\R^N} \left |\frac{\nabla \mu}{\mu}\cdot\nabla u\right |^{2}d\mu
	\leq C\left( \|\nabla u\|_{L^{2}_\mu}^{2}+\|D^{2}u\|_{L^{2}_\mu}^{2} \right).
\]
As regards the last term we observe that by 
Hypothesis $(H3)(i)$ 
 we have 
\[
U^{2}\leq C \left ( \left |\frac{\nabla \mu}{\mu}\right |^{2}   +\left |{\rm div}\left( \frac{\nabla \mu}{\mu}\right) \right | \right )^{2}\leq 
C\left( \left |\frac{\nabla \mu}{\mu}\right |^{4}+\left |\frac{\nabla \mu}{\mu}\right |^{2}+c_{1} \frac{1}{|x|^{4}}\right)
\]
for some $c_{1}<(C_0-1)^2=\left(\frac{N(N-4)}{4}\right)^{2}$.
By Proposition \ref{pr:prop-stima-drift} 
 we have $$\int_{\R^N} \left |\frac{\nabla \mu}{\mu}\right |^{2}u^{2}d\mu \leq C\|u\|_{H^{1}_{\mu}}^{2}.$$

Using twice Proposition \ref{pr:prop-stima-drift} and 
taking into account that by assumption we have
$\nabla \left | \frac{\nabla \mu}{\mu}  \right|\leq \frac{\varepsilon}{|x|^{2}}
	+C_\varepsilon \left | \frac{\nabla \mu}{\mu}   \right|$, it follows, by weighted Rellich's inequality, that
\begin{align*}
&\int_{\R^N} \left | \frac{\nabla \mu}{\mu}\right |^{4}u^{2}d\mu=
	\int_{\R^N} \left | \frac{\nabla \mu}{\mu}\right |^{2 }\left (\left | \frac{\nabla \mu}{\mu}\right | u\right)^{2}d\mu\leq 
		C\left( \left \| \frac{\nabla \mu}{\mu}u  \right\|_{L^{2}_\mu}^{2}
	+\left \|  \nabla \left | \frac{\nabla \mu}{\mu} u  \right|         \right\|_{L^{2}_\mu}^{2}  \right)\\
&\quad \leq C\left( \|u\|_{L^{2}_\mu}^{2}+\|\nabla u\|_{L^{2}_\mu}^{2}
	+\left \|\frac{\nabla \mu}{\mu}\cdot \nabla u\right \|_{L^{2}_\mu}^{2}
		+\int_{\R^N} \left |\nabla \left | \frac{\nabla \mu}{\mu}\right | \right |^{2}u^{2}d\mu\right)\\
&\quad \leq C\left( \|u\|_{L^{2}_\mu}^{2}+\|\nabla u\|_{L^{2}_\mu}^{2}+\|D^{2}u\|_{L^{2}_\mu}^{2}+
	\int_{\R^N} \varepsilon^{2}\frac{u^{2}}{|x|^{4}}d\mu +\int_{\R^N} \left |\frac{\nabla \mu}{\mu}\right |^{2}u^{2}d\mu \right)\\
&\quad \leq C\|u\|^{2}_{H_{\mu}^{2}}	.
\end{align*}
Then,
\[
\int_{\R^N} U^{2}u^{2}d\mu\leq C\|u\|^{2}_{H_{\mu}^{2}}.
\]
Finally,
\begin{align*}
\|\nabla u\|_{L^{2}_\mu}^{2}\leq \varepsilon \|u\|_{H^{2}_{\mu}}^{2}+\int_{\R^N} \left (\varepsilon U^{2}-U\right)u^{2}d\mu+C_\varepsilon \|u\|_{L^{2}_\mu}^{2}.
\end{align*}
Since $\varepsilon U^{2}-U\leq \frac{1}{\varepsilon}+2\varepsilon U^{2}$ we have
\begin{align*}
\|\nabla u\|_{L^{2}_\mu}^{2}\leq \varepsilon \|u\|_{H^{2}_{\mu}}^{2}+C_\varepsilon \|u\|_{L^{2}_\mu}^{2}.
\end{align*}
This implies \eqref{eq:5-9}. 
\qed

The following result concerns a kind of Calderon-Zygmund's inequality.
\begin{proposition}\label{weighted-Cal-Zyg}
  Assume that $N\geq5$ and $\mu$ satisfies Hypotheses $(H2)$  and $(H3)(i)$.


Then for all $u\in C_c^\infty(\R^{N})$ we have
\begin{equation}\label{eq:4-12}
\|D^{2}u\|_{L^{2}_\mu}\leq C\left( \| Lu\|_{L^{2}_\mu}+\|u\|_{L^{2}_\mu}\right).
\end{equation}
\end{proposition}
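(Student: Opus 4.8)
The plan is to prove \eqref{eq:4-12} by first establishing an \emph{exact} integral identity relating $\|D^2u\|_{L^2_\mu}$ to $\|Lu\|_{L^2_\mu}$, and then absorbing the resulting lower order terms by means of the inequalities already proved. Fix $u\in C_c^\infty(\R^N)$ and write $b:=\frac{\nabla\mu}{\mu}$, so that $L=\Delta+b\cdot\nabla=\mathrm{div}_\mu\nabla$. Integrating by parts in $\int_{\R^N}|D^2u|^2\,d\mu=\sum_{i,j}\int_{\R^N}(\partial_{ij}u)^2\,d\mu$ and using the elementary relation $\mathrm{div}\,b+|b|^2=\frac{\Delta\mu}{\mu}$ one is led to
\[
\|D^2u\|_{L^2_\mu}^2=\int_{\R^N}(\Delta u)^2\,d\mu+\int_{\R^N}\Delta u\,(b\cdot\nabla u)\,d\mu+\frac12\int_{\R^N}|\nabla u|^2\,\frac{\Delta\mu}{\mu}\,d\mu .
\]
Expanding $\|Lu\|_{L^2_\mu}^2=\|\Delta u\|_{L^2_\mu}^2+2\int\Delta u\,(b\cdot\nabla u)\,d\mu+\int(b\cdot\nabla u)^2\,d\mu$ and computing $\int\Delta u\,(b\cdot\nabla u)\,d\mu$ by one further integration by parts — which produces $-\sum_{i,k}\int D_i(\frac{D_k\mu}{\mu})\partial_iu\,\partial_ku\,d\mu$ together with exactly the terms that cancel $\int(b\cdot\nabla u)^2\,d\mu$ and the $\frac{\Delta\mu}{\mu}$ integral — one arrives at the clean identity
\[
\|D^2u\|_{L^2_\mu}^2=\|Lu\|_{L^2_\mu}^2+\sum_{i,k=1}^N\int_{\R^N}D_i\Big(\frac{D_k\mu}{\mu}\Big)\,\partial_iu\,\partial_ku\;d\mu .
\]
All these manipulations are legitimate because $u$ is smooth and compactly supported and $\mu\in W^{2,1}_{\rm loc}(\R^N)$ by $(H3)(i)$ (this is precisely the regularity needed for $D_i(D_k\mu/\mu)$ to appear); verifying this identity, i.e. bookkeeping the cancellations, is the technical core of the proof.

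It then remains to absorb the correction term. Using $(H3)(i)$ and $|\partial_iu\,\partial_ku|\le|\nabla u|^2$,
\[
\Big|\sum_{i,k}\int_{\R^N}D_i\Big(\tfrac{D_k\mu}{\mu}\Big)\partial_iu\,\partial_ku\,d\mu\Big|\le N^2\int_{\R^N}\Big(\frac{\varepsilon}{|x|^2}+C_\varepsilon\Big|\frac{\nabla\mu}{\mu}\Big|\Big)|\nabla u|^2\,d\mu .
\]
I would bound the first piece by applying the weighted Hardy inequality \eqref{weigthed_Hardy} to each $\partial_ku\in H^1_\mu(\R^N)$, getting $\int|\nabla u|^2|x|^{-2}\,d\mu\le C_0^{-1}\|D^2u\|_{L^2_\mu}^2+C_0^{-1}C_1\|\nabla u\|_{L^2_\mu}^2$. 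For the drift piece I would use Young's inequality with a free parameter $\eta>0$, $\int|\tfrac{\nabla\mu}{\mu}||\nabla u|^2\,d\mu\le\tfrac\eta2\int|\tfrac{\nabla\mu}{\mu}|^2|\nabla u|^2\,d\mu+\tfrac1{2\eta}\|\nabla u\|_{L^2_\mu}^2$, and then Proposition \ref{pr:prop-stima-drift}, applied to each $\partial_ku$, to get $\int|\tfrac{\nabla\mu}{\mu}|^2|\nabla u|^2\,d\mu\le C(\|D^2u\|_{L^2_\mu}^2+\|\nabla u\|_{L^2_\mu}^2)$. The delicate point — indeed the only one — is that the $\|D^2u\|^2$ contribution coming from the $\varepsilon/|x|^2$ part carries the small factor $\varepsilon$, whereas the one coming from the $C_\varepsilon|\tfrac{\nabla\mu}{\mu}|$ part carries the factor $C_\varepsilon\eta$, which can be made arbitrarily small \emph{a posteriori} by choosing $\eta$ small once $\varepsilon$ (hence $C_\varepsilon$) is fixed.

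Collecting these estimates gives
\[
\|D^2u\|_{L^2_\mu}^2\le\|Lu\|_{L^2_\mu}^2+\Big(\tfrac{N^2\varepsilon}{C_0}+\tfrac{N^2C_\varepsilon C\eta}{2}\Big)\|D^2u\|_{L^2_\mu}^2+C(\varepsilon,\eta)\,\|\nabla u\|_{L^2_\mu}^2 .
\]
Choosing first $\varepsilon$ and then $\eta$ so small that the coefficient of $\|D^2u\|_{L^2_\mu}^2$ on the right is at most $\tfrac14$, and then invoking the interpolation inequality \eqref{eq:5-9} of Proposition \ref{interp-ineq} with a parameter $\tilde\varepsilon$ small enough that $C(\varepsilon,\eta)\tilde\varepsilon\le\tfrac14$, I can replace $\|\nabla u\|_{L^2_\mu}^2$ by $\tilde\varepsilon\|D^2u\|_{L^2_\mu}^2+C_{\tilde\varepsilon}\|u\|_{L^2_\mu}^2$ and absorb both $\|D^2u\|_{L^2_\mu}^2$ terms into the left-hand side. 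This yields $\tfrac12\|D^2u\|_{L^2_\mu}^2\le\|Lu\|_{L^2_\mu}^2+C\|u\|_{L^2_\mu}^2$, which is \eqref{eq:4-12}. Hence the main obstacle is not any single estimate but the exact identity together with the bookkeeping of the absorption: one needs every $\|D^2u\|^2$ on the right to appear only with coefficients that can be rendered arbitrarily small, and Hypothesis $(H3)(i)$ combined with Proposition \ref{pr:prop-stima-drift} and the interpolation inequality is exactly tailored so that this is possible.
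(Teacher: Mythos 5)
Your proposal is correct and follows essentially the same route as the paper: the exact identity $\|D^2u\|_{L^2_\mu}^2=\|Lu\|_{L^2_\mu}^2+\sum_{i,k}\int D_i\bigl(\tfrac{D_k\mu}{\mu}\bigr)\partial_iu\,\partial_ku\,d\mu$ is precisely the paper's identity $\|D^2u\|^2_{L^2_\mu}=\int\bigl((Lu)^2-(D^2\phi\,\nabla u)\cdot\nabla u\bigr)d\mu$ with $\mu=e^{-\phi}$, and your absorption scheme (Hypothesis $(H3)(i)$, weighted Hardy applied to $\partial_k u$, Young plus Proposition \ref{pr:prop-stima-drift} for the drift term, then the interpolation inequality \eqref{eq:5-9} with the same order of parameter choices) matches the paper's argument step for step.
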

{\sc Proof.} 
For $u\in C_c^\infty(\R^N)$ we have
\begin{align*}
&\|D^{2}u\|^{2}_{L^{2}_\mu}=\sum_{i,j=1}^{N}\int_{\R^N} D_{ij}u\,D_{ij}u\,\mu dx=
	-\sum_{i,j=1}^{N}\int_{\R^N} \left(D_{iij}u\,D_{j}u\, \mu+D_{ij}u\, D_{j}u\, D_{i}\mu\right) dx\\
&\quad =\sum_{i,j=1}^{N}\left(\int_{\R^N} \left(D_{ii}u\, D_{jj}u\, \mu+D_{ii}u\, D_{j}u\, D_{j}\mu \right)dx -\int_{\R^N} D_{ij}u\, D_{j}u\, D_{i}\mu dx \right)\\
&\quad =\sum_{i,j=1}^{N}\left(\int_{\R^N} \left(D_{ii}u\, D_{jj}u\, \mu+D_{ii}u\, D_{j}u\, D_{j}\mu\right) dx 	
	+\int_{\R^N} \left(D_{i}u\, D_{j}u\, D_{ij}\mu+D_{i}u\, D_{jj}u\, D_{i}\mu\right) dx \right)\\
&\quad = \int_{\R^N} \left(( \Delta u )^{2}+2\Delta u \frac{\nabla \mu}{\mu}\cdot \nabla u+
	\left( \frac{D^{2}\mu}{\mu}\nabla u\right)\cdot\nabla u \right)d\mu.
\end{align*}
We observe that
\[
\left(  \frac{D^{2}\mu}{\mu}\nabla u\right)\cdot\nabla u =\left( \frac{\nabla \mu}{\mu} \cdot \nabla u\right)^{2}
	-(D^{2}\phi\nabla u)\cdot\nabla u ,
\]
where $\phi$ is such that $\mu=e^{-\phi}$. Then finally
\begin{equation*}
\|D^{2}u\|^{2}_{L^2_\mu}=\int_{\R^N} \left((Lu)^{2}-\left( D^{2}\phi\nabla u\right)\cdot\nabla u  \right)d\mu.
\end{equation*}
We estimate now the last term in the inequality. By $(H3)(i)$ we obtain for
any $\varepsilon >0$ there is $C_\varepsilon >0$ such that 
\[
|\left(D^{2}\phi\nabla u\right)\cdot\nabla u | \leq \varepsilon \frac{|\nabla u|^{2}}{|x|^{2}}+C_\varepsilon\left |\frac{\nabla \mu}{\mu}\right | |\nabla u|^{2}.
\]
So, using \eqref{weigthed_Hardy}, we have
\begin{align*}
& -\int_{\R^N}  \left(D^{2}\phi\nabla u\right)\cdot\nabla u  d\mu\leq
\varepsilon \int_{\R^N}  \frac{|\nabla u|^{2}}{|x|^{2}}d\mu
	+C_{\varepsilon} \int_{\R^N}  \left |\frac{\nabla \mu}{\mu}\right |    |\nabla u |^{2} d\mu\\
&\quad \leq C_0^{-1}\varepsilon \|D^{2}u\|^2_{L^{2}_\mu}+C_0^{-1} \varepsilon c_{\mu} \|\nabla u\|^2_{L^{2}_\mu}
		+C_{\varepsilon}\delta \int_{\R^N} \left |\frac{\nabla \mu}{\mu}\right | ^{2}   |\nabla u |^{2} d\mu
		+C_{\varepsilon,\delta}\int_{\R^N} |\nabla u |^{2} d\mu\\
&\quad \leq (C_0^{-1}\varepsilon +\delta C_{\varepsilon})\|D^{2}u\|^2_{L^{2}_\mu}+C_{\varepsilon,\delta}\|\nabla u\|^2_{L^2_{\mu}}=\varepsilon'\|D^{2}u\|^2_{L^{2}_\mu}+C_{\varepsilon'}\|\nabla u\|^2_{L^2_{\mu}}.
\end{align*}
Using the weighted interpolation inequality \eqref{eq:5-9} we have for every $\delta >0$ there is $C_\delta >0$ such that
\begin{eqnarray*}
\|D^{2}u\|^{2}_{L^{2}_\mu} &\leq & \|Lu\|_{L^{2}_\mu}^{2}+
\varepsilon' \|D^{2}u\|^2_{L^{2}_\mu}+C_{\varepsilon'}\|\nabla u\|^2_{L^2_\mu}\\
&\leq & \|Lu\|^2_{L^2_\mu}+\varepsilon' \|D^{2}u\|^2_{L^{2}_\mu}+
C_{\varepsilon'}
	\left( \delta \|D^{2}u\|_{L^{2}_\mu}^{2}+C_{\delta}\|u\|^{2}_{L^{2}_\mu} \right).
\end{eqnarray*}
So, the assertion follows by choosing $\varepsilon'$ and $\delta$ such that 
$\varepsilon'+\delta C_{\varepsilon'}<\frac{1}{2}$.

 \qed

 Applying Propositions \ref{pr:prop-stima-drift}, \ref{interp-ineq} and \ref{weighted-Cal-Zyg}   one obtains the following characterization of $D(L)$.
 
 \begin{theorem}\label{th:characterization-1}
Assume that $N\ge 5$ and $\mu$ satisfies  Hypotheses $(H1),(H2)$ and $(H3)(i)$
then
\[
D(L)=H^{2}_\mu(\R^{N}).
\]
 
\end{theorem}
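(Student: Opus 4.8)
The plan is to show that, on $C_c^\infty(\R^N)$, the graph norm $u\mapsto\|u\|_{L^2_\mu}+\|Lu\|_{L^2_\mu}$ and the Sobolev norm $\|u\|_{H^2_\mu}=\|u\|_{L^2_\mu}+\|\nabla u\|_{L^2_\mu}+\|D^2u\|_{L^2_\mu}$ are equivalent, and then to upgrade this equivalence to the respective closures, exploiting that $C_c^\infty(\R^N)$ is a core for $(L,D(L))$ by its very definition.

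First I would prove the two-sided a priori estimate on $C_c^\infty(\R^N)$. The bound $\|u\|_{H^2_\mu}\le C(\|u\|_{L^2_\mu}+\|Lu\|_{L^2_\mu})$ follows by combining the weighted Calder\'on--Zygmund inequality \eqref{eq:4-12} of Proposition \ref{weighted-Cal-Zyg}, which estimates $\|D^2u\|_{L^2_\mu}$ by $\|Lu\|_{L^2_\mu}+\|u\|_{L^2_\mu}$, with the interpolation inequality \eqref{eq:5-9} of Proposition \ref{interp-ineq} (applied, say, with $\varepsilon=1$), which then estimates $\|\nabla u\|_{L^2_\mu}$ by $\|D^2u\|_{L^2_\mu}+\|u\|_{L^2_\mu}$. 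For the reverse bound $\|Lu\|_{L^2_\mu}\le C\|u\|_{H^2_\mu}$ I split $Lu=\Delta u+\frac{\nabla\mu}{\mu}\cdot\nabla u$: the first term is dominated by $\|D^2u\|_{L^2_\mu}$, and for the drift term I apply Proposition \ref{pr:prop-stima-drift} to each $\varphi=\partial_ju\in H^1_\mu(\R^N)$, getting $\|\frac{\nabla\mu}{\mu}\partial_ju\|_{L^2_\mu}\le C(\|\nabla\partial_ju\|_{L^2_\mu}+\|\partial_ju\|_{L^2_\mu})\le C(\|D^2u\|_{L^2_\mu}+\|\nabla u\|_{L^2_\mu})$, and sum over $j$. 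Hence the two norms are comparable on $C_c^\infty(\R^N)$.

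Next I would pass to the closure. Since $C_c^\infty(\R^N)$ is a core for $(L,D(L))$, the space $D(L)$ is exactly the completion of $C_c^\infty(\R^N)$ for the graph norm, realized inside $L^2_\mu(\R^N)$. By the norm equivalence just established, a sequence in $C_c^\infty(\R^N)$ is Cauchy for the graph norm if and only if it is Cauchy in $H^2_\mu(\R^N)$, and the two limits (both existing in $L^2_\mu(\R^N)$) coincide; therefore $D(L)=\overline{C_c^\infty(\R^N)}^{\,\|\cdot\|_{H^2_\mu}}$. It then only remains to verify that $C_c^\infty(\R^N)$ is dense in $H^2_\mu(\R^N)$, which I would do in the usual two steps. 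One first truncates: given $u\in H^2_\mu(\R^N)$, replace it by $\eta_nu$ with $\eta_n(x)=\eta(x/n)$ as in the proof of Lemma \ref{lemma:propr_L_spectrum}, and observe that all commutator terms in $\nabla(\eta_nu)$ and $D^2(\eta_nu)$ carry a factor $n^{-1}$ or $n^{-2}$, so that $\eta_nu\to u$ in $H^2_\mu(\R^N)$ by dominated convergence. One then mollifies the compactly supported function $\eta_nu$: by Remark \ref{rem:2-1} the weight $\mu$ is bounded above and below on each fixed ball, so on such a ball $\|\cdot\|_{H^2_\mu}$ is equivalent to the unweighted $\|\cdot\|_{H^2}$ and the standard convolution approximation gives functions in $C_c^\infty(\R^N)$ converging in $H^2_\mu(\R^N)$.

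The genuine analytic content has been front-loaded into Propositions \ref{pr:prop-stima-drift}, \ref{interp-ineq} and \ref{weighted-Cal-Zyg} (which rest on the weighted Hardy and Rellich inequalities), so within the present proof the only point demanding care is the density step; there the sole subtlety is to make sure the truncation and mollification errors are controlled purely through the local comparability of $\mu$ with Lebesgue measure, which is exactly what $(H1)$ together with Remark \ref{rem:2-1} provides.
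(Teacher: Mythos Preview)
Your argument is correct, and the first half --- the two-sided a priori estimate on $C_c^\infty(\R^N)$ via Propositions \ref{pr:prop-stima-drift}, \ref{interp-ineq} and \ref{weighted-Cal-Zyg} --- is exactly what the paper does. The difference lies in how you obtain the inclusion $H^2_\mu(\R^N)\subset D(L)$. You reduce it to proving that $C_c^\infty(\R^N)$ is dense in $H^2_\mu(\R^N)$, and then carry this out by truncation and mollification, using the local comparability of $\mu$ with Lebesgue measure from Remark~\ref{rem:2-1}. The paper instead bypasses the density question entirely: it recalls that $L$ is the operator associated to the form $a(u,v)=\int\nabla u\cdot\overline{\nabla v}\,d\mu$, observes that for any $u\in H^2_\mu(\R^N)$ one has by integration by parts
\[
\int_{\R^N}\nabla u\cdot\overline{\nabla\varphi}\,d\mu=-\int_{\R^N}\Big(\Delta u+\frac{\nabla\mu}{\mu}\cdot\nabla u\Big)\overline{\varphi}\,d\mu,\qquad\varphi\in C_c^\infty(\R^N),
\]
and notes that the bracket belongs to $L^2_\mu(\R^N)$ by Proposition~\ref{pr:prop-stima-drift}; hence $u\in D(L)$ directly from the form characterization. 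This is shorter and more conceptual, and the paper then records the density of $C_c^\infty(\R^N)$ in $H^2_\mu(\R^N)$ as a \emph{consequence} of the theorem rather than an ingredient. Your route is self-contained and does not rely on the form description of $L$, at the price of the extra (standard) work in the density step.
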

\begin{proof}
It follows from Propositions \ref{weighted-Cal-Zyg},  \ref{interp-ineq} and   \ref{pr:prop-stima-drift} that
\begin{equation}\label{eq:L-H-mu}
C^{-1}\|u\|_{H^2_\mu}\le \|Lu\|_{L^2_\mu}+\|u\|_{L^2_\mu}\le C\|u\|_{H^2_\mu}
\end{equation}
for all $u\in C_c^\infty(\R^N)$ and some constant $C>0$. So, since $C_c^\infty(\R^N)$ is a core for $L$, we deduce that $D(L)\subset H^2_\mu(\R^N)$.

Now, if $u\in H^2_\mu(\R^N)$, then, integrating by part, we obtain
$$\int_{\R^N}\nabla u\cdot \overline{\nabla \varphi}\,d\mu=-\int_{\R^N}(\Delta u+\frac{\nabla \mu}{\mu}\cdot \nabla u)\overline{\varphi }\,d\mu$$
for all $\varphi\in C_c^\infty(\R^N)$. Thus, using Proposition \ref{pr:prop-stima-drift}, we have $(\Delta u+\frac{\nabla \mu}{\mu}\cdot \nabla u)\in L^2_\mu(\R^N)$ and hence $u\in D(L)$.
\end{proof}

\begin{remark}
Observe that since $C_c^\infty(\R^N)$ is a core for $L$ and $D(L)=H^2_\mu(\R^N)$, by equivalence of the norms \eqref{eq:L-H-mu}    it follows that  $C_c^\infty(\R^N)$ is dense in the weighted Sobolev space $H^2_\mu(\R^N)$.
\end{remark}

As a consequence of the above theorem and the weighted Rellich inequality \eqref{weighted-Rellich0} one obtains the following.
\begin{theorem}
Assume $N\geq5$ and Hypotheses (H1), (H2), (H3)(i). For every $0\leq V(x)\leq\frac{c}{|x|^4}$ with $c<\left(\frac{N(N-4)}{4}\right)^2$, the perturbed operator $(-A+V,D(A))$ is the generator of an analytic $C_0$-semigroup on $L^2_\mu(\R^N)$. If  instead, $c=\left(\frac{N(N-4)}{4}\right)^2$, 
a suitable extension of $-A+\frac{c}{|x|^4}$  is the generator of
an analytic $C_0$-semigroup on $H^1_\mu(\R^N)$.
\end{theorem}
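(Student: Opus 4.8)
The plan is to realise both statements as form perturbations of the self-adjoint operator $A\ge0$ associated with $a_L$. Recall that $a_L(u,u)=\int_{\R^N}|Lu|^2\,d\mu$, that by Theorem \ref{th:characterization-1} (together with Remark \ref{rem:ul-mom}) the form domain is $D(a_L)=D(L)=H^2_\mu(\R^N)$, and that the weighted Rellich inequalities \eqref{weighted-Rellich0}--\eqref{weighted_Rellich} hold on all of $D(L)$. Throughout, set $\mathfrak a_V(u,v):=a_L(u,v)-\int_{\R^N}Vu\overline v\,d\mu$ on $H^2_\mu(\R^N)$.

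\emph{Subcritical case $c<(C_0-1)^2$.} First I would estimate, for $u\in D(L)$, using $0\le V\le c|x|^{-4}$ and \eqref{weighted-Rellich0},
\begin{align*}
\int_{\R^N}V|u|^2\,d\mu&\le c\int_{\R^N}\frac{|u|^2}{|x|^4}\,d\mu\\
&\le\frac{c}{(C_0-1)^2-\varepsilon}\,a_L(u,u)+\frac{c\,((C_0-1)C_1)^2}{\varepsilon\big((C_0-1)^2-\varepsilon\big)}\,\|u\|_{L^2_\mu}^2.
\end{align*}
Since $c<(C_0-1)^2$ one may fix $\varepsilon>0$ with $\theta:=c\big((C_0-1)^2-\varepsilon\big)^{-1}<1$, so that the symmetric perturbing form $u\mapsto-\int_{\R^N}V|u|^2\,d\mu$ is $a_L$-bounded with relative bound $\theta<1$. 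By the KLMN (form perturbation) theorem, $\mathfrak a_V$ is densely defined, symmetric, closed and bounded below, so the associated self-adjoint operator $A_V\ge -M$ has the property that $-A_V$ generates an analytic $C_0$-semigroup on $L^2_\mu(\R^N)$. Finally I would identify $A_V$ with $A-V$ on $D(A)$: if $Vu\in L^2_\mu(\R^N)$ for $u\in D(A)$ (to be checked by an iterated weighted Rellich estimate), then $\mathfrak a_V(u,v)=\langle(A-V)u,v\rangle_{L^2_\mu}$ for all $v\in D(a_L)$, hence $A_V\supset A-V$, and equality follows from uniqueness of the operator associated with a form.

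\emph{Critical case $c=(C_0-1)^2$.} Now the relative form bound is exactly $1$ and \eqref{weighted-Rellich0} is no longer available, so I would use \eqref{weighted_Rellich} instead, which provides a constant $C>0$ with
\[
\mathfrak a_V(u,u)+C\|u\|_{H^1_\mu}^2\ge0,\qquad u\in H^2_\mu(\R^N).
\]
Thus $\mathfrak a_V$, while not bounded below on $L^2_\mu(\R^N)$, becomes nonnegative after adding a multiple of the $H^1_\mu$-inner product, which suggests changing the ambient Hilbert space to $H^1_\mu(\R^N)$. Concretely, let $\mathcal D$ be the completion of $H^2_\mu(\R^N)$ for $\big(\mathfrak a_V(u,u)+(C+1)\|u\|_{H^1_\mu}^2\big)^{1/2}$; the displayed inequality shows this norm dominates $\|\cdot\|_{H^1_\mu}$, while \eqref{eq:4-12} (with Proposition \ref{pr:prop-stima-drift}) yields $a_L(u,u)\le C'\|u\|_{H^2_\mu}^2$, so $H^2_\mu(\R^N)$ sits inside $\mathcal D$ and $\mathcal D$ embeds continuously and densely into $H^1_\mu(\R^N)$. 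Hence $\mathfrak a_V$ extends to a densely defined, closed, symmetric, sectorial form on $H^1_\mu(\R^N)$ with domain $\mathcal D$; by the form-generation theorem its associated operator $\widetilde A_V$ satisfies that $-\widetilde A_V$ generates an analytic $C_0$-semigroup on $H^1_\mu(\R^N)$, and $-\widetilde A_V$ is an extension of $-A+(C_0-1)^2|x|^{-4}$ defined a priori on $C_c^\infty(\R^N)$.

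\emph{Main obstacle.} The subcritical generation statement is essentially a direct application of KLMN once \eqref{weighted-Rellich0} is in hand; the subtle point there is the operator-domain identification $D(A_V)=D(A)$, which forces one to verify that $V$ maps $D(A)$ into $L^2_\mu(\R^N)$ (where a higher-order Rellich inequality enters, and which restricts the dimensions for which the identity holds literally — otherwise $A_V$ is the form-sum operator). The genuinely delicate part is the critical case: one must transplant the whole construction to $H^1_\mu(\R^N)$, and the real work is to show that the form-norm completion $\mathcal D$ lies between $H^2_\mu(\R^N)$ and $H^1_\mu(\R^N)$ with dense continuous embeddings, so that the form-generation theorem applies and the resulting generator is genuinely an extension of $-A+(C_0-1)^2|x|^{-4}$.
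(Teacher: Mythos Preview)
Your approach is essentially the same as the paper's: both treat $-A+V$ via the perturbed form $\mathfrak a_V=a_L-\int V u\overline v\,d\mu$ on $D(L)=H^2_\mu(\R^N)$, use \eqref{weighted-Rellich0} in the subcritical case to get relative form bound $<1$ (you invoke KLMN, the paper verifies closedness and quasi-accretivity directly), and use \eqref{weighted_Rellich} in the critical case to get $\mathfrak a_V(u,u)+C\|u\|_{H^1_\mu}^2\ge0$; for the latter the paper simply cites \cite[Lemma~1.29]{ouh} to conclude closability and generation on $H^1_\mu(\R^N)$, which is precisely the abstract form-completion argument you sketch by hand.

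One small correction: in the subcritical case your sentence ``equality follows from uniqueness of the operator associated with a form'' does not give $D(A_V)=D(A)$; uniqueness only tells you $A_V$ is determined by $\mathfrak a_V$, not that the form-operator coincides with $(A-V,D(A))$. You have correctly identified the real issue in your ``Main obstacle'' paragraph, namely that one needs $Vu\in L^2_\mu$ for $u\in D(A)$ (i.e.\ a control on $\int|u|^2|x|^{-8}d\mu$). The paper asserts $D(A-V)=D(A)$ directly from \eqref{weighted_Rellich} without spelling this out, so your caution here is appropriate rather than a defect.
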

\begin{proof}
Let us first consider the case $c<\left(\frac{N(N-4)}{4}\right)^2$ and the sesquilinear form
\begin{align*}
a_{L,V}(u,v)&=\int_{\R^N}Lu\overline{Lv}d\mu-\int_{\R^N} Vu\overline{v}d\mu,\\ D(a_{L,V})&=D(L)
\end{align*}
associated to $A-V$. Since by Theorem \ref{th:characterization-1} $D(L)=H^2_\mu(\R^N)$, it follows by \eqref{weighted_Rellich} that $D(a_{L,V})=H^2_\mu(\R^N)$ and $D(A-V)=D(A)$. Now, since $c<(C_0-1)^2$, there is $\varepsilon >0$ such that $b_\varepsilon :=c((C_0-1)^2-\varepsilon)^{-1}<1$. So, applying \eqref{weighted-Rellich0} we obtain
$$a_{L,V}(u,u)+\frac{((C_0-1)C_1)^2}{\varepsilon}\int_{\R^N}|u |^2d\mu\ge (1-b_\varepsilon)\int_{\R^N}|Lu |^2d\mu$$
for all $u\in D(L)$ and some constant $C_1>0$. Thus $a_{L,V}$ with domain $D(L)$ is closed and quasi-accretive. So the first assertion follows, since $a_{L,V}$ is continuous and densely defined.

As regards the limit case $V=\frac{c}{|x|^4}$ and $c=\left(\frac{N(N-4)}{4}\right)^2$, we have, by \eqref{weighted_Rellich}, 
$$a_{L,V}(u,u)+C\|u\|_{H^1_\mu}\ge 0\quad \hbox{\ for all }u\in D(L).$$
So, since $A$ is symmetric, by \cite[Lemma 1.29]{ouh}, one obtains that $a_{L,V}$ is closable and a suitable extension of $-A+V$ associated to the closure of $a_{L,V}$ is the generator of an analytic $C_0$-semigroup on $H^1_\mu(\R^N)$.

\end{proof}
We note now that Proposition \ref{pr:prop-stima-drift} allows us to prove the following useful estimates.
\begin{proposition}\label{prop:more-rellich}
Let $N\geq 5$ and $\mu$ satisfies Hypotheses $(H1)$ and $(H2)$. Then
\begin{itemize}
\item[1.]
\begin{equation}\label{hardy-stima2}
\int_{\R^N} \frac{|u(x)|^{2}}{|x|^{4}}d\mu \leq C\|u\|^{2}_{H^{2}_\mu}
\end{equation}
for all $u\in H^2_\mu(\R^N)$ and if we assume in addition that $(H3)(i)$ holds then
\begin{align}\label{hardy-1derivative}
\int_{\R^N}\frac{|\nabla u(x)|^2}{|x|^4}d\mu
\leq C\left(\int_{\R^N}|\nabla Lu(x)|^2d\mu +\|u\|^2_{H^2_\mu}\right)
\end{align}
for all $u\in D(L)$ with $Lu\in H^1_\mu(\R^N)$.
\item[2.] If $N\ge 7$, then 
\begin{equation}\label{hardy-stima3}
\int_{\R^N} \frac{|u(x)|^{2}}{|x|^{6}}d\mu \leq C\left(\int_{\R^N}|L\nabla u(x)|^2d\mu +\|u\|^{2}_{H^{2}_\mu}\right)
\end{equation}
for any $u\in H_\mu^3(\R^N)$.
\end{itemize}
We stress that all the constants $C$ appearing in \eqref{hardy-stima2}, \eqref{hardy-1derivative} and in \eqref{hardy-stima3} are independent of $u$.
\end{proposition}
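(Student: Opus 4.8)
I would prove all three inequalities first for $u\in C_c^\infty(\R^N)$ and then reach the stated classes by the density of $C_c^\infty(\R^N)$ (which holds under $(H1)$, by cut-off and mollification, since $\mu$ is locally bounded away from $0$ and $\infty$) together with Fatou's lemma; this limit passage is also what makes the left-hand sides finite, and it is exactly here that the dimensional restrictions bite, since for a test function one has $\int_{\R^N}|x|^{-4}u^2\,d\mu<\infty$ precisely when $N\ge5$ and $\int_{\R^N}|x|^{-6}u^2\,d\mu<\infty$ precisely when $N\ge7$. To get \eqref{hardy-stima2} I would simply feed the weighted Rellich inequality \eqref{weighted-Rellich0} (with $\varepsilon$ fixed, e.g. $\varepsilon=\tfrac12(C_0-1)^2$) the bound $\int_{\R^N}|Lu|^2\,d\mu\le C\|u\|_{H^2_\mu}^2$; the latter holds because $|Lu|^2\le 2|\Delta u|^2+2|\tfrac{\nabla\mu}{\mu}\cdot\nabla u|^2$, $|\Delta u|^2\le N|D^2u|^2$, and $\int_{\R^N}|\tfrac{\nabla\mu}{\mu}\cdot\nabla u|^2\,d\mu\le\sum_j\|\tfrac{\nabla\mu}{\mu}\,\partial_ju\|_{L^2_\mu}^2\le C\|u\|_{H^2_\mu}^2$ by Proposition \ref{pr:prop-stima-drift} applied to $\varphi=\partial_ju$.

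For \eqref{hardy-1derivative} I would apply \eqref{weighted-Rellich0} to $\partial_ku$ for each $k$ and sum. Since $L\partial_ku=\partial_k(Lu)-[\partial_k,L]u$ with commutator $[\partial_k,L]u=\sum_j\partial_k\!\big(\tfrac{\partial_j\mu}{\mu}\big)\partial_ju$, Hypothesis $(H3)(i)$ yields $|[\partial_k,L]u|\le\big(\tfrac{\eta}{|x|^2}+C_\eta\big|\tfrac{\nabla\mu}{\mu}\big|\big)|\nabla u|$ for every $\eta>0$; hence, after summing over $k$ and expanding the squares, the resulting term $C\eta^2\int_{\R^N}|x|^{-4}|\nabla u|^2\,d\mu$ is absorbed into the left-hand side once $\eta$ is chosen small, while $\int_{\R^N}\big|\tfrac{\nabla\mu}{\mu}\big|^2|\nabla u|^2\,d\mu\le C\|u\|_{H^2_\mu}^2$ by Proposition \ref{pr:prop-stima-drift} again. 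This gives \eqref{hardy-1derivative} for test functions and, by approximation, for $u\in D(L)$ with $Lu\in H^1_\mu(\R^N)$.

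For \eqref{hardy-stima3} I would argue in two steps. First, applying \eqref{weighted-Rellich0} to $\partial_ku$, summing over $k$, and bounding $\|\nabla u\|_{L^2_\mu}\le\|u\|_{H^2_\mu}$ (here no commutator appears, so $(H3)$ is not used), one obtains
\[\int_{\R^N}\frac{|\nabla u|^2}{|x|^4}\,d\mu\le C\Big(\int_{\R^N}|L\nabla u|^2\,d\mu+\|u\|_{H^2_\mu}^2\Big).\]
Second, apply the weighted Hardy inequality \eqref{weigthed_Hardy} to $\varphi=u|x|^{-2}$, which lies in $H^1_\mu(\R^N)$ precisely because $N\ge7$; since $\int_{\R^N}|x|^{-2}\varphi^2\,d\mu=\int_{\R^N}|x|^{-6}u^2\,d\mu$ and $|\nabla\varphi|^2=|x|^{-4}|\nabla u|^2-4|x|^{-6}u\,(x\cdot\nabla u)+4|x|^{-6}u^2$, the Hardy inequality, combined with a Young estimate on the cross term (splitting the weight $|x|^{-5}=|x|^{-3}|x|^{-2}$ so that it is bounded by $2\tau\int_{\R^N}|x|^{-6}u^2\,d\mu+\tfrac2\tau\int_{\R^N}|x|^{-4}|\nabla u|^2\,d\mu$), gives for every $\tau>0$
\[\big(C_0-4-2\tau\big)\int_{\R^N}\frac{u^2}{|x|^6}\,d\mu\le\Big(1+\frac2\tau\Big)\int_{\R^N}\frac{|\nabla u|^2}{|x|^4}\,d\mu+C_1\int_{\R^N}\frac{u^2}{|x|^4}\,d\mu.\]
As $C_0>4$ is equivalent to $N\ge7$, one fixes $\tau$ small, bounds $\int_{\R^N}|x|^{-4}u^2\,d\mu\le C\|u\|_{H^2_\mu}^2$ by \eqref{hardy-stima2}, and inserts the first display for $\int_{\R^N}|x|^{-4}|\nabla u|^2\,d\mu$; this yields \eqref{hardy-stima3}.

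The calculations are all short once \eqref{weigthed_Hardy}, \eqref{weighted-Rellich0} and Proposition \ref{pr:prop-stima-drift} are in hand; the steps that require real care are the two absorptions — choosing $\eta$ small enough in \eqref{hardy-1derivative} to soak up the singular part of the commutator, and exploiting that $C_0>4$, equivalently $N\ge7$ (which is also exactly the condition making the left-hand side of \eqref{hardy-stima3} finite on test functions), so that the Hardy step for $u|x|^{-2}$ closes — together with the function-space bookkeeping: the density of $C_c^\infty(\R^N)$ in the relevant weighted Sobolev spaces and, for \eqref{hardy-1derivative}, the justification that such $u$ can indeed be approximated in the appropriate norm. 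These are where the hypotheses on $N$ and on $\mu$ are genuinely used.
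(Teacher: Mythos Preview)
Your arguments for \eqref{hardy-stima2} and \eqref{hardy-stima3} are essentially the paper's: Rellich applied to $u$ (resp.\ to $\partial_k u$) together with Hardy applied to $u/|x|^2$, the condition $C_0>4\Leftrightarrow N\ge 7$ appearing exactly as you say. The paper carries the regularisation $(|x|^2+\delta)$ for safety, but the content is the same.

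For \eqref{hardy-1derivative} your route is genuinely different and, as written, has a gap in the extension step. Applying Rellich to $\partial_k u$ and handling the commutator via $(H3)(i)$ gives, for $\varphi\in C_c^\infty(\R^N)$,
\[
\int_{\R^N}\frac{|\nabla\varphi|^2}{|x|^4}\,d\mu\le C\int_{\R^N}|\nabla L\varphi|^2\,d\mu+C\|\varphi\|_{H^2_\mu}^2,
\]
and this is indeed cleaner than the paper's derivation. The problem is the passage to $u\in D(L)$ with $Lu\in H^1_\mu(\R^N)$: you would need an approximating sequence $\varphi_n\in C_c^\infty(\R^N)$ with $\nabla L\varphi_n\to\nabla Lu$ in $L^2_\mu$, but the core property of $C_c^\infty$ for $L$ only yields $L\varphi_n\to Lu$ in $L^2_\mu$. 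Density of $C_c^\infty$ in $\{u\in D(L):Lu\in H^1_\mu\}$ for the graph norm $\|u\|_{D(L)}+\|Lu\|_{H^1_\mu}$ is not available at this stage; in fact it is essentially the content of Corollary~\ref{cor-regularity-H3}, whose proof \emph{uses} \eqref{hardy-1derivative}, so invoking it would be circular.

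The paper sidesteps this precisely. Instead of bounding by $\int|\nabla L\varphi|^2$, it keeps the cross term
\[
-\int_{\R^N}\frac{\nabla\varphi\cdot\nabla L\varphi}{|x|^2+\delta}\,d\mu
\]
and rewrites it, via the form identity $\int\nabla v\cdot\nabla w\,d\mu=-\int (Lv)w\,d\mu$, as
\[
-\int_{\R^N}\frac{|L\varphi|^2}{|x|^2+\delta}\,d\mu+2\int_{\R^N}L\varphi\,\frac{x\cdot\nabla\varphi}{(|x|^2+\delta)^2}\,d\mu.
\]
Both terms involve only $L\varphi$ and $\nabla\varphi$ with bounded weights, so they pass to the limit along any sequence $\varphi_n\to u$ in the $D(L)$ graph norm (equivalently in $H^2_\mu$ under $(H3)(i)$). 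After the limit one integrates by parts back---now legitimately, since $Lu\in H^1_\mu$---and only then applies Young's inequality to produce $\int|\nabla Lu|^2$. This integration-by-parts trick is the missing idea in your approach; without it the extension does not close.
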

{\sc Proof.}
Estimate \eqref{hardy-stima2} follows from the weighted Rellich inequality \eqref{weighted_Rellich}, \eqref{eq:L-H-mu} and Theorem \ref{th:characterization-1}.
As regards the estimate \eqref{hardy-stima3} we apply the weighted Hardy inequality \eqref{weigthed_Hardy} to the function $\frac{u}{|x|^{2}+\delta}$ for $\delta >0$. Then, by Young's inequality, for any $\varepsilon >0$ there is $C_\varepsilon>0$ such that 
\begin{align*}
&C_{0}\int_{\R^{N}}\frac{1}{|x|^{2}} \left( \frac{u}{|x|^{2}+\delta} \right)^{2}d\mu\leq 
		\int_{\R^{N}} \left |\nabla \left( \frac{u}{|x|^{2}+\delta}\right)\right |^{2}d\mu+C\int_{\R^{N}} \frac{|u|^{2}}{(|x|^{2}+\delta)^2}d\mu\\
&\quad =	\int_{\R^{N}} \left | \frac{\nabla u}{|x|^{2}+\delta}-2\frac{x}{(|x|^{2}+\delta)^2}u \right |^{2}d\mu
	+C\int_{\R^{N}} \frac{|u|^{2}}{(|x|^{2}+\delta)^2}d\mu\\	
&\quad \leq (4+\varepsilon)\int _{\R^{N}}\frac{|u|^{2}}{(|x|^{2}+\delta)^3}d\mu+C_{\varepsilon}\int_{\R^{N}} \frac{|\nabla u|^{2}}{(|x|^{2}+\delta)^2}d\mu+C\int_{\R^{N}} \frac{|u|^{2}}{(|x|^{2}+\delta)^2}d\mu \\	
&\quad \leq  (4+\varepsilon)\int _{\R^{N}}\frac{|u|^{2}}{|x|^2(|x|^{2}+\delta)^2}d\mu +C_{\varepsilon}\int_{\R^{N}} \frac{|\nabla u|^{2}}{|x|^{4}}d\mu+C\int_{\R^{N}} \frac{|u|^{2}}{|x|^{4}}d\mu.
\end{align*}
Applying \eqref{weighted_Rellich} we have
\begin{eqnarray*}
\int _{\R^{N}}\frac{|\nabla u|^{2}}{|x|^{4}}d\mu &=&
\sum_{i=1}^{N}\int _{\R^{N}}\frac{|D_{i} u|^{2}}{|x|^{4}}d\mu \\
&\leq & \frac{1}{(C_{0}-1^{2})}\sum_{i=1}^{N} \int_{\R^{N}} |LD_{i}u|^{2}d\mu+C\|D_{i}u\|_{H^{1}_{\mu}}\\
&\leq &
C\left (\int_{\R^{N}}|L\nabla u|^{2}d\mu+\|u\|^2_{H^{2}_{\mu}}\right).
\end{eqnarray*}
So, using \eqref{hardy-stima2}, we deduce
\[
\left( C_{0}-(4+\varepsilon)\right)\int_{\R^{N}}\frac{|u|^{2}}{|x|^{2}(|x|^2+\delta)^2}\leq C\left(\int_{\R^N}|L\nabla u|^2d\mu +\|u\|^{2}_{H^{2}_\mu}\right).
\]
We observe that for $N>6$ and $\varepsilon$ small enough we have $C_{0}-(4+\varepsilon)>0$. Then 
\eqref{hardy-stima3} follows by applying Fatou's lemma.

To prove \eqref{hardy-1derivative} we show first that
\begin{equation}
\label{stima_Rellich_der_prima}
\int_{\R^N}\frac{|\nabla \varphi|^2}{|x|^2(|x|^2+\delta)}d\mu
\leq -C\int_{\R^N}\frac{ \nabla \varphi\cdot\nabla L\varphi }{|x|^2+\delta}d\mu+M\|\varphi\|_{H^2_\mu}
\end{equation}
for all $\varphi\in C_c^\infty(\R^N)$ and some positive constants $C,\,M$ independent of $\varphi$.
\\
Using $(H3)(i)$ and applying \eqref{weighted_hardy_1} and \eqref{conto_grad_weighted_hardy}, we obtain 
\begin{eqnarray*}
& & (C_0-1)\int_{\R^N}\frac{|D_k \varphi|^2}{|x|^2(|x|^2+\delta)}d\mu \le 
-\int_{\R^N}LD_k\varphi \frac{D_k\varphi}{|x|^2+\delta}d\mu +C_1\int_{\R^N}\frac{|D_k\varphi|^2}{|x|^2+\delta}d\mu \\
&=& -\int_{\R^N}D_kL\varphi \frac{D_k\varphi}{|x|^2+\delta}d\mu +\int_{\R^N}  \nabla\varphi\cdot\nabla\left(\frac{D_k\mu}{\mu}\right) \frac{D_k\varphi}{|x|^2+\delta}d\mu +C_1\int_{\R^N}\frac{|D_k\varphi|^2}{|x|^2+\delta}d\mu \\
&\le & -\int_{\R^N}D_kL\varphi \frac{D_k\varphi}{|x|^2+\delta}d\mu +\varepsilon\int_{\R^N}\frac{|\nabla \varphi|^2}{|x|^2(|x|^2+\delta)}d\mu+C_\varepsilon \int_{\R^N}\frac{|\nabla \varphi|^2}{|x|^2+\delta}\left|\frac{\nabla \mu}{\mu}\right|d\mu \\
&  &+C_1\theta \int_{\R^N}\frac{|\nabla \varphi|^2}{(|x|^2+\delta)^2}d\mu+\frac{C_1}{4\theta}\int_{\R^N}|\nabla \varphi|^2d\mu \\
&\le & -\int_{\R^N}D_kL\varphi \frac{D_k\varphi}{|x|^2+\delta}d\mu +\varepsilon\int_{\R^N}\frac{|\nabla \varphi|^2}{|x|^2(|x|^2+\delta)}d\mu +
C_\varepsilon \eta \int_{\R^N}\frac{|\nabla \varphi|^2}{(|x|^2+\delta)^2}d\mu \\
&  & +\frac{C_\varepsilon}{4\eta}\int_{\R^N}|\nabla \varphi|^2\left|\frac{\nabla \mu}{\mu}\right|^2d\mu +C_1\theta \int_{\R^N}\frac{|\nabla \varphi|^2}{(|x|^2+\delta)^2}d\mu+\frac{C_1}{4\theta}\int_{\R^N}|\nabla \varphi|^2d\mu.
\end{eqnarray*}
for any $\varepsilon ,\,\theta,\,\eta >0$ and $\varphi\in C_c^\infty(\R^N)$. Choosing $\varepsilon ,\,\theta$ and $\eta$ small enough we obtain
$$\int_{\R^N}\frac{|\nabla \varphi|^2}{|x|^2(|x|^2+\delta)}d\mu \le -C\int_{\R^N}\frac{  \nabla \varphi \cdot\nabla L\varphi }{|x|^2+\delta}d\mu+M\left(\int_{\R^N}|\nabla \varphi|^2\left|\frac{\nabla \mu}{\mu}\right|^2d\mu+\int_{\R^N}{|\nabla \varphi|^2}d\mu\right)$$
and so, by Proposition \ref{pr:prop-stima-drift}, we obtain \eqref{stima_Rellich_der_prima}. \\
We now prove that 
\ref{pr:prop-stima-drift} remains true for all $u\in D(L)$ such that $Lu\in H^1_\mu(\R^N)$. For such function $u$, there is $\varphi_n\in C_c^\infty(\R^N)$ such that $\lim_{n\to \infty}\varphi_n =u$ and $\lim_{n\to \infty}L\varphi_n=Lu$ in $L^2_\mu(\R^N)$. Since the graph norm of $L$ and the $H^2_\mu$-norm are equivalent, we deduce that $\lim_{n\to \infty}\|\varphi_n-u\|_{H^2_\mu}=0$. So, it remains to show that
$$\lim_{n\to \infty}\int_{\R^N}\frac{  \nabla \varphi_n \cdot\nabla 
L\varphi_n }{|x|^2+\delta}d\mu= \int_{\R^N}\frac{  \nabla u \cdot\nabla Lu }{|x|^2+\delta}d\mu.$$ 
To this purpose, integrating by parts, recalling the definition of $L$ and taking into account that $Lu\in H^1_\mu(\R^N)$, we have
\begin{eqnarray*}
\lim_{n\to \infty}\int_{\R^N}\frac{  \nabla \varphi_n \cdot\nabla 
L\varphi_n }{|x|^2+\delta}d\mu &=&\lim_{n\to \infty}\left(-\int_{\R^N}\frac{|L\varphi_n|^2}{|x|^2+\delta}d\mu+2\int_{\R^N} L\varphi_n\frac{  \nabla\varphi_n \cdot x }{(|x|^2+\delta)^2}d\mu\right)\\
&=& -\int_{\R^N}\frac{|Lu|^2}{|x|^2+\delta}d\mu+2\int_{\R^N} Lu\frac{  \nabla u \cdot x }{(|x|^2+\delta)^2}d\mu \\
&=& \int_{\R^N}\frac{  \nabla u\cdot \nabla 
Lu }{|x|^2+\delta}d\mu ,
\end{eqnarray*}
since the functions $x\mapsto (|x|^2+\delta)^{-1}$ and $x\mapsto x (|x|^2+\delta)^{-2}$ are bounded, where the last equality follows from the definition of $L$ and the fact that $Lu\in H^1_\mu(\R^N)$. Thus, Young's inequality gives, for any $\varepsilon >0$ there is $C_\varepsilon>0$ such that
\begin{eqnarray*}
\int_{\R^N}\frac{|\nabla u|^2}{|x|^2(|x|^2+\delta)}d\mu &\le & -C\int_{\R^N}\frac{  \nabla u\cdot\nabla  Lu }{|x|^2+\delta}d\mu +M\|u\|^2_{H^2_\mu}\\
&\le &\varepsilon \int_{\R^N}\frac{|\nabla u|^2}{|x|^2(|x|^2+\delta)}d\mu +C_\varepsilon \int_{\R^N}|\nabla Lu|^2d\mu +M\|u\|^2_{H^2_\mu}
\end{eqnarray*}
for all $u\in D(L)$ with $Lu\in H^1_\mu(\R^N)$. Choosing $\varepsilon=\frac12$ we get
\begin{align*}
\int_{\R^N}\frac{|\nabla u|^2}{|x|^2(|x|^2+\delta)}d\mu
\le &\varepsilon \int_{\R^N}\frac{|\nabla u|^2}{|x|^2(|x|^2+\delta)}d\mu +C_\varepsilon \int_{\R^N}|\nabla Lu|^2d\mu +M\|u\|^2_{H^2_\mu},
\end{align*}
for all $u\in D(L)$ with $Lu\in H^1_\mu(\R^N)$ and some constants $C,\,M>0$ independent of $u$. Letting $\delta \to 0$ in the above estimate, Fatou's lemma gives \eqref{hardy-1derivative}. This ends the proof.
\qed

The following results shows that $D(L^2)\subset H^3_\mu(\R^N)$.
\begin{corollary}\label{cor-regularity-H3}
Assume that $N\ge 7$ and $\mu$ satisfies $(H1),(H2)$ and $(H3)(i)$. Then
$$H^3_\mu(\R^N) =\{u\in H_\mu^2(\R^N): Lu\in H^1_\mu(\R^N)\}.$$ 
In particular, $D(L^2)\subset H^3_\mu(\R^N)$.
\end{corollary}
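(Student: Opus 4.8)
The plan is to prove the two inclusions making up the asserted equality and then read off $D(L^{2})\subset H^{3}_\mu(\R^{N})$ as a formality.

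First I would treat the easy inclusion $H^{3}_\mu(\R^{N})\subset\{u\in H^{2}_\mu(\R^{N}):Lu\in H^{1}_\mu(\R^{N})\}$. Given $u\in H^{3}_\mu(\R^{N})$, one has $u\in H^{2}_\mu(\R^{N})=D(L)$ by Theorem \ref{th:characterization-1}, and $Lu=\Delta u+\sum_{j}\frac{D_j\mu}{\mu}D_j u\in L^{2}_\mu(\R^{N})$ by Proposition \ref{pr:prop-stima-drift} applied with $\varphi=D_j u\in H^{1}_\mu(\R^{N})$. To check $\nabla(Lu)\in L^{2}_\mu(\R^{N})$ I would differentiate,
\[
D_k(Lu)=D_k\Delta u+\sum_{j}\left(D_k\!\left(\frac{D_j\mu}{\mu}\right)D_j u+\frac{D_j\mu}{\mu}D_{kj}u\right),
\]
and observe: $D_k\Delta u\in L^{2}_\mu(\R^{N})$ since $u\in H^{3}_\mu(\R^{N})$; each $\frac{D_j\mu}{\mu}D_{kj}u$ lies in $L^{2}_\mu(\R^{N})$ by Proposition \ref{pr:prop-stima-drift} applied with $\varphi=D_{kj}u\in H^{1}_\mu(\R^{N})$; and, by $(H3)(i)$, $\left|D_k\!\left(\frac{D_j\mu}{\mu}\right)D_j u\right|\le\frac{\varepsilon}{|x|^{2}}|\nabla u|+C_\varepsilon\left|\frac{\nabla\mu}{\mu}\right||\nabla u|$, whose $L^{2}_\mu$-norm is finite by \eqref{hardy-stima2} (used with $D_j u\in H^{2}_\mu(\R^{N})$) together with Proposition \ref{pr:prop-stima-drift}. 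Hence $Lu\in H^{1}_\mu(\R^{N})$.

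The substantial part is the reverse inclusion $\{u\in H^{2}_\mu(\R^{N}):Lu\in H^{1}_\mu(\R^{N})\}\subset H^{3}_\mu(\R^{N})$. Fix such a $u$; then $u\in D(L)$ by Theorem \ref{th:characterization-1}. The plan is to exploit the commutator identity extracted from \eqref{derivata_1},
\[
L(D_k u)=D_k(Lu)-\sum_{j=1}^{N}D_k\!\left(\frac{D_j\mu}{\mu}\right)D_j u,\qquad k=1,\dots,N,
\]
read in the sense of distributions. Its right-hand side lies in $L^{2}_\mu(\R^{N})$: indeed $D_k(Lu)\in L^{2}_\mu(\R^{N})$ because $Lu\in H^{1}_\mu(\R^{N})$, while $(H3)(i)$ dominates the correction term by $\frac{\varepsilon}{|x|^{2}}|\nabla u|+C_\varepsilon\left|\frac{\nabla\mu}{\mu}\right||\nabla u|$, where $\int_{\R^{N}}\frac{|\nabla u|^{2}}{|x|^{4}}\,d\mu<\infty$ by the weighted Rellich inequality \eqref{hardy-1derivative} — applicable exactly because $u\in D(L)$ and $Lu\in H^{1}_\mu(\R^{N})$ — and $\int_{\R^{N}}\left|\frac{\nabla\mu}{\mu}\right|^{2}|\nabla u|^{2}\,d\mu=\sum_{j}\left\|\frac{\nabla\mu}{\mu}D_j u\right\|_{L^{2}_\mu}^{2}<\infty$ by Proposition \ref{pr:prop-stima-drift}. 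Thus $L(D_k u)\in L^{2}_\mu(\R^{N})$, and since moreover $D_k u\in H^{1}_\mu(\R^{N})$ and $L$ is the operator associated with the form $a$ of \eqref{form-L}, the same integration by parts as in the second part of the proof of Theorem \ref{th:characterization-1}, now performed at the level of distributions (the two first-order drift contributions cancel), gives $a(D_k u,\varphi)=-\int_{\R^{N}}L(D_k u)\,\overline\varphi\,d\mu$ for all $\varphi\in C_c^\infty(\R^{N})$, hence, by density of $C_c^\infty(\R^{N})$ in $H^{1}_\mu(\R^{N})$, for all $\varphi\in H^{1}_\mu(\R^{N})$; therefore $D_k u\in D(L)=H^{2}_\mu(\R^{N})$. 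As $k$ is arbitrary, every third-order weak derivative of $u$ belongs to $L^{2}_\mu(\R^{N})$, that is, $u\in H^{3}_\mu(\R^{N})$.

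Finally, if $u\in D(L^{2})$ then $u\in D(L)=H^{2}_\mu(\R^{N})$ and $Lu\in D(L)=H^{2}_\mu(\R^{N})\subset H^{1}_\mu(\R^{N})$, so $u$ lies in the set just characterized and hence $u\in H^{3}_\mu(\R^{N})$. The step I expect to be the real obstacle is showing that the commutator correction $\sum_j D_k\!\left(\frac{D_j\mu}{\mu}\right)D_j u$ belongs to $L^{2}_\mu(\R^{N})$: this is precisely where Hypothesis $(H3)(i)$ and the first-order weighted Rellich bound \eqref{hardy-1derivative} are indispensable. A secondary, more routine point is the soft functional-analytic passage from "$D_k u\in H^{1}_\mu(\R^{N})$ with $L(D_k u)\in L^{2}_\mu(\R^{N})$ distributionally" to "$D_k u\in D(L)$", which rests on $L$ being the operator of the form $a$ and on the density of $C_c^\infty(\R^{N})$ in $H^{1}_\mu(\R^{N})$.
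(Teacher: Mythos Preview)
Your proposal is correct and follows essentially the same route as the paper's proof: for the inclusion $H^{3}_\mu\subset\{u\in H^{2}_\mu:Lu\in H^{1}_\mu\}$ you differentiate $Lu$ and control the pieces via $(H3)(i)$, Proposition~\ref{pr:prop-stima-drift} and \eqref{hardy-stima2}; for the reverse inclusion you use the commutator identity $LD_ku=D_kLu-\nabla\!\left(\tfrac{D_k\mu}{\mu}\right)\cdot\nabla u$, bound the correction term in $L^{2}_\mu$ through $(H3)(i)$, Proposition~\ref{pr:prop-stima-drift} and \eqref{hardy-1derivative}, and conclude $D_ku\in D(L)$ via the form $a$---exactly as in the paper.
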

{\sc Proof.}
Let $u\in H_\mu^3(\R^N)$. Then, by $(H3)(i)$, Proposition \ref{pr:prop-stima-drift} and \eqref{hardy-stima2}, we deduce that $\frac{\nabla\mu}{\mu}\cdot \nabla u\in H^1_\mu(\R^N)$ and 
$$\left\|\frac{\nabla\mu}{\mu}\cdot \nabla u\right\|\le C\|u\|_{H^3_\mu}.$$
So, $Lu\in H^1_\mu(\R^N)$. \\
Conversely, let us consider $u\in H^2_\mu(\R^N)$ with $Lu\in H^1_\mu(\R^N)$. Since, by Theorem \ref{th:characterization-1}, $D(L)=H^2_\mu(\R^N)$, we have to prove that $D_ku\in D(L)$ for any $k=1,\ldots ,N$. Integrating by parts, we obtain
$$\int_{\R^N}\nabla(D_ku)\cdot \nabla\varphi \,d\mu=-\int_{\R^N}(D_kLu) \varphi \,d\mu+\int_{\R^N}\nabla u\cdot \nabla\left(\frac{D_k\mu}{\mu}\right)\varphi \,d\mu$$
for all $\varphi\in C_c^\infty(\R^N)$. So, since $C_c^\infty(\R^N)$ is is a core for $L$, to show that $D_ku\in D(L)$ it suffices to prove that  $\nabla u\cdot \nabla\left(\frac{D_k\mu}{\mu}\right)\in L^2_\mu(\R^N)$, which can be obtained by using $(H3)(i)$, Proposition \ref{pr:prop-stima-drift} and \eqref{hardy-1derivative}. Moreover,
\begin{equation}\label{eq-comut}
LD_ku=D_kLu-\nabla\left(\frac{D_k\mu}{\mu}\right)\cdot \nabla u.
\end{equation}
\qed

For the complete description of $D(A)$ we need the following versions of   higher order Rellich's inequalities.
\begin{lemma}\label{lem:H4}
Assume that $N\ge 7$ and $\mu$ satisfies $(H1)-(H3)$. Then
\begin{equation}\label{hardy-2derivative}
\int_{\R^N}\frac{|\nabla u|^2}{|x|^6}d\mu
\leq C_1\left(\int_{\R^N}|Au|^2d\mu +\|u\|^2_{H^3_\mu}\right)
\end{equation}
and 
\begin{equation}\label{hardy-3derivative}
\int_{\R^N}\frac{|D^2 u|^2}{|x|^4}d\mu
\leq C_2\left(\int_{\R^N}|Au|^2d\mu +\|u\|_{H^3_\mu}\right)
\end{equation}
for any $u\in D(L^2)$ and some constants $C_1,C_2>0$.
\end{lemma}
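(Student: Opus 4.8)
The plan is to prove both inequalities first for $u\in C_c^\infty(\R^N)$ and then to pass to a general $u\in D(L^2)$ by density. Working with $C_c^\infty$ is essential: because $N\ge 7$ and $\mu$ is locally bounded above and below (Remark \ref{rem:2-1}), the two target quantities
\[
X:=\int_{\R^N}\frac{|\nabla u|^2}{|x|^6}\,d\mu,\qquad Y:=\int_{\R^N}\frac{|D^2u|^2}{|x|^4}\,d\mu
\]
are then finite (since $\int_{B_1}|x|^{-6}\,dx<\infty$ and $\int_{B_1}|x|^{-4}\,dx<\infty$ for $N\ge7$), which is exactly what makes the absorption argument below legitimate. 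For the final extension one uses $D(L)=H^2_\mu(\R^N)$ (Theorem \ref{th:characterization-1}), $D(L^2)\subset H^3_\mu(\R^N)$ (Corollary \ref{cor-regularity-H3}), the equivalence \eqref{eq:L-H-mu} of the graph norm of $L$ with the $H^2_\mu$-norm — which in particular yields $\|Lu\|_{H^2_\mu}\le C(\|Au\|_{L^2_\mu}+\|u\|_{H^3_\mu})$ since $Lu\in D(L)$ — together with a regularization argument in the spirit of the proof of Corollary \ref{cor-regularity-H3}.

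Set $Z:=\int_{\R^N}|Au|^2\,d\mu+\|u\|^2_{H^3_\mu}$. To bound $X$ I would apply the Rellich-type inequality \eqref{hardy-stima3} to each $D_ku\in H^3_\mu(\R^N)$ and sum over $k$, getting $X\le C\bigl(\sum_{j,k}\int_{\R^N}|LD_{jk}u|^2\,d\mu+\|u\|^2_{H^3_\mu}\bigr)$; to bound $Y$ I would apply the weighted Rellich inequality \eqref{weighted_Rellich} to each $D_{jk}u\in C_c^\infty(\R^N)$ and sum, getting $Y\le C\bigl(\sum_{j,k}\int_{\R^N}|LD_{jk}u|^2\,d\mu+\|u\|^2_{H^3_\mu}\bigr)$. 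So everything reduces to estimating $\sum_{j,k}\int_{\R^N}|LD_{jk}u|^2\,d\mu$, and here I would iterate the commutation formula \eqref{eq-comut}:
\[
LD_{jk}u=D_{jk}Lu-D_j\!\left(\nabla\!\left(\frac{D_k\mu}{\mu}\right)\cdot\nabla u\right)-\nabla\!\left(\frac{D_j\mu}{\mu}\right)\cdot\nabla D_ku .
\]
The contribution of the first term is harmless: $\sum_{j,k}\int_{\R^N}|D_{jk}Lu|^2\,d\mu=\int_{\R^N}|D^2Lu|^2\,d\mu\le\|Lu\|^2_{H^2_\mu}\le CZ$.

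The remaining two terms are treated by expanding the derivatives of $D_k\mu/\mu$ and invoking Hypothesis $(H3)$. By $(H3)(ii)$ one has $|D_{ij}(D_k\mu/\mu)|\le\varepsilon|x|^{-3}+C_\varepsilon|\nabla\mu/\mu|$, so the pieces carrying $D_{ij}(D_k\mu/\mu)\,D_iu$ contribute at most $C\varepsilon^2\int_{\R^N}\frac{|\nabla u|^2}{|x|^6}\,d\mu+C_\varepsilon\int_{\R^N}\bigl|\tfrac{\nabla\mu}{\mu}\bigr|^2|\nabla u|^2\,d\mu\le C\varepsilon^2X+C_\varepsilon\|u\|^2_{H^2_\mu}$, where the last $\mu$-integral is controlled by Proposition \ref{pr:prop-stima-drift} applied componentwise to $D_iu\in H^1_\mu$; by $(H3)(i)$ one has $|D_i(D_k\mu/\mu)|\le\varepsilon|x|^{-2}+C_\varepsilon|\nabla\mu/\mu|$, so the pieces carrying $D_i(D_k\mu/\mu)\,D_{ij}u$ (and similarly $D_i(D_j\mu/\mu)\,D_{ik}u$) contribute at most $C\varepsilon^2\int_{\R^N}\frac{|D^2u|^2}{|x|^4}\,d\mu+C_\varepsilon\int_{\R^N}\bigl|\tfrac{\nabla\mu}{\mu}\bigr|^2|D^2u|^2\,d\mu\le C\varepsilon^2Y+C_\varepsilon\|u\|^2_{H^3_\mu}$, again using Proposition \ref{pr:prop-stima-drift} on $D_{ij}u\in H^1_\mu$. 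Collecting all this I obtain the coupled pair
\[
X\le C\varepsilon^2(X+Y)+C_\varepsilon Z,\qquad Y\le C\varepsilon^2(X+Y)+C_\varepsilon Z ,
\]
valid for every $\varepsilon>0$ with $X,Y<\infty$. Adding the two and choosing $\varepsilon$ so small that $2C\varepsilon^2<\tfrac12$, I absorb $X+Y$ into the left-hand side and conclude $X+Y\le C'Z$, which gives \eqref{hardy-2derivative} and \eqref{hardy-3derivative}.

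The main obstacle is precisely this coupling. Because of Hypothesis $(H3)$, each of the two weighted integrals reappears on the right-hand side of the estimate for the other one (and, in part, for itself): the second-order derivatives of $\nabla\mu/\mu$ hitting $\nabla u$ regenerate $X$, while its first-order derivatives hitting $D^2u$ regenerate $Y$. Hence neither inequality can be closed on its own; one has to set up and solve the $2\times 2$ system, and this is also why the a priori finiteness of $X$ and $Y$ — hence the reduction to $C_c^\infty$ and the hypothesis $N\ge 7$ — cannot be dispensed with. A secondary, purely technical, point is the bookkeeping needed to guarantee that every lower-order remainder is either a small multiple of $X$ or of $Y$, or is bounded by $\|u\|^2_{H^3_\mu}$ via Proposition \ref{pr:prop-stima-drift} and \eqref{hardy-stima2}.
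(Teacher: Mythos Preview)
Your reduction of both $X=\int|\nabla u|^2/|x|^6\,d\mu$ and $Y=\int|D^2u|^2/|x|^4\,d\mu$ to $\sum_{j,k}\int|LD_{jk}u|^2\,d\mu$ followed by the commutation \eqref{eq-comut} and a $2\times 2$ absorption argument is correct for $u\in C_c^\infty(\R^N)$, but it is not the route taken in the paper, and your claim that ``neither inequality can be closed on its own'' is an artifact of your chosen starting point. The paper proves \eqref{hardy-2derivative} \emph{independently}, without ever touching $LD_{jk}u$: it applies the weighted Hardy inequality to $D_ku/(|x|^2+\delta)$ as in the proof of \eqref{hardy-1derivative}, obtains a regularized estimate whose principal term is $-\int\frac{\nabla u\cdot\nabla Lu}{(|x|^2+\delta)^2}\,d\mu$, and controls this by Young and then weighted Hardy on $\nabla Lu$, landing on $\|D^2Lu\|_{L^2_\mu}^2\le\|Lu\|_{H^2_\mu}^2\le C(\|Au\|^2+\|Lu\|^2)$. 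Only afterwards is \eqref{hardy-3derivative} deduced, using the already-established \eqref{hardy-2derivative} together with \eqref{hardy-1derivative}, \eqref{eq-comut} and $(H3)$. Working with $\nabla Lu$ rather than $L\nabla\nabla u$ is precisely what prevents the $|D^2u|^2/|x|^4$ term from reappearing and decouples the two estimates.

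The more serious issue is your extension from $C_c^\infty(\R^N)$ to $D(L^2)$. Your right-hand side $Z=\|Au\|_{L^2_\mu}^2+\|u\|_{H^3_\mu}^2$ is not continuous for the $H^2_\mu$-norm, so the only density at your disposal (that of $C_c^\infty$ in $D(L)=H^2_\mu$) does not let you pass to the limit; you would need $C_c^\infty$ to be a core for $L^2$, which is nowhere established. The paper avoids this by carrying the parameter $\delta>0$ throughout: each term in the regularized inequality \eqref{eq:quarta-stima} is shown to be continuous in $H^2_\mu$ (the cross term via an integration by parts that rewrites it in terms of $L\varphi_n$ and $\nabla\varphi_n$ only), so the approximation $\varphi_n\to u$ in $H^2_\mu$ suffices; the quantities $\|Au\|$ and $\|u\|_{H^3_\mu}$ enter only \emph{after} the limit, when bounding the surviving terms for the fixed $u\in D(L^2)$, and Fatou's lemma as $\delta\to0$ closes the argument. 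Your sketch (``a regularization argument in the spirit of Corollary~\ref{cor-regularity-H3}'') does not address this point.
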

\begin{proof}
Similar computations as in the proof of Proposition \ref{prop:more-rellich} yield
\begin{equation}\label{eq:quarta-stima}
\int_{\R^N}\frac{|\nabla \varphi|^2}{|x|^2(|x|^2+\delta)^2}d\mu
\leq -C\int_{\R^N}\frac{  \nabla \varphi\cdot\nabla L\varphi }{(|x|^2+\delta)^2}d\mu+M\left(\int_{\R^N}\frac{|\nabla \varphi|^2}{|x|^2+\delta}\left|\frac{\nabla \mu}{\mu}\right|^2d\mu+\int_{\R^N}\frac{|\nabla \varphi|^2}{(|x|^2+\delta)^2}d\mu\right)
\end{equation}
for all $\varphi\in C_c^\infty(\R^N)$ and some constants $C,\,M>0$ independent of $\varphi$. To show that \eqref{eq:quarta-stima} remains valid for $u\in D(L^2)$, as in the proof of Proposition \ref{prop:more-rellich}, we consider $\varphi_n\in C_c^\infty(\R^N)$ such that $\lim_{n\to \infty}\|\varphi_n-u\|_{H^2_\mu}=0$. Using \eqref{weigthed_Hardy}, it is easy to see
that 
\begin{align*}
\lim_{n\to\infty}\int_{\R^N}\frac{|\nabla \varphi_n|^2}{|x|^2(|x|^2+\delta)^2}d\mu & = \int_{\R^N}\frac{|\nabla u|^2}{|x|^2(|x|^2+\delta)^2}d\mu \,\,\hbox{\ and } \\
\lim_{n\to\infty}\int_{\R^N}\frac{|\nabla \varphi_n|^2}{(|x|^2+\delta)^2}d\mu  & = \int_{\R^N}\frac{|\nabla u|^2}{(|x|^2+\delta)^2}d\mu .
\end{align*} 
To compute $\lim_{n\to\infty}\int_{\R^N}\frac{|\nabla \varphi_n|^2}{|x|^2+\delta}\left|\frac{\nabla \mu}{\mu}\right|^2d\mu$
we remark that, by Proposition \ref{pr:prop-stima-drift}, we have
\begin{eqnarray*}
\int_{\R^N}\frac{|D_k v|^2}{|x|^2+\delta}\left|\frac{\nabla \mu}{\mu}\right|^2d\mu
&\leq & C\left(\int_{\R^N}\left|\nabla\left(\frac{D_kv}{(|x|^2+\delta)^{1/2}}\right)\right|^2d\mu+\int_{\R^N}\frac{|D_kv|^2}{|x|^2+\delta}d\mu\right)\\
&\le & C\left(\int_{\R^N}\frac{|D^2v|^2}{|x|^2+\delta}d\mu+\int_{\R^N}\frac{|\nabla v|^2}{(|x|^2+\delta)^2}d\mu+\int_{\R^N}\frac{|\nabla v|^2}{|x|^2+\delta}d\mu \right)
\end{eqnarray*}
for any $v\in H^2_\mu(\R^N)$ and $k=1,\ldots ,N$. Hence,
\begin{align*}
\lim_{n\to\infty}\int_{\R^N}\frac{|\nabla (\varphi_n-u)|^2}{|x|^2+\delta}\left|\frac{\nabla \mu}{\mu}\right|^2d\mu
\leq &\ C\lim_{n\to\infty}\bigg(\int_{\R^N}\frac{|D^2(\varphi_n-u)|^2}{(|x|^2+\delta)^2}d\mu \\
& +\int_{\R^N}\frac{|\nabla (\varphi_n-u)|^2}{(|x|^2+\delta)^2}d\mu
+\int_{\R^N}\frac{|\nabla(\varphi_n-u)|^2}{|x|^2+\delta}d\mu \bigg)=0.
\end{align*}
The last limit 
$$\lim_{n\to\infty}\int_{\R^N}\frac{  \nabla \varphi_n\cdot\nabla L\varphi_n }{(|x|^2+\delta)^2}d\mu=\int_{\R^N}\frac{  \nabla u\cdot\nabla Lu }{(|x|^2+\delta)^2}d\mu$$
follows as in the proof of \eqref{hardy-1derivative}. Thus, \eqref{eq:quarta-stima} holds true for all $u\in D(L^2)$.

Now, using $(H3)(i)$, \eqref{weigthed_Hardy}, Proposition \ref{pr:prop-stima-drift} and \eqref{hardy-stima2}, we can see that
$$\int_{\R^N}\frac{|\nabla u|^2}{|x|^2+\delta}\left|\frac{\nabla \mu}{\mu}\right|^2d\mu+\int_{\R^N}\frac{|\nabla u|^2}{(|x|^2+\delta)^2}d\mu \le M\|u\|_{H^3_\mu}$$ 
holds for all $u\in D(L^2)$ and some constant $M>0$ independent of $u$, since, by Corollary \ref{cor-regularity-H3}, we know that $u\in H^3_\mu(\R^N)$ whenever $u\in D(L^2)$. Thus, by Young's inequality,
\begin{eqnarray*}
\int_{\R^N}\frac{|\nabla u|^2}{|x|^2(|x|^2+\delta)^2}d\mu
&\leq & -C\int_{\R^N}\frac{  \nabla u\cdot\nabla Lu }{(|x|^2+\delta)^2}d\mu+M\|u\|_{H^3_\mu}\\
&\le & \varepsilon \int_{\R^N}\frac{|\nabla u|^2}{(|x|^2+\delta)^3}d\mu +C_\varepsilon \int_{\R^N}\frac{|\nabla Lu|^2}{|x|^2+\delta} d\mu +M\|u\|_{H^3_\mu}.
\end{eqnarray*}
Thus, by choosing $\varepsilon$ small enough and \eqref{weigthed_Hardy}, we have
\begin{eqnarray*}
\int_{\R^N}\frac{|\nabla u|^2}{|x|^2(|x|^2+\delta)^2}d\mu &\le & C \int_{\R^N}\frac{|\nabla Lu|^2}{|x|^2+\delta} d\mu +M\|u\|_{H^3_\mu}\\
 &\le & C \int_{\R^N}\frac{|\nabla Lu|^2}{|x|^2} d\mu +M\|u\|_{H^3_\mu}\\
 &\le & C_1 \left(\int_{\R^N}|D^2Lu|^2d\mu+\int_{\R^N}|\nabla Lu|^2d\mu\right)+M\|u\|_{H^3_\mu}\\
 &\le &  C_2 \left(\int_{\R^N}|Au|^2d\mu +\|u\|_{H^3_\mu}\right).
 \end{eqnarray*}
 Thus, \eqref{hardy-2derivative} follows by applying Fatou's lemma.
 
 Finally, Estimate \eqref{hardy-3derivative} follows by applying \eqref{hardy-2derivative}, \eqref{hardy-1derivative}, \eqref{eq-comut} and taking into account assumption $(H3)$.

\end{proof}

We can now give a characterization of $D(A)$.

\begin{theorem}\label{th:characterization-2}
Assume that $N\geq7$ and 
$\mu$  satisfies Hypotheses  $(H1)$, $(H2)$ and $(H3)$
then 
\[
D(A)=H^{4}_{\mu}(\R^{N}).
\]
\end{theorem}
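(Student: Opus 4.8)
The plan is to prove the two inclusions $D(A)\subset H^4_\mu(\R^N)$ and $H^4_\mu(\R^N)\subset D(A)$ separately, using the higher order Rellich inequalities of Lemma \ref{lem:H4} together with the already established characterization $D(L)=H^2_\mu(\R^N)$ (Theorem \ref{th:characterization-1}) and the fact that $D(L^2)\subset H^3_\mu(\R^N)$ (Corollary \ref{cor-regularity-H3}). Recall that $A=-L^2$ on $D(A)=D(L^2)$, so a function $u$ lies in $D(A)$ iff $u\in D(L)$ and $Lu\in D(L)$; by Theorem \ref{th:characterization-1} this is equivalent to $u\in H^2_\mu(\R^N)$ and $Lu\in H^2_\mu(\R^N)$.

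First I would prove $D(A)\subset H^4_\mu(\R^N)$. Fix $u\in D(A)=D(L^2)$; by Corollary \ref{cor-regularity-H3} we already know $u\in H^3_\mu(\R^N)$. Using the explicit formula \eqref{explicit_formula_A} for $A$ one reads off $\Delta^2 u = -Au + (\text{lower order terms involving } D^3u, D^2u, \nabla u \text{ with coefficients built from } \nabla\mu/\mu, D^2\mu/\mu, D^3\mu/\mu)$. The strategy is to show that every one of those lower order terms lies in $L^2_\mu(\R^N)$, so that $\Delta^2 u\in L^2_\mu(\R^N)$, and then to invoke a weighted Calderón–Zygmund estimate of the form $\|D^4 u\|_{L^2_\mu}\le C(\|\Delta^2 u\|_{L^2_\mu}+\|u\|_{L^2_\mu})$ — proved by the same integration-by-parts argument as in Proposition \ref{weighted-Cal-Zyg}, iterated one order higher and absorbing the resulting terms with the interpolation inequality \eqref{eq:5-9} and Proposition \ref{pr:prop-stima-drift}. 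The terms $\frac{\nabla\mu}{\mu}\cdot\nabla(\Delta u)$ and $\frac{{\rm Tr}[D^2\mu D^2u]}{\mu}$ are controlled via Proposition \ref{pr:prop-stima-drift} (applied to $\varphi$ a first/second derivative of $u$) together with $(H3)$; the genuinely singular terms $\frac{\nabla(\Delta\mu)}{\mu}\cdot\nabla u$ and the cubic term $|\frac{\nabla\mu}{\mu}|^2\frac{\nabla\mu}{\mu}\cdot\nabla u$ are exactly where $(H3)(ii)$ enters, giving a bound by $\varepsilon|x|^{-3}|\nabla u|+C|\frac{\nabla\mu}{\mu}||\nabla u|$, which is square-$\mu$-integrable by the weighted Rellich-type bound \eqref{hardy-2derivative} of Lemma \ref{lem:H4} (here $N\ge7$ is used) combined with \eqref{hardy-3derivative}. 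Collecting these estimates yields $u\in H^4_\mu(\R^N)$ together with a two-sided inequality $C^{-1}\|u\|_{H^4_\mu}\le \|Au\|_{L^2_\mu}+\|u\|_{L^2_\mu}\le C\|u\|_{H^4_\mu}$ for $u$ in a core, hence for all $u\in D(A)$.

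For the converse inclusion $H^4_\mu(\R^N)\subset D(A)$, take $u\in H^4_\mu(\R^N)$. Then $u\in H^2_\mu(\R^N)=D(L)$, so it suffices to check that $Lu\in D(L)=H^2_\mu(\R^N)$, i.e. that $Lu=\Delta u+\frac{\nabla\mu}{\mu}\cdot\nabla u$ has all second derivatives in $L^2_\mu(\R^N)$. The term $\Delta u$ has second derivatives bounded by $|D^4u|\in L^2_\mu(\R^N)$. For the drift term $\frac{\nabla\mu}{\mu}\cdot\nabla u$ one computes $D_{ij}$ by the product rule, producing terms of the shape $\frac{\nabla\mu}{\mu}\cdot\nabla(D_{ij}u)$, $D_i(\frac{D_k\mu}{\mu})D_{jk}u$ and $D_{ij}(\frac{D_k\mu}{\mu})D_k u$; using $(H3)$ these are bounded, respectively, by $|\frac{\nabla\mu}{\mu}||D^3u|$ (in $L^2_\mu$ by Proposition \ref{pr:prop-stima-drift} applied to $\varphi=D^2u$, since $u\in H^3_\mu$), by $(\varepsilon|x|^{-2}+C|\frac{\nabla\mu}{\mu}|)|D^2u|$ (in $L^2_\mu$ by \eqref{hardy-stima2} applied to $D^2 u$ together with Proposition \ref{pr:prop-stima-drift}), and by $(\varepsilon|x|^{-3}+C|\frac{\nabla\mu}{\mu}|)|\nabla u|$ (in $L^2_\mu$ by \eqref{hardy-2derivative} and \eqref{hardy-3derivative}, again using $N\ge7$). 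Hence $Lu\in H^2_\mu(\R^N)=D(L)$, so $u\in D(L^2)=D(A)$, and then $Au=-L^2u=-L(Lu)$ admits the representation \eqref{explicit_formula_A} by density and the closedness of $A$. Combining the two inclusions gives $D(A)=H^4_\mu(\R^N)$.

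The main obstacle I expect is the bookkeeping in the first inclusion: one must establish the fourth-order weighted Calderón–Zygmund inequality controlling $\|D^4u\|_{L^2_\mu}$ by $\|\Delta^2u\|_{L^2_\mu}+\|u\|_{L^2_\mu}$, and simultaneously show $\Delta^2u\in L^2_\mu$ by absorbing all the mixed lower-order terms. The delicate point is that the naive estimates of the intermediate terms produce $\|D^4u\|_{L^2_\mu}$ and $\|D^3u\|_{L^2_\mu}$ with constants that are not a priori small, so one needs the right hierarchy of interpolation inequalities — the analogue of Proposition \ref{interp-ineq} one order up, namely $\|D^3u\|^2_{L^2_\mu}\le\varepsilon\|D^4u\|^2_{L^2_\mu}+C_\varepsilon\|u\|^2_{L^2_\mu}$ and similarly for lower orders — to push all those contributions to the left-hand side with arbitrarily small coefficient, and then invoke $(H3)(ii)$ and Lemma \ref{lem:H4} precisely for the honestly singular remainders. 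Once this absorption machinery is set up, both inclusions follow by essentially the same estimates read in the two directions.
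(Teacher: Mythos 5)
Your second inclusion, $H^4_\mu(\R^N)\subset D(A)$, essentially reproduces the paper's argument (differentiate $Lu$ by the product rule, use $(H3)$, Proposition \ref{pr:prop-stima-drift} and the weighted Rellich-type bounds), but with one circular citation: to handle the term bounded by $(\varepsilon|x|^{-3}+C|\nabla\mu/\mu|)|\nabla u|$ you invoke \eqref{hardy-2derivative}, whose hypothesis is $u\in D(L^2)=D(A)$ --- exactly what you are trying to establish in that direction. The paper avoids this by using \eqref{hardy-stima3} applied to the components of $\nabla u\in H^3_\mu(\R^N)$, which is legitimate for $u\in H^4_\mu(\R^N)$; your step should be repaired the same way. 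Also note that the identification $D(A)=\{u\in H^2_\mu(\R^N):\,Lu\in H^2_\mu(\R^N)\}$, which you simply ``recall'' (and with a sign slip: $A=L^2$, not $-L^2$), is in the paper part of this very proof, obtained from the selfadjointness of $L$ on $H^2_\mu(\R^N)$ and the group generated by $iL$.

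The genuine gap is in your first inclusion, $D(A)\subset H^4_\mu(\R^N)$. Your plan rests on two pieces of machinery that are nowhere established: a fourth-order weighted Calder\'on--Zygmund inequality $\|D^4u\|_{L^2_\mu}\le C(\|\Delta^2u\|_{L^2_\mu}+\|u\|_{L^2_\mu})$ and the interpolation inequality $\|D^3u\|^2_{L^2_\mu}\le\varepsilon\|D^4u\|^2_{L^2_\mu}+C_\varepsilon\|u\|^2_{L^2_\mu}$, which you propose to get by ``iterating'' Propositions \ref{weighted-Cal-Zyg} and \ref{interp-ineq}. This is not a routine iteration: the integrations by parts now generate third derivatives of $\mu$ and weights up to $|x|^{-6}$, i.e.\ exactly the singular terms whose control is the content of Lemma \ref{lem:H4} and hypothesis $(H3)(ii)$, so you would be redoing and extending the hardest estimates of the paper rather than quoting them. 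Moreover, even granting those inequalities for $u\in C_c^\infty(\R^N)$, your passage ``for $u$ in a core, hence for all $u\in D(A)$'' presupposes a core of $A$ consisting of smooth compactly supported functions; only ``$C_c^\infty(\R^N)$ is a core for $L$'' is known, and a core for $L$ is not automatically a core for $L^2=A$, so this transfer is unjustified (all the more so since for a general $u\in D(A)$ one only knows $u\in H^3_\mu(\R^N)$ a priori, so $\Delta^2u$ is merely a distribution). The paper sidesteps both problems by never estimating $D^4u$ directly: for $u\in D(A)$ it proves $D_{hk}u\in D(L)$ by testing the form \eqref{form-L} against $\varphi\in C_c^\infty(\R^N)$ and checking that the commutator terms $D_{hk}\!\left(\frac{\nabla\mu}{\mu}\right)\cdot\nabla u$ and $D_k\!\left(\frac{\nabla\mu}{\mu}\right)\cdot\nabla(D_hu)$ belong to $L^2_\mu(\R^N)$ via \eqref{hardy-2derivative}, \eqref{hardy-3derivative} (which were proved precisely for $u\in D(L^2)$) and Proposition \ref{pr:prop-stima-drift}; then $D(L)=H^2_\mu(\R^N)$ yields $u\in H^4_\mu(\R^N)$. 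You should either adopt this reduction to the second-order characterization or actually carry out the fourth-order a priori estimates and the core argument, which is substantially more work than your sketch acknowledges.
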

\begin{proof}
By Theorem \ref{th:characterization-1} we know that $L$ with domain $H_\mu^2(\R^N)$ is selfadjoint and hence generates an analytic semigroup of angle $\frac{\pi}{2}$. Moreover, since $L$ is dissipative it follows that
\begin{equation}\label{eq:analytic}
\|T(z)\|_{\mathcal{L}(L^2_\mu(\R^N))}\le 1,\quad \hbox{\ for all } z\in \mathbb{C} \hbox{\ with }\Re z>0,
\end{equation}
(cf. \cite[Example 3.7.5]{ABHN}). Using \eqref{eq:analytic} we deduce from \cite[Proposition 3.9.1 and Remark 3.9.3]{ABHN} that $iL$ generates a $C_0$-group on $L^2_\mu(\R^N)$. Thus it follows from \cite[Corollary 3.7.15]{ABHN} that $(iL)^2$ generates an analytic contraction $C_0$-semigroup of angle $\frac{\pi}{2}$. Since $D(L^2)\subset D(A)$ and both $-L^2$ and $-A$ are generators, it follows that $A=L^2$. Hence,
$$D(A)=\{u\in H_\mu^2(\R^N);\,Lu\in H_\mu^2(\R^N)\}.$$

Let us prove now that $D(A)=H_\mu^4(\R^N)$.

To this purpose let us observe first, by $(H3)$, we have
\begin{equation}\label{eq:D(A)-1}
\int_{\R^N} \left | D_{i} \left( \frac{\nabla \mu}{\mu} \right)\right|^{2}\varphi^{2}d\mu
\leq C\left(  \varepsilon\int_{\R^N} \frac{1}{|x|^{4}}\varphi^{2}d\mu
	+\int_{\R^N}  \left |\frac{\nabla \mu}{\mu}\right|^{2}\varphi^{2}d\mu\right).
\end{equation}
Hence, using the weighted Rellich inequality \eqref{weighted_Rellich} and Proposition \ref{pr:prop-stima-drift} we deduce that
$\left | D_{i} \left( \frac{\nabla \mu}{\mu} \right)\right|\varphi \in L^{2}_{\mu}(\R^{N})$ for $\varphi \in H_\mu^2(\R^N)$. By the same arguments we have 
$\left | D_{ij} \left( \frac{\nabla \mu}{\mu} \right)\right|\psi \in L^{2}_{\mu}(\R^{N})$ for all $\psi\in H_\mu^3(\R^N)$, since
\begin{equation}\label{eq:D(A)-2}
\int_{\R^N} \left | D_{ij} \left( \frac{\nabla \mu}{\mu} \right)\right|^{2}\psi^{2}d\mu
\leq C\left(  \varepsilon\int_{\R^N} \frac{1}{|x|^{6}}\psi^{2}d\mu
	+\int_{\R^N}  \left |\frac{\nabla \mu}{\mu}\right|^{2}\psi^{2}d\mu\right).
\end{equation}
Thus, observing that, for $u\in H_\mu^4(\R^N)$,
\begin{equation}\label{eq:dom-derive1}
D_{k}Lu=LD_{k}u+D_{k}\left( \frac{\nabla \mu}{\mu} \right)\cdot \nabla u
\end{equation}
and
\begin{equation}\label{eq:dom-derive2}
D_{hk}Lu=LD_{hk}u+D_{hk}\left( \frac{\nabla \mu}{\mu} \right)\cdot \nabla u 
	+D_{h}\left( \frac{\nabla \mu}{\mu} \right)\cdot \nabla (D_{k}u)
	+D_{k}\left( \frac{\nabla \mu}{\mu} \right)\cdot \nabla (D_{h}u),
\end{equation}
it follows from \eqref{eq:D(A)-1}, \eqref{eq:D(A)-2}, \eqref{hardy-stima2},   \eqref{hardy-stima3},  and Proposition \ref{pr:prop-stima-drift} that $D_{k}Lu$ and $D_{hk}Lu$ belong to $L^2_\mu(\R^N)$ for all $h,\,k\in \{1,2,\ldots ,N\}$. Thus $Lu\in H_\mu^2(\R^N)$ and hence $H_\mu^4(\R^N)\subset D(A)$. 

For the other inclusion, since by Theorem \ref{th:characterization-1}, $D(L)=H^2_\mu(\R^N)$, we have to prove that $D_{hk}u\in D(L)$ for any $h,k=\,\ldots ,N$ whenever $u\in D(A)$. To this purpose let us consider $u\in D(A)$. Integrating by parts, we get
\begin{eqnarray*}
\int_{\R^N}\nabla(D_{hk}u)\cdot \nabla \varphi \,d\mu &=& -\int_{\R^N}D_{hk}Lu \varphi\,d\mu +\int_{R^N} D_{hk}\left( \frac{\nabla \mu}{\mu} \right)\cdot \nabla u\varphi\,d\mu \\
& & +\int_{\R^N} D_{h}\left( \frac{\nabla \mu}{\mu} \right)\cdot \nabla (D_{k}u)
	\varphi \,d\mu +\int_{\R^N} D_{k}\left( \frac{\nabla \mu}{\mu} \right)\cdot\nabla (D_{h}u)\varphi \,d\mu
\end{eqnarray*}
for all $\varphi\in C_c^\infty(\R^N)$. So, as in the proof of Corollary \ref{cor-regularity-H3}, we have only to prove that $D_{hk}\left( \frac{\nabla \mu}{\mu} \right)\cdot\nabla u $ and $D_{k}\left( \frac{\nabla \mu}{\mu} \right)\cdot\nabla (D_{h}u)$ belong to $L_\mu^2(\R^N)$. \\
Using $(H3)(ii)$, \eqref{hardy-2derivative} and Proposition \ref{pr:prop-stima-drift}, one can see that $D_{hk}\left( \frac{\nabla \mu}{\mu} \right)\cdot \nabla u\in L_\mu^2(\R^N)$. By $(H3)(i)$, \eqref{hardy-3derivative}, Proposition \ref{pr:prop-stima-drift} and Corollary \ref{cor-regularity-H3}, one deduces that
$D_{k}\left( \frac{\nabla \mu}{\mu} \right)\cdot\nabla (D_{h}u)\in L^2_\mu(\R^N)$. Thus, $D_{hk}u\in D(L)$ for any $h,k=1,\ldots ,N$ and hence $u\in H^4_\mu(\R^N)$. This ends the proof.

\end{proof}

From the above proof one can deduce that the graph norm of $A$ is equivalent to the $H^4_\mu(\R^N)$-norm.
\begin{remark}
One has
$$C^{-1}\|u\|_{H^4_\mu }\le \|Au\|_{L^2_\mu}+\|u\|_{L^2_\mu}\le C\|u\|_{H^4_\mu }$$
for all $u\in D(A)$ and some constant $C>0$.
\end{remark}

\section{The bi-Ornstein-Uhlenbeck operator}\label{bi-OU}
In this section we consider the Gaussian measure $\mu(x)=\gamma e^{-\frac{|x|^2}{2}}$ for $x\in \R^N$, where $\gamma =(2\pi)^{-N/2}$. In this case the operator $L$ is the classical symmetric Ornstein-Uhlenbeck operator
\begin{eqnarray*}
Lf &=& \Delta f-x\cdot \nabla f\\
f\in D(L)&=& H^{2}_\mu(\R^N).
\end{eqnarray*}
It is easy to see that $\mu$ satisfies all the assumptions of the previous sections. 
So, all the previous results can be applied to the above Ornstein-Uhlenbeck operator. In particular we deduce from \eqref{weighted_Rellich} and Theorem \ref{Th: Optimal} the following Rellich inequality for the Ornstein-Uhlenbeck operator.
\begin{theorem}\label{weighted-Rellich-OU}
Assume that $N\ge 5$ and $\mu(x)=\gamma e^{-\frac{|x|^2}{2}}$. Then for any $u\in H_\mu^2(\R^N)$ we have
\begin{eqnarray*}
(C_0-1)^2\int_{\R^N}\frac{|u(x)|^2}{|x|^4}d\mu 
&\leq & \int_{\R^N}|Lu(x)|^2d\mu+\frac{2(C_0-1)C_1}{C_0}\int_{\R^N}|\nabla u(x)|^2d\mu  \\
& &\quad +\frac{2(C_0-1)(C_1)^2}{C_0}\int_{\R^N}|u(x)|^2d\mu 
\end{eqnarray*}
for some positive constant $C_1$ and  $(C_0-1)^2=\left(\frac{N(N-4)}{4}\right)^2$ is the best constant.
\end{theorem}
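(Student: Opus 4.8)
The plan is to obtain the inequality as a direct specialization of the general weighted Rellich inequality \eqref{weighted_Rellich} combined with the domain characterization $D(L)=H^2_\mu(\R^N)$, and to deduce optimality of the constant from Theorem \ref{Th: Optimal}. The only real task is to check that the Gaussian density satisfies all the structural hypotheses $(H1)$–$(H3)$ used in Sections \ref{Rel-ineq} and \ref{domain-bi-kolm}, together with the extra assumption of Theorem \ref{Th: Optimal}.

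First I would record the elementary identities for $\mu(x)=\gamma e^{-|x|^2/2}$: namely $\nabla\mu/\mu=-x$, $\Delta\mu/\mu=|x|^2-N$, and hence
\[
U=\frac14\left|\frac{\nabla\mu}{\mu}\right|^2-\frac12\frac{\Delta\mu}{\mu}=\frac{N}{2}-\frac{|x|^2}{4}.
\]
From these, $(H1)$ is immediate, since $\mu$ is smooth and strictly positive and $\nabla\mu/\mu=-x\in L^r_{\rm loc}(\R^N,\R^N)$ for every $r$. For $(H2)$: $\sqrt\mu\in H^1_{\rm loc}(\R^N)$ and $\Delta\mu\in L^1_{\rm loc}(\R^N)$ hold trivially; near the origin one has $|x|^2U(x)\le\frac N2|x|^2$, which is dominated by $\frac14|\log|x||^{-2}$ on a sufficiently small ball $B_{R_0}$ because $|x|^2|\log|x||^2\to0$ as $x\to0$, giving $(H2)(ii)$; and $U\le N/2$ on all of $\R^N$ gives $(H2)(iii)$. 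For $(H3)$: since $D_j\mu/\mu=-x_j$ we get $D_i(D_j\mu/\mu)=-\delta_{ij}$ and $D_{ij}(D_k\mu/\mu)=0$, so $(H3)(ii)$ holds with left-hand side $0$; for $(H3)(i)$ one must verify $1\le\varepsilon|x|^{-2}+C_\varepsilon|x|$ for all $x\ne0$, which holds once $C_\varepsilon$ is large, because the minimum of $t\mapsto\varepsilon t^{-2}+C_\varepsilon t$ over $t>0$ equals a positive absolute multiple of $\varepsilon^{1/3}C_\varepsilon^{2/3}$ and hence exceeds $1$ for $C_\varepsilon$ large enough. Finally, the extra hypothesis of Theorem \ref{Th: Optimal}, namely $|\nabla\mu/\mu|^2\le\varepsilon|x|^{-2}+C_\varepsilon$ on some $B_{R_1}$, holds because $|\nabla\mu/\mu|^2=|x|^2$ is bounded on $B_{R_1}$.

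With the hypotheses verified, the asserted inequality for $u\in C_c^\infty(\R^N)$ is precisely \eqref{weighted_Rellich}; Remark \ref{rem:ul-mom} extends it to all $u\in D(L)$, and Theorem \ref{th:characterization-1} gives $D(L)=H^2_\mu(\R^N)$, so the inequality holds for every $u\in H^2_\mu(\R^N)$. For optimality I would argue by contradiction: if the inequality held with some constant $c>(C_0-1)^2=\big(\tfrac{N(N-4)}{4}\big)^2$ in place of $(C_0-1)^2$, then, since $\int_{\R^N}|\nabla u|^2d\mu+\int_{\R^N}|u|^2d\mu\le\|u\|^2_{H^1_\mu}$, the right-hand side would be bounded by $\int_{\R^N}|Lu|^2d\mu+C\|u\|^2_{H^1_\mu}$ for a suitable $C>0$, yielding a weighted Rellich inequality with constant $c>(C_0-1)^2$ — exactly what Theorem \ref{Th: Optimal} (whose hypotheses we just checked) rules out. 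Hence $(C_0-1)^2$ is the best constant.

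There is no substantive obstacle here beyond bookkeeping; the only slightly delicate points are the logarithmic near-origin estimate in $(H2)(ii)$ and the correct $\varepsilon$–$C_\varepsilon$ dependence in $(H3)(i)$, both handled by the elementary asymptotics above. Everything else is a citation of results already established in the preceding sections.
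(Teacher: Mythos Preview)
Your proposal is correct and follows essentially the same route as the paper: verify that the Gaussian density satisfies $(H1)$--$(H3)$ and the extra near-origin hypothesis of Theorem \ref{Th: Optimal}, then invoke \eqref{weighted_Rellich} for the inequality and Theorem \ref{Th: Optimal} for optimality. The only minor difference is that the paper appeals to the classical fact $D(L)=H^2_\mu(\R^N)$ for the Ornstein--Uhlenbeck operator (valid for all $N$, cf.\ \cite{dap,Lunardi}) rather than Theorem \ref{th:characterization-1}, but since $N\ge5$ is assumed your route via Theorem \ref{th:characterization-1} is equally valid.
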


Since we know without any restriction on $N$ that $D(L)=H^2_\mu(\R^N)$, 
one can, by direct computations, characterize the domain of the bi-Ornstein-Uhlenbeck operator for any $N\ge 1$. Moreover the following result shows that the corresponding semigroup is given by an explicit kernel.
\begin{theorem}\label{thm:bi-OU}
For any $N\ge 1$,
\begin{eqnarray*}
Af &=& \Delta^2 f -2x\cdot\nabla(\Delta  f) +Tr (x\otimes x D^2f)-2  \Delta f+ x\cdot\nabla f\\
f\in D(A)&=& H_\mu^4(\R^N)
\end{eqnarray*}
Moreover,
$$e^{-tA}f(x)=\int_{\R^N}k(t,x,y)f(y)\,dy,\quad x\in \R^N,\,t>0,\,f\in L_\mu^2(\R^N),$$
where
\begin{eqnarray*}
k(t,x,y) &=&(4\pi t)^{-\frac{1}{2}}\int_0^\infty e^{-\frac{s^2}{4t}}(p(is,x,y)+p(-is,x,y))\,ds\\
&=& \sqrt{2}(8\pi)^{-\frac{N+1}{2}} \int_0^\infty e^{-\frac{s^2}{4}}(\sin (s\sqrt{t}))^{-N/2}e^{-\frac{|e^{-is\sqrt{t}}x-y|^2}{8}}\cos\left(\frac{N}{2}(s\sqrt{t}-\frac{\pi}{2})+\frac{|e^{-is\sqrt{t}}x-y|^2}{8\tan(s\sqrt{t})}\right)ds
\end{eqnarray*}
for $t>0$ and $x,y\in \R^N$.
\end{theorem}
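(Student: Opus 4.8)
The plan is to split the statement into two independent parts: the explicit form of $A$ with its domain, and the kernel formula. For the first part I would simply specialise the general formula \eqref{explicit_formula_A} to the Gaussian density $\mu(x)=\gamma e^{-|x|^2/2}$, for which $\frac{\nabla\mu}{\mu}=-x$, $\frac{D^2\mu}{\mu}=x\otimes x-I$, $\frac{\Delta\mu}{\mu}=|x|^2-N$, and $\frac{\nabla(\Delta\mu)}{\mu}=2x|x|^2-(|x|^2-N)x-\dots$; collecting terms yields $Af=\Delta^2 f-2x\cdot\nabla(\Delta f)+{\rm Tr}(x\otimes x\,D^2f)-2\Delta f+x\cdot\nabla f$. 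The identity $D(A)=H^4_\mu(\R^N)$ for every $N\ge1$ cannot be quoted directly from Theorem \ref{th:characterization-2} (which needs $N\ge7$), so here I would instead use the fact, recalled at the start of Section \ref{bi-OU}, that for the Ornstein--Uhlenbeck operator $D(L)=H^2_\mu(\R^N)$ holds with no dimensional restriction (cf. \cite{dap,Lunardi}); then, arguing exactly as in the proof of Theorem \ref{th:characterization-2} — where one shows $A=L^2$ via the $C_0$-group generated by $iL$ (\cite[Corollary 3.7.15]{ABHN}) and then that $Lu\in H^2_\mu$ iff $u\in H^4_\mu$ using the explicit polynomial coefficients $D_k(\tfrac{\nabla\mu}{\mu})=-e_k$, $D_{hk}(\tfrac{\nabla\mu}{\mu})=0$ so that the commutator terms \eqref{eq:dom-derive1}--\eqref{eq:dom-derive2} are trivially controlled — one gets $D(A)=H^4_\mu(\R^N)$ for all $N$.

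For the kernel formula, the starting point is the classical Mehler kernel $p(t,x,y)=(2\pi(1-e^{-2t}))^{-N/2}\exp\!\big(-\tfrac{|e^{-t}x-y|^2}{2(1-e^{-2t})}\big)$ (written in the probabilist normalisation $L=\Delta-x\cdot\nabla$), so that $e^{tL}f(x)=\int p(t,x,y)f(y)\,dy$. Since $A=L^2$, we have $e^{-tA}=e^{-tL^2}$, and the subordination identity for the heat semigroup gives, for any generator $B$ of a bounded $C_0$-group, $e^{-tB^2}=(4\pi t)^{-1/2}\int_{\R}e^{-s^2/(4t)}e^{isB}\,ds$. Applying this with $B=iL$ (legitimate because $iL$ generates a $C_0$-group, as used above) yields $e^{-tA}=(4\pi t)^{-1/2}\int_{\R}e^{-s^2/(4t)}e^{-sL}\,ds$; splitting the integral over $(-\infty,0)$ and $(0,\infty)$ and writing $e^{-sL}$ as integration against the analytic continuation $p(\mp is,x,y)$ of the Mehler kernel produces the first displayed expression $k(t,x,y)=(4\pi t)^{-1/2}\int_0^\infty e^{-s^2/(4t)}(p(is,x,y)+p(-is,x,y))\,ds$.

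The remaining work is the explicit evaluation of $p(\pm is,x,y)$ and the rescaling $s\mapsto s\sqrt t$. Substituting $t=is$ into the Mehler formula gives $1-e^{-2is}=1-\cos 2s+i\sin 2s=2\sin s\,(\sin s+i\cos s)=2i\sin s\,e^{-is}$, hence $(1-e^{-2is})^{-N/2}$ contributes a factor $(\sin s)^{-N/2}$ together with a phase $e^{i\frac{N}{2}(s-\pi/2)}$ (from $(i e^{-is})^{-N/2}$, using $i=e^{i\pi/2}$), and the Gaussian exponent becomes $-\tfrac{|e^{-is}x-y|^2}{2\cdot 2i\sin s\,e^{-is}}$. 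One then separates this complex exponent into its real part — which after simplification (using $|e^{-is}x-y|^2$ and $\frac{1}{4i\sin s\,e^{-is}}=\frac{e^{is}}{4i\sin s}=\frac{\cos s+i\sin s}{4i\sin s}$) gives the real Gaussian $e^{-|e^{-is}x-y|^2/8}$ — and its imaginary part, which combines with the phase from the prefactor into the cosine $\cos\big(\tfrac{N}{2}(s-\pi/2)+\tfrac{|e^{-is}x-y|^2}{8\tan s}\big)$; adding the $p(is)$ and $p(-is)$ terms (complex conjugates of each other) doubles the real part and collapses everything to $2\,\mathrm{Re}$, which accounts for the cosine. Carrying out the constant bookkeeping — $\gamma$-factors, the $(4\pi t)^{-1/2}$, the rescaling Jacobian $\sqrt t\,ds$, and a $\sqrt 2$ from $2\,\mathrm{Re}$ — yields the stated $\sqrt 2\,(8\pi)^{-(N+1)/2}$ prefactor.

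The main obstacle is this last bookkeeping step: tracking the complex square root $(1-e^{-2is})^{-N/2}$ along the correct branch (so that the phase is $e^{i\frac{N}{2}(s-\pi/2)}$ and not its conjugate or a sign-flipped version) and then correctly peeling off the real versus oscillatory parts of the Gaussian exponent. Equally delicate is justifying that the analytic continuation $t\mapsto p(t,x,y)$ of the Mehler kernel genuinely represents the bounded operators $e^{\mp isL}$ on $L^2_\mu$ and that the subordination integral converges in the operator (or at least strong) sense — for this one invokes the analyticity of $(e^{tL})_{t\ge0}$ of angle $\pi/2$ together with the uniform bound \eqref{eq:analytic}, exactly as in the proof of Theorem \ref{th:characterization-2}, so the interchange of integral and operator is legitimate and the resulting $k(t,x,y)$ is, despite its oscillatory cosine, a genuine (sign-changing) integral kernel for $e^{-tA}$.
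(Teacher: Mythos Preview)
Your overall strategy coincides with the paper's on both parts: for the domain, the paper also bypasses Theorem~\ref{th:characterization-2} and works directly from $D(L)=H^2_\mu(\R^N)$ (valid for all $N\ge1$) together with the commutator identities $D_kLu=LD_ku-D_ku$ and $D_{hk}Lu=LD_{hk}u-2D_{hk}u$; for the kernel, the paper likewise invokes the boundary group $(T(is))_{s\in\R}$ and the subordination formula from \cite[Corollary~3.7.15]{ABHN}, and then just says the explicit expression follows from the formula for $p(it,x,y)$ without spelling out the real/imaginary splitting that you sketch.

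There is, however, a sign slip in your subordination step that you should repair. The identity $e^{-tB^2}=(4\pi t)^{-1/2}\int_\R e^{-s^2/(4t)}e^{isB}\,ds$ is correct for self-adjoint $B$ (so that $(e^{isB})_{s\in\R}$ is the unitary group), and should be applied with $B=L$: this gives $e^{-tA}=e^{-tL^2}=(4\pi t)^{-1/2}\int_\R e^{-s^2/(4t)}e^{isL}\,ds$, and then $e^{isL}=T(is)$ has kernel $p(is,x,y)$, yielding the stated formula after folding the integral to $(0,\infty)$. Substituting $B=iL$ as you do produces $e^{-t(iL)^2}=e^{tL^2}=e^{tA}$ on the left and $e^{is(iL)}=e^{-sL}$ on the right; but $e^{-sL}$ is not a bounded operator for $s>0$ (the OU semigroup does not extend to negative real times), so the integral you write is ill-defined, and its kernel would in any case be $p(-s,x,y)$ rather than $p(\mp is,x,y)$. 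You nonetheless land on the correct $p(is)+p(-is)$, so the error is bookkeeping rather than conceptual --- just take $B=L$. A second point: your Mehler kernel carries the normalization $(2\pi(1-e^{-2t}))^{-N/2}\exp\bigl(-|e^{-t}x-y|^2/(2(1-e^{-2t}))\bigr)$, whereas the paper's proof uses $4\pi$ and $4(1-e^{-2t})$; the factors of $8$ in the final formula come from the latter convention, so make sure you carry one normalization consistently through the constant bookkeeping.
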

\begin{proof}
For the characterization of $D(A)$ we recall that $D(L)=H^2_\mu(\R^N)$ and hence
$$D(A)=\{u\in H_\mu^2(\R^N);\,Lu\in H_\mu^2(\R^N)\}.$$
An easy computation shows that
$ D_{i} \left( \frac{\nabla \mu}{\mu} \right)=-e_i$ and
$D_{ij} \left( \frac{\nabla \mu}{\mu} \right)=0$, where $e_i$ is the $i-th$ canonical vector of $\R^N$.

Thus, by \eqref{eq:dom-derive1} and \eqref{eq:dom-derive2}, we have 
$$
D_{k}Lu=LD_{k}u- D_ku
\hbox{\ and }
D_{hk}Lu=LD_{hk}u -2D_{hk}u
$$
for any $u\in H_\mu^4(\R^N)$.
So, $D_{k}Lu$ and $D_{hk}Lu$ belong to $L^2_\mu(\R^N)$ for all $h,\,k\in \{1,2,\ldots ,N\}$. Thus $Lu\in H_\mu^2(\R^N)$ and hence $H_\mu^4(\R^N)\subset D(A)$. 

For the other inclusion, since  $D(L)=H^2_\mu(\R^N)$, we have to prove that $D_{hk}u\in D(L)$ for any $h,k=1\,\ldots ,N$ whenever $u\in D(A)$. To this purpose let us consider $u\in D(A)$. As in the proof of Theorem \ref{th:characterization-2}, integrating by parts, we get
$$
\int_{\R^N}\nabla(D_{hk}u)\cdot \nabla \varphi \,d\mu = -\int_{\R^N}\left[D_{hk}Lu +2D_{hk}u\right] \varphi\,d\mu 
$$
for all $\varphi\in C_c^\infty(\R^N)$. 
So to conclude we  just have to observe that $D_{hk}Lu+2D_{hk}u\in L_\mu^2(\R^N)$.

For the last statement we recall (see \cite[Chapter 9]{ber-lor}, \cite{Lunardi}, \cite{MPRS}) that $L$ generates the analytic $C_0$-semigroup $T(t)$ of angle $\frac{\pi}{2}$ given by 
$$T(t)f(x)=(4\pi (1-e^{-2t}))^{-\frac{N}{2}}\int_{\R^N}e^{-\frac{|e^{-t}x-y|^2}{4(1-e^{-2t})}}f(y)\,dy,\quad t>0,\,x\in \R^N.$$
Moreover, it follows from \cite[Theorem 9.3.25]{ber-lor} that 
$$\|T(z)\|_{\mathcal{L}(L^2_\mu(\R^N))}\le 1,\quad \forall z\in \mathbb{C} \hbox{\ with }\Re z\ge 0.$$
Thus, applying \cite[Proposition 3.9.1 and Remark 3.9.3]{ABHN}, we deduce that $iL$ generates a $C_0$-group on $L^2_\mu(\R^N)$ given by
\begin{eqnarray*}
T(it)f(x) &=& \lim_{\varepsilon \to 0^+}(4\pi (1-e^{-2(\varepsilon +it)}))^{-\frac{N}{2}}\int_{\R^N}e^{-\frac{|e^{-(\varepsilon +it)}x-y|^2}{4(1-e^{-2(\varepsilon +it)})}}f(y)\,dy\\
&=& \int_{\R^N}p(it,x,y)f(y)\,dy,\quad t\in \R\setminus \pi \mathbb{Z} ,\,x\in \R^N , 
\end{eqnarray*}
where
\begin{equation}\label{eq:last-formule}
p(it,x,y)=
(4\pi (1-e^{-2it}))^{-\frac{N}{2}}e^{-\frac{|e^{-it}x-y|^2}{4(1-e^{-2it})}}, \quad t\in \R \setminus \pi\mathbb{Z},\,x,y\in \R^N.
\end{equation}
Thus, by \cite[Corollary 3.7.15]{ABHN},
the semigroup $(e^{-tA})$ generated by the bi-Ornstein-Uhlenbeck operator $-A=(iL)^2$ on $L_\mu^2 (\R^N)$  is given by
$$e^{-tA}f(x)=(4\pi t)^{-\frac{1}{2}} \int_0^\infty e^{-\frac{s^2}{4t}}(T(is)+T(-is))f(x)\,ds,\quad x\in \R^N,\,t>0,\,f\in L_\mu^2(\R^N).$$
Thus, $(e^{-tA})$ is given by a kernel $k$ with
$$
k(t,x,y) =(4\pi t)^{-\frac{1}{2}}\int_0^\infty e^{-\frac{s^2}{4t}}(p(is,x,y)+p(-is,x,y))\,ds
$$
for $t>0$ and $x,y\in \R^N$. So, the explicit formula of $k$ follows from \eqref{eq:last-formule}. This ends the proof of the theorem.

\end{proof}

To conclude the section we provide some examples of measures giving rise to a singular drift that satisfy our standing assumptions.

\begin{example}
\begin{enumerate}
\item 
Let us consider the density 
\begin{align*}
\mu(x)=C\frac{1+|x|^\alpha}{1+|x|^\beta}, \quad x\in \R^N,
\end{align*}
where $\alpha,\beta>0$, $\beta>\alpha+N$ and $C$ is a normalizing factor. These conditions  imply that $\mu\in L^1(\R^N)$ and $\mu(dx):=\mu(x)dx$ is a probability measure. Let us compute the gradient of $\mu$: we have
\begin{align*}
\nabla \mu 
= & C\alpha\frac{|x|^{\alpha-2}x}{1+|x|^\beta}-C\beta\frac{(1+|x|^\alpha)|x|^{\beta-2}x}{(1+|x|^\beta)^2}, \quad x\in \R^N\setminus\{0\},
\end{align*}
and
\begin{align*}
\Delta \mu
= &C\alpha\frac{(\alpha-2+N)|x|^{\alpha-2}}{1+|x|^\beta}
-2C\alpha\beta\frac{|x|^{\alpha+\beta-2}}{(1+|x|^\beta)^2} 
-C\beta(1+|x|^\alpha)\frac{(\beta-2+N)|x|^{\beta-2}}{(1+|x|^\beta)^2} \\
&+2C\beta^2(1+|x|^\alpha)\frac{|x|^{2\beta-2}}{(1+|x|^\beta)^3}, \quad x\in \R^N\setminus\{0\}.
\end{align*}
It follows that
\begin{align*}
\frac{\nabla \mu}{\mu} =\alpha\frac{|x|^{\alpha-2}x}{1+|x|^\alpha}-\beta\frac{|x|^{\beta-2}x}{1+|x|^\beta}, \quad x\in \R^N\setminus\{0\},
\end{align*}
and
\begin{align*}
\left|\frac{\Delta \mu}{\mu} \right|
\sim |\alpha(\alpha +N-2)||x|^{\alpha-2}, \quad |x|\rightarrow 0.
\end{align*}
Hence, assumptions $(H1)$ and $(H2)$ are verified. The associated Kolmogorov type operator is
\[Lu=\Delta u+\left(\alpha\frac{|x|^{\alpha-2}}{1+|x|^\alpha}-\beta\frac{|x|^{\beta-2}}{1+|x|^\beta}\right)x\cdot\nabla u .\]
Let us now prove $(H3)$. We notice that there exist $R_1,C>0$ such that
\begin{align}
\label{stima_example_1}
\left|\frac{\nabla \mu}{\mu} \right|\geq C\left({|x|^{\alpha-1}}\chi_{B_{R_1}}(x)+\frac{|x|^{\beta-1}}{1+|x|^\beta}\chi_{B^c_{R_1}}(x)\right), \quad x\in\R^N\setminus \{0\}.
\end{align}
It is not hard to see that there exist $K_0,C_0>0$ such that for any $i,j=1,\ldots,N$ we have
\begin{align}
\label{stima_example_2}
\left|D_i\left(\frac{\nabla \mu}{\mu}\right) \right|
& \leq C_0|x|^{\alpha-2}\chi_{B_{R_1}}(x)+K_0\frac{1}{|x|^2}\chi_{B^c_{R_1}}(x), \quad x\in \R^N\setminus\{0\},\\
\left|D_{ij}\left(\frac{\nabla \mu}{\mu}\right) \right|
& \leq C_0|x|^{\alpha-3}\chi_{B_{R_1}}(x)+K_0\frac{1 }{|x|^3}\chi_{B^c_{R_1}}(x), \quad x\in \R^N\setminus\{0\}.
\label{stima_example_3}
\end{align}
From \eqref{stima_example_1}, \eqref{stima_example_2} and \eqref{stima_example_3} it follows that also $(H3)$ is satisfied.

\item Consider $\mu=\kappa e^{-|x|^m}$, where $m>0$ and $\kappa$ is a normalising factor. 
By simple computations one has $\mu\in H^1_{\rm loc}(\R^N)$ and
\[\frac{\nabla\mu}{\mu}=-m|x|^{m-2}x,\qquad x\in\R^N\setminus\{0\},\]
so that $\mu$ satisfies Hypothesis $(H1)$. Then, the associated Kolmogorov type operator $L$ is
\[Lu=\Delta u-m|x|^{m-2}x\cdot\nabla u.\]
Furthermore, $\Delta\mu\in L^1_{\rm loc}(\R^N)$ and 
\[U=-\frac14 m^2|x|^{2m-2}+\frac12 m(N+m-2)|x|^{m-2},\qquad x\in\R^N\setminus\{0\}.\]
Hence, Hypothesis $(H2)$ is satisfied. 
Finally,
\begin{align*}\left|D_i\left(\frac{D_j\mu}{\mu}\right)\right|&\leq C|x|^{m-2},\qquad x\in\R^N\setminus\{0\},\\
\left|D_{ij}\left(\frac{D_k\mu}{\mu}\right)\right|&\leq C_1|x|^{m-3},\qquad x\in\R^N\setminus\{0\}.
\end{align*}
Therefore, also Hypothesis $(H3)$ is fulfilled. 

As a consequence, also the measure in Example \ref{ex:measure} satisfies $(H1),(H2)$ and $(H3)$.

\end{enumerate}
\end{example}

\providecommand{\bysame}{\leavevmode\hbox to3em{\hrulefill}\thinspace}
\providecommand{\MR}{\relax\ifhmode\unskip\space\fi MR }
\providecommand{\MRhref}[2]{%
  \href{http://www.ams.org/mathscinet-getitem?mr=#1}{#2}
}
\providecommand{\href}[2]{#2}


\end{document}